\documentclass[runningheads,envcountsame,numbook]{svjour3}
\usepackage[width=12.1cm, left=4.5cm, height=20.75cm] {geometry}
\usepackage{cite}
\usepackage{amsmath}
\usepackage{amssymb}
\usepackage{mathptmx}
\usepackage{enumerate}
\usepackage{hyperref}
\usepackage[colorinlistoftodos,disable]{todonotes}
%\usepackage[right]{showlabels}
%\usepackage{lineno}
%\linenumbers
\usepackage[weather,alpine,misc]{ifsym}
\smartqed  % flush right qed marks, e.g. at end of proof

%%%%%%%%%%%%%%%%%%%%%%%%%%%%%%%%%%%%%%%%%%%%%%%%%%%%%%%%%%%%%%%%%%%%%%%%%
% Macros
%%%%%%%%%%%%%%%%%%%%%%%%%%%%%%%%%%%%%%%%%%%%%%%%%%%%%%%%%%%%%%%%%%%%%%%%%

%%%%BVB 08:08/16

\newcommand{\eps}{\varepsilon}
\newcommand{\bx}{\bar x}
\newcommand{\by}{\bar y}

\newcommand{\iv}{^{-1} }

\newcommand {\R} {\mathbb R}
\newcommand {\N} {\mathbb N}

\newcommand {\Sp} {\mathbb S}
%distance
\newcommand {\gph} {{\rm gph}\,}%Graph
\newcommand {\dom} {{\rm dom}\,}
\newcommand {\epi} {{\rm epi}\,}

\newcommand {\co} {{\rm co}\,}

\newcommand {\Limsup} {\mathop{{\rm Lim\,sup}\,}}
\newcommand {\sd} {\partial}
%\renewcommand {\int} {{\rm int}\,}

% geht nur mit amssymb.sty
\renewcommand{\iff}{$ \Leftrightarrow\ $}%iff

%%%%62..62:03/28

\def\nbh{neighbourhood}
\def\es{\emptyset}

\def\RHS{right-hand side}
\def\SVM{set-valued mapping}

%%%%1025

\newcommand{\norm}[1]{\left\Vert#1\right\Vert}

%%%%Me134 37:0716
\newcommand{\blue}[1]{\textcolor{blue}{#1}}
\newcommand{\cyan}[1]{\textcolor{cyan}{#1}}
%%%%Me142 17:0710
\newcommand{\magenta}[1]{\textcolor{magenta}{#1}}
\newcommand{\red}[1]{\textcolor{red}{#1}}
\newcommand{\yellow}[1]{\textcolor{yellow}{#1}}

%%%%Me711915
\newcommand{\ang}[1]{\left\langle #1 \right\rangle}
\newcommand{\qdtx}[1]{\quad\mbox{#1}\quad}

\newcounter{mycount}

%%%%467:0515
\newcommand{\AK}[1]{\todo[inline]{AK {#1}}}
%%%%BB:9864
%%%%BB58:0819
\newcommand{\rg}{{\rm rg}[F](\bx,\by)}

\newcommand{\tr}{{\rm tr}[A,B](\bx)}

%%%%BW78:0520

%\newcommand{\be}{\begin{equation}}
%\newcommand{\ee}{\end{equation}}
%%%%392:0515

%\renewcommand{\thesection}{\arabic{section}.}
\makeatletter
\makeatother

\def\downto{{\raise 1pt \hbox{$\scriptstyle \,\searrow\,$}}}
\def\for{\hskip0.9pt|\hskip0.9pt}
\def\reg{\mathop{\rm reg}\nolimits}
\def\subreg{\mathop{\rm subreg}\nolimits}
\def\lip{\mathop{\rm lip}\nolimits}

\newcommand{\AD}[1]{\todo[inline,color=green!40]{AD {#1}}}
\def\rad{{\rm rad[SR]}}
\newcommand{\rgo}{{\rm rg}^\circ[F](\bx,\by)}
%AK30/03/18
%\newcommand{\rgh}{\widehat{\rm rg}[F](\bx,\by)}
%\newcommand{\rgd}{{\rm rg}^\diamond[F](\bx,\by)}
%\newcommand{\rgdag}{{\rm rg}^\dag[F](\bx,\by)}
\newcommand{\rgd}{\underline{\rm rg}^\circ[F](\bx,\by)}
\newcommand{\rgh}{\overline{\rm rg}[F](\bx,\by)}
\newcommand{\rgdag}{\overline{\rm rg}^\circ[F](\bx,\by)}

%\def\lip{\rm lip}

%%% macros added by HG
% macros necessary to compile the original file
%\newcommand{\AK}[1]{\todo[inline]{AK {#1}}}
%\newcommand{\tcr}[1]{\red {#1}}
%\newcommand{\es}{\emptyset}
% end macros necessary to compile the original file

\renewcommand{\tr}{{\rm tr}}
\newcommand{\HG}[1]{\todo[inline,color=cyan!40]{HG {#1}}}
%%% end  macros added by HG

\title{The Radius of Metric Subregularity
\thanks{Supported by the National Science Foundation (NSF) grant 156229; the Austrian
Science Fund (FWF) grants P26640-N25, P26132-N25 and P29190-N32; the Australian Research Council (ARC) grant DP160100854 and the Grant Agency of the Czech Republic (GACR) grants 17-04301S and 17-08182S.}
}

\author{
Asen~L.~Dontchev
\and
Helmut Gfrerer
\and
Alexander~Y. Kruger
\and
Ji{\v{r}}\'i~V. Outrata
}

\institute{
A.~L.~Dontchev \at
Department of Aerospace Engineering, The University of Michigan,
Ann Arbor, MI\\
\email{dontchev@umich.edu}
\and
H. Gfrerer \at
Institute of Computational Mathematics, Johannes Kepler University Linz, A-4040 Linz, Austria\\
\email{helmut.gfrerer@jku.at}
\and
A. Y.~Kruger (\Letter\,) \at
Centre for Informatics and Applied Optimization, Federation University Australia, PO Box 663 Ballarat VIC 3353, Australia\\
\email{a.kruger@federation.edu.au}
\and
J.~V. Outrata \at
Institute of Information Theory and Automation, Czech Academy of Science, Pod Vod\'arenskou
v\v{e}\v{z}\'i 4, 182 08 Prague, Czech Republic
\email{outrata@utia.cas.cz}
}
\dedication{Dedicated to Professor Alexander Ioffe on the occasion of his 80$^{th}$ birthday
}
\date{Received: date / Accepted: date}

\journalname{}

\vfuzz2pt % Don't report over-full v-boxes if over-edge is small
\hfuzz6pt % Don't report over-full h-boxes if over-edge is small

\begin{document}

\maketitle

\begin{abstract}
There is a basic paradigm,
called here the \emph{radius of well-posedness}, which quantifies the ``distance" from a given
well-posed problem to the set of ill-posed problems of the same kind. In variational analysis,
well-posedness is often understood as a regularity property, which is usually employed to
 measure the effect of perturbations and  approximations of a problem  on its solutions.
In this paper we focus on evaluating the radius of the property of metric subregularity which, in contrast to its siblings, metric regularity, strong regularity and strong subregularity, exhibits a more complicated  behavior under various perturbations. We consider three kinds of perturbations: by Lipschitz continuous functions, by semismooth functions, and by smooth functions, obtaining different expressions/bounds for the radius of subregularity, which involve generalized derivatives of set-valued mappings. We also obtain different expressions when using either Frobenius or Euclidean norm to measure the radius. As an application, we evaluate the radius of subregularity of a general constraint system. Examples illustrate the theoretical findings.
\end{abstract}

\keywords{well-posedness \and metric subregularity \and generalized differentiation \and radius theorems \and constraint system}

\subclass{49J52 \and 49J53 \and 49K40 \and 90C31}

\AK{6/06/18.
Jiri's suggestions incorporated.
Lemmas 14 and 15 added.
They are referred to in the proof of Proposition 16.
}

\AK{26/05/18.
Helmut's suggestions (partially) incorporated.

1. Our main derivative-like object is now called \emph{primal-dual derivative}; see Subsection 2.3.

2. The statement of Proposition~\ref{P1} amended.

3. Proposition~\ref{P4} added.
It has allowed to remove $\widehat{\mathfrak{D}}^\diamond F(\bx,\by)$.
The proof of Proposition~\ref{P3} amended accordingly.

4. Overline and underline signs are used in the definitions of the auxiliary regularity constants.
I keep the original notations for the `main' constants $\rg$ and $\rgo$.

5. The statement of Proposition~\ref{P3} amended.

Besides, a paragraph added in the Introduction with the references to most recent (since 2015) publications on metric subregularity, including the paper by Mar\'echal suggested by Asen.
Should the list be shortened?
}
\AK{19/05/18.
1. Jiri's notes incorporated.

2. A description of the structure os the paper added at the end of the Introduction.

3. Line numbering added to simplify communication.}

\AK{18/05/18.
Asen's notes incorporated.
In particular, Lemma~4 removed.
In some cases, Asen's or my comments are included in the text.
}

\AK{15/04/18.

1) Following the recent instance of confusion with the notation, $\widehat{\mathcal{D}}F(\bx,\by)$ has been changed to $\widehat{\mathfrak{D}}F(\bx,\by)$.
I still want to keep some form of $D$ in the notation as it is about derivatives.
The definition \eqref{DF2} of $\widehat{\mathfrak{D}}^\circ F(\bx,\by)$ has been changed to make it also an image set like \eqref{DF} and \eqref{DF1}.
This raises a question about the relationship between \eqref{DF1} and \eqref{DF2}, which includes possible `reversing' of Lemma~4 -- see the question on page~11.

\blue{AK 17/05/18.
As suggested by Asen, Lemma~4 has been removed.
The assertion is now a part of Proposition~4, and the question is now on page 13.}

2) Helmut's example of the absence of robustness of the (strengthened) subregularity property has been added as another subsection in Section 5.

3) Several subsection titles have been added to improve the structure.

4) The two Asen's examples in the Introduction have been made Examples~\ref{E1} and \ref{E2} and are now cited in the main text.

5) The statement of the main radius theorem has been reversed to the previous version with the additional estimates moved to Corollary 8.

\blue{AK 17/05/18.
Now Corollary 7.}

6) The concluding part of Subsection 3.1 has been expanded with some new comments added.

7) The statement of Proposition 10 has been slightly amended to improve its readability.

8) Section 5 expanded.
}

\AK{7/04/18.

1) The issues picked up by Jiri are fixed.

2) Section 5 (Example) appended by a discussion of ways of computing regularity constants by solving optimization problems, based on Jiri's handwritten notes.
}

\AK{30/03/18.

1) Amendments are made to Section 5 (Example) following comments from Jiri.

2) The regularity constants are subdivided into two groups: the main two: $\rg$ and $\rgo$, and modifications of $\rg$.
The modifications are now denoted in a uniform way: $\rgd$, $\rgh$ and $\rgdag$.

3) Amendments are made in the Introduction, explaining why the conclusions of the radius theorem fall into the general radius pattern.
See also the updated Remark 6.

4) $\rgo$ is added in the radius theorem as another upper bound for $\rad_{Lip}F(\bx\for\by)$.
}

\AK{25/03/18.

1) Section 5 (Example) has been rewritten using the computations performed by Jiri.
All regularity constants have been computed.

2) To prepare the ground for the changes in Section 5, computing constant $\rgh$ has been added to all statements in Section 4.
}
\AK{13/03/18.

1) Following Jiri's advice (in a Skype session), two clarifications have been added to (my version of) Helmut's proof, currently at the end of pages 12 and 13.

2) Again based on Jiri's handwritten notes, the issue with the signs in Proposition~\ref{P2} has been fixed.
The statement looks nice again.
The issue was caused by my incorrect application of \cite[formula (2.4)]{GfrOut16.2}; I apologize for that.
}
\AK{10/03/18.

The new addition to the radius theorem by Helmut looks great.
I've done some revision of the proof, but apart from fixing a few typos, it is not really an improvement of the proof; it is more like my attempt to understand its logic `on the go'.
A few things are left in red for checking and approval.

Since the proof of the theorem grew longer, I've added a bit of a structure by inserting underlined headings in the beginning of each part of the proof.

Following the email from Jiri, I've made a few changes to the signs in the proof to make the conclusion more straightforward.
These changes don't affect the conclusion.
Neither they affect the issue with the signs in Section~4.
It looks like there was a mistake in the version DKO7 and earlier ones.
}
\HG{4/03/18.
1) Upper bound $\rgh$ for $\rad_{Lip}$ added in the radius theorem: It is built by $\norm{u^\ast}_*+\norm{v}$ and compared with the lower bound $\max\{\norm{u^\ast}_*,\norm{v}\}$ the difference is a factor less than or equal to 2. The result looks beautiful but the proof takes nearly 2 pages.
2) Some comments added.
}
\AK{4/03/18.

1) Most of the `colours' removed, assuming that the corresponding earlier changes have been (silently) approved by everybody.
A few `colourful' places still remain where I do want to attract attention and get comments.
The new changes are highlighted in red as always.

2) The table of contents added temporarily on page 4 to see the overall structure of the paper `at a glance'.

3) Several subsection headings added, possibly also temporarily, to better structure the paper.
We can either remove them later or, on the contrary, add more subsections.

4) The pieces about `derivatives' and `regularity constants' moved from various locations in the main section into Preliminaries and expanded.
This seems to be the most questionable part.
Could be expanded further by including the definitions of the critical set(s) and directional coderivatives and appropriate discussions.

5) Signs changed in some definitions in subsection 2.2 for consistency.
I am again a bit unsure if everything is still correct.
This might have consequences throughout the paper.

6) Following Jiri's suggestion another `regularity' constant \eqref{rgdag} introduced corresponding to the Euclidean norm.

7) Section 4 updated. There seems to be a problem with the signs there.
}

\AK{1/03/18.

1) A few amendments to the Introduction and Preliminaries.

2) The definitions of the radii for several classes of perturbations are merged into a single formula.

3) Amendments to the presentation in Section 3.

4) Corollary 8 added.

\blue{AK 17/05/18.
As requested by Asen, Corollary 8 has been removed.}
}

\AD{27.02.18. Asen's  updates in cyan}

\AK{22/02/18.

I have made some amendments.

1) Lemma~\ref{L2} shortened: the Clarke Jacobean has been moved to the proof of the main theorem.

2) New mixed primal-dual derivative-like object introduced together with the corresponding `regularity' constants.
The definitions and notations need to be checked and discussed.

3) The two radius theorems are merged into a single one.

4) Several facts from the previous version are formulated as separate statements: Proposition~3 and Lemma~4.

\blue{AK 17/05/18.
As suggested by Asen, Lemma~4 has been removed.
The assertion is now a part of Proposition~4.}

5) Propositions~5 and 6 amended to comply with the definition of the new derivative-like and regularity objects.

I have not touched the rest of the paper.}

\HG{10/02/18.

The main changes were done in section 2. Ji\v{r}i and I observed that the radius theorem is not correct when taking the infimum of $\norm{B}$ over $\partial_C h(\bx)$: One has to take the supremum. But $\lip(h;\bx)=\sup\{\norm{B}\mid B\in \partial_Ch(\bx)\}$ and $\lip(h;\bx)=\norm{\nabla h(\bx)}$ in the $C^1$ case and therefore we used $\lip(h;\bx)$ in the definitions of the different radii.

We now consider three different classes of perturbations: Lipschitzian (abbreviated by $Lip$, semismooth Lipschitzian ($ss$) and smooth perturbations ($C^1$). A lower bound for the radius of metric subregularity in the Lipschitzian case is added and this lower bound is compared with the radii for metric regularity (abbreviated MR) and strong metric subregularity (SSR).

We further tried to eliminate the matrix $B$ from the computations in the semismooth (smooth) case. The resulting formulas are easier and allow a comparison with the Lipschitzian case but the prize we have to pay for is that the resulting formula is not exact and provides only lower and upper bounds which differ by a factor of at most $\sqrt{2}$ (so these bounds are rather sharp).

In the current version we allow any norms in $\R^m$ and $\R^n$ and, in fact, for every pair of norms we get different radii. The obtained results are only correct if we use for the dual objects the corresponding dual norms. This is not explicitly mentioned and we also use the notation $\R^m$ and $\R^n$ for the dual spaces which, in my opinion, is very confusing. Maybe it would be better to use only the Euclidean norm, however this would mean that we cannot use the wonderful ideas of Alexander for computing the radius in the example with respect to the maximum norm.
}

\AK{7/02/18.

The recent contributions from Jiri and Asen implemented.
Lemma~\ref{L2} added.
The proof of Theorem~\ref{ThRadLip} is now based on Lemma~\ref{L2}.}

\AK{5/02/18.

The recent contributions from Jiri implemented.
Sections 3 and 4 swapped.}

\AK{25/01/18.

The recent contributions from Jiri and Asen implemented.

A short section added based on Jiri's notes.
It needs to be checked.

Besides this last section, a few other things (highlighted in red) in the preceding sections need to be checked and confirmed, especially the new statement of Theorem~\ref{ThRadLip} and related definitions and notations.}

\AK{15/01/18.

The recent contributions from Jiri and Asen implemented.

1) The former Theorem 1 (Radius theorem for (strong) metric (sub)regularity) removed. Why?

2) The main Theorem~\ref{ThRadLip} reformulated using two families of perturbations and special notations for the radii.
%(Should we write `radii'?)
%\blue{AK25/01/18: Changed to radii everywhere.}
}

\todo[inline]{AK 21/01/17.

Environment prepared for the convenience of the joint work on the source file.
This should also simplify applying journal styles in the future.

1) Page size -- A4.

2) Standard page parameters (margins) given by package {\tt fullpage}.

\blue{AK 14/01/18.
{\tt fullpage} is replaced by more realistic {\tt geometry}.}

3) Standard maths notations and theorem environments given by the AMS packages {\tt amsmath} and {\tt amsthm}.

4) Asen's  definitions replaced by mine.
They are in two separate files {\tt macro.tex} and {\tt macrothm.tex} (not for editing!) which should be placed in the same folder with this source file.
A few Asen's definitions which are used in the current draft and not covered by mine are now in the separate file {\tt macro-AD.tex} (feel free to edit) which should also be placed in the same folder.

The only conflict between the two sets of definitions is with the macro {\tt $\backslash$iff}.
I have replaced all cases of {\tt $\backslash$iff} in the manuscript by ` \iff'.
Please avoid using {\tt $\backslash$iff}.

5) The bibliography is now created automatically by BibTeX from my databases (mostly based on MathSciNet) which I am happy to share if you wish.
Otherwise, just do not run BibTeX.
LaTeX will do everything right as long as the file {\tt BBL} is in the same directory.
When adding new references, I suggest you keep using the standard {\tt thebibliography} environment as previously.
I will then merge the two lists of bibliography.

6) When making changes in the source file, please identify them with colours using appropriate `colour commands' \red{\tt $\backslash$red}, \blue{\tt $\backslash$blue}, \magenta{\tt $\backslash$magenta}, \cyan{\tt $\backslash$cyan} or \yellow{\tt $\backslash$yellow}.
Choose your favourite.
\medskip

I have made a few minor changes in the text in general (like e.g. the numbering of the formulas and theorems) and in the proof of Theorem~\ref{ThRadLip}.

The major change is removing the `hat' from the notation of the Fr\'echet normal cone.
}

%\setcounter{tocdepth}{2}
%\tableofcontents

\section{Introduction}\label{intro}

According to the classical definition of  Hadamard, a mathematical problem is well-posed when it has a unique solution which  is a continuous function of the data of the problem.
Establishing the well-posedness is a basic task, but there are other questions around it such as how ``robust" the well-posedness property is under perturbations, or how  ``far" from a given well-posed problem the ill-posed
problems are.
The formalization of the  latter question leads to the concept of the {\em radius of well-posedness},  which quantifies the distance from a given
well-posed problem to the set of ill-posed problems of the same kind.

To be specific, consider the problem of solving
 the linear equation $Ax=b$, where $A$ is an $n\times n$ matrix and $b\in\R^n$. This problem is well-posed in the sense of Hadamard exactly when the matrix $A$ is nonsingular. The radius
of well-posedness of this problem is well known, thanks to  the \emph{Eckart--Young theorem} \cite{EckYou36}, which says the following:
for any nonsingular $n \times n$ matrix $A$,
\begin{gather}\label{ek}
\inf_{B \in L(\R^n,\R^n)}\{\|B\| \mid A+B \mbox{ singular}\}
= \frac{1}{\|A^{-1}\|},
\end{gather}
where $L(\R^n,\R^m)$ denotes the set of $n\times m$ matrices, and $\|\cdot\|$ is the usual operator norm.
In numerical linear algebra this
theorem  is intimately  connected with the  \emph{conditioning} of the matrix $A$.
Namely, the expression on the right-hand side
of \eqref{ek} is the reciprocal of the \emph{absolute condition number} of $A$; dividing by
$\|A\|$ would give us a similar expression for the \emph{relative condition number}.
Thus, the radius equality \eqref{ek} is in line  with the  idea of conditioning; the farther a matrix is from the set of singular matrices, the
better its conditioning is.
The reader can find a  broad coverage of the mathematics around condition numbers and conditioning in the  monograph \cite{BurCuc13}.

\if{
\AD{sentence deleted: The ``good behavior" of a problem is more generally understood as a regularity property which describes desirable features of the solutions.}
}\fi
A far reaching generalization of the Eckart--Young theorem was proved in \cite{DonLewRoc03} for the property of \emph{metric regularity} of a set-valued mapping $F$ acting generally between metric spaces, which is the same as  nonsingularity when $F$ is a square matrix.
This generalization was later extended in \cite{DonRoc04} to the properties of \emph{strong metric regularity} and
\emph{strong metric subregularity}, see also \cite[Section~6A]{DonRoc14}.
In this paper we deal with the radius of \emph{metric subregularity}, a property which turns out to be quite different from its siblings.
%AK26/02/18
%mentioned above.

We proceed
%AK28/02/18
%here
now
with the definitions of these properties; more details regarding  the notation and the definitions used   in the paper  are given in Section~\ref{prelim}.

%AK26/02/18
%Recall that
A set-valued mapping $F$ acting from $\R^n$ to $\R^m$  is said to be  {\it metrically regular\/} at $\bx$ for $\by$  if $(\bx,\by)\in\gph F$
\if{
\AD{deleted: , the graph of $F$ is locally closed around $(\bx, \by)$,}
}\fi
and there exists a number $\kappa \in [0,+\infty)$ together  with
neighborhoods $U$ of $\bx$ and $V$ of $\by$ such that
\begin{gather}\label{mr}
d(x, F^{-1}(y)) \,\leq\, \kappa d(y, F(x))
                 \mbox{ for all } x\in U,\;y\in V.
\end{gather}
Here $d(x, C)$ is the distance from a point $x$
%AK26/02/18
%\in \R^n$
to a set $C
%\subset \R^n
$: $d(x, C) = \inf_{y \in C}\|x-y\|$.
The infimum of the set of values $\kappa $ for which \eqref{mr} holds is called the
{\it modulus of metric regularity\/}, denoted by $\reg(F;\bx\for\by)$.
%The absence of metric regularity is signaled by $\reg(F;\bx\for\by)= +\infty$.
\if{
\AD{deleted A linear and bounded mapping acting between Banach spaces is metrically regular if and only if it is surjective.}
}\fi
A mapping $F$ is metrically regular at $\bx$ for $\by$ if and only if its inverse $F^{-1}$ has the \emph{Aubin property} at $\by$ for $\bx$,
a property which in the single-valued case reduces to the Lipschits continuity.

\if{
\AD{deleted:; that is,  there exist  neighborhood  $U$ of $\bx$ and $V$ of $\by$ and
a constant $\kappa$ such that $$
F^{-1}(y) \cap U \subset F^{-1}(y') + \kappa \|y-y'\| \quad \mbox{for all } y, y' \in V.
$$
 The infimum of the  values $\kappa$ for which \eqref{aub} holds equals $\,\reg (F;\bx\for\by)$.}
}\fi

A mapping  $F:\R^n\rightrightarrows \R^m$ with $(\bx,\by) \in \gph F$ is said to have a \emph{single-valued  localization} around $\bx$ for $\by$ if there exist neighborhoods
$U$ of $\bx$ and $V$ of $\by$ such that the  truncated
mapping $U\ni x \mapsto F(x)\cap V$ is single-valued, a  function on $U$.

If the inverse
$F^{-1}$ of a mapping $F$  has a localization at $\by$ for $\bx$ which is Lipschitz continuous, then   $F$ is said to be {\em strongly metrically regular},
or simply \emph{strongly regular}; in this case $F$ is automatically metrically regular at $\bx$ for $\by$ and the Lipschitz modulus of the localization at $\by$ equals $\reg (F;\bx\for\by)$.

If we fix $y$ in \eqref{mr} at its reference value $\by$, we obtain the property of {\em metric subregularity}, which we sometimes call simply \emph{subregularity}. Specifically,
a mapping $F:\R^n \rightrightarrows \R^m$  is said to be metrically subregular at $\bx$
for $\by$ if $(\bx,\by)\in\gph F$ and
there exists a number $\kappa \in [0,+\infty)$ together  with a
neighborhood $U$ of $\bx$ such that
\begin{gather}\label{msr}
d(x, F^{-1}(\by)) \,\leq\, \kappa d(\by, F(x))
                 \mbox{ for all } x\in U.
\end{gather}
The infimum of the set of values $\kappa$ for which \eqref{msr} holds is called the
{\it modulus of metric subregularity\/}, denoted by $\subreg (F;\bx\for\by)$.
%The absence of metric subregularity is signaled by $\subreg(F;\bx\for\by)=+\infty$.
A mapping $F$ is metrically subregular at $\bx$ for $\by$ if and only if its inverse $F^{-1}$ is \emph{calm} at $\by$ for $\bx$,
a property which corresponds to the Aubin continuity with one of the variables fixed.

\if{
\AD{deleted:
In general, a set-valued mapping $S:\R^m \rightrightarrows \R^n$ is said to be calm at $\by$ for $\bx$ whenever $(\by,\bx) \in \gph S$ and there exist a neighborhood  $U$ of $\bx$ and
a constant $\kappa$ such that
$$
S(y) \cap U \subset S(\by) + \kappa \|y-\by\| \quad \mbox{for all } y \in \R^m.
$$
}}\fi

A mapping $F$ is said to be {\em strongly metrically subregular}, or simply \emph{strongly subregular} at $\bx$ for $\by$ if $F$ is metrically subregular at $\bx$ for $\by$
and in addition  $\bx$ is an isolated point in $F^{-1}(\by)\cap U$.
In this case, $F^{-1}$ has the {\em isolated  calmness} property at $\by$ for $\bx$.

%AK28/02/18
%When $f$ is a function, we write, with some abuse of notation, $\reg (f;\bx) = \lip (f^{-1}; f(\bx))$.
If $f$ is a (single-valued) function, we write, with some abuse of notation, $\reg(f;\bx)$ and $\subreg(f;\bx)$ instead of $\reg(f;\bx\for f(\bx))$ and $\subreg(f;\bx\for f(\bx))$, respectively.

Clearly, the above definitions of  regularity properties can be extended in a straightforward manner to general metric spaces.

All the above concepts
%AK26/05/18
have been well studied.
They
are discussed in detail in \cite{RocWet98,KlaKum02,Mor06.1,DonRoc14,Iof17}.
The metric subregularity, which is the main object of study in the current paper, is implicitly present  already in the pioneering work by Graves \cite{Gra50}, as shown in \cite[Section~5D]{DonRoc14}.
This property plays a major role in deriving
the Lagrange multiplier rule in its various forms, see e.g. \cite[Section~2.1]{KlaKum02}.
For the most recent developments in research on metric subregularity, we refer the readers to \cite{Kru15,Kru15.2,NgaPha15, DurStr16,MarCor16,Ude16,Zhe16,ZheZhu16, Iof17,NgaTroTin17, CibDonKru18,Mar18}.

%In this paper we deal with the following question: given a mapping $F$ with one of the properties described above, how much could one perturb the mapping before it looses its regularity property. Following \cite{DonLewRoc03},
%we call the associated quantity {\em the radius of regularity.}
%AK28/02/18
%If a mapping does not possess a certain regularity property at the reference point, we will adopt the natural convention that the corresponding radius equals 0.

It turns out that the Eckard--Young equality \eqref{ek} is a  special case of a general paradigm
which can be described as
\begin{gather}\label{radreg}
{\rm rad} = \frac{1}{\reg},
\end{gather}
where rad is the appropriately defined radius of the considered regularity property, and $\reg$ is the  modulus
of this property.
This paradigm was first established in
%AK28/02/18
%the paper
\cite{DonLewRoc03} for the property of metric regularity.
Specifically, it was established that if a mapping $F:\R^n \rightrightarrows \R^m$ is metrically regular at $\bx$ for $\by$, then
% we have
\begin{multline}\label{rad}
{\rm rad[MR]}F(\bx\for\by):=\inf_{B\in L(\R^n,\R^m)} \big\{\|B\| \mid
\\
F+B\mbox{ is not metrically regular at $\bx$ for }  \by+B\bx \big\}
= \frac{1}{\reg(F; \bx\for\by)}.
\end{multline}
Moreover, the
%AK26/02/18
% infimum on the left side of \eqref{rad} is unchanged if taken with respect to matrices  $B$ of rank one, but also remains unchanged when
equality remains true if the infimum is taken with respect to all matrices $B$ of rank one, or
the class of perturbations is enlarged to the
%set
family
of functions $h: \R^n\to\R^m$ that are Lipschitz continuous around $\bx$, with $\|B\|$ replaced by the Lipschitz modulus $\lip(h;\bx)$.
That is, the radius of metric regularity is the same for all perturbations $h$ ranging from Lipschitz continuous functions to linear mappings of rank one.

Subsequently, in \cite{DonRoc04} this  radius equality was shown to hold in the same form for the properties of strong regularity and strong metric subregularity. Specifically,   if a mapping $F:\R^n \rightrightarrows \R^m$ is strongly  regular or strongly subregular  at $\bx$ for $\by$, respectively, then  the equality \eqref{rad} holds with ``not metrically regular" replaced  by ``not strongly regular" or ``not strongly subregular", respectively, and in the second case $\reg(F; \bx\for\by)$ on the right side is replaced by $\subreg(F; \bx\for\by)$.

%\AD{ deleted: \begin{multline}\label{subrad}
%{\rm rad[SSR]}F(\bx\for\by):=\inf_{h \in {\cal L}(\bx)} \big\{ \lip (h;\bx) \mid   \\  F+h
%\mbox{is not strongly  subregular at $\bx$ for}  \by+h(\bx)\big\}
%= \frac{1}{\subreg(F; \bx\for\by)}.
%\end{multline} }

%AK30/03/18
In some situations it is more convenient to work with the reciprocal of the regularity modulus reg.  We denote this reciprocal by rg and then equality \eqref{radreg} becomes
\begin{gather}\label{radreg+}
{\rm rad} = {\rm rg}.
\end{gather}
%with the reciprocal rg $=\reg\iv$
%AK4/06/18
%playing the role of the `modulus' of
%\red{substituting the modulus when testing}
%the respective regularity property.
In the case of the conventional metric regularity, rg corresponds to the {\em modulus of surjection} `sur' used by Ioffe \cite{Iof17}; see also other examples in \cite{Kru15,KruTha15,KruLukTha17}.
This notation is in agreement with the natural convention, which we adopt here, that if a mapping does not possess a certain regularity property,  then the regularity modulus equals $+\infty$ and  the corresponding radius equals 0.

It turns out, however, that
the (not strong) metric subregularity does not obey the radius paradigm,
at least
in the form \eqref{radreg}
%AK30/03/18
{or \eqref{radreg+}}.
This effect  was first noted in \cite{DonRoc04} and also discussed in \cite[Section~6A]{DonRoc14}.

\if{
\AD{deleted: In order to fully present the motivation behind the present paper, we briefly
describe  the situation on an example.
 First, recall that calmness, as defined in \eqref{cm},  is a local version of the so-called \emph{outer Lipschitz continuity} at $\by$,  which, for a mapping $S:\R^m \rightrightarrows \R^n$ with $(\by, \bx) \in \gph S$ is defined as in
\eqref{cm}
with $U = \R^n$.
}}\fi

%AK14/04/18
\begin{example}\label{E1}
%AK28/02/18
%Indeed,
%Consider
%first
%for instance,
%the polyhedral case.
By
%AK16/05/18 I am not sure
\todo{I would put `the'}
%the
{a}
fundamental result of Robinson \cite{Rob81}, every polyhedral mapping, that is, a mapping whose graph is the union of finitely many polyhedral convex sets, is outer Lipschitz continuous around every point in its domain.
Hence, inasmuch  outer Lipschitz continuity of the inverse implies metric subregularity,
%AK28/02/18
%we obtain that
every polyhedral mapping $F$  is metrically subregular at any  $\bx$ for any $\by$ such that $(\bx,\by)\in\gph F$.
It is elementary to observe that the sum of any polyhedral mapping and a linear mapping is again polyhedral.
Hence,
%AK26/02/18
%we have that,
if $F:\R^n\rightrightarrows\R^m$ is a polyhedral mapping and $(\bx,\by)\in\gph F$,
%is any point in its graph,
then
\begin{gather}\label{subrad}
%{\rm rad[
%AK26/02/18
%SMR
%\red{SR}
%]}\cyan{F(\bx\for\by)}:=
\inf_{B \in L(\R^n, \R^m)} \big\{\|B\| \mid
F+B\mbox{ is not metrically subregular at $\bx$ for }  \by+B\bx \big\}
= +\infty.
\end{gather}
Clearly, the quantity $\subreg(F;\bx\for\by)$ could be anything; thus the equality \eqref{radreg}  does not hold in general
%AK28/02/18
%(for the corresponding quantities) if linear perturbations are considered.
for polyhedral mappings.
\qed\end{example}

%AK15/04/18
\begin{example}\label{E2}
%As a further example,
Consider the zero function $f:\R\to\R$, that is $f(x) = 0$ for all $x\in\R$.
Then $f^{-1}(0) = \R$ and
$f^{-1}(y) = \emptyset$ for all $y \neq 0$.
Thus, the zero  mapping is metrically subregular at any $\bx$ for $0$, and the subregularity modulus is of course zero.
The function $h(x) = x^2$ is  Lipschitz continuous around $\bx = 0$ with Lipschitz modulus zero, but the mapping
$(f+ h)(x) = x^2$ is not metrically subregular at $0$ for $0$. Hence, the radius of metric subregularity of the zero mapping
%AK26/02/18
with respect to smooth perturbations
is zero, but this does not fall into the pattern of \eqref{radreg}.
Also note that the zero function is a polyhedral mapping, hence, in the light of the preceding example, its radius for linear perturbations is $+\infty$, while when we change to quadratic perturbations and use the Lipschitz modulus to measure the radius, it becomes zero.
\qed\end{example}

\if{\AK{3/03/18.
The above two examples to be formulated as numbered Examples and commented on in the text.}}\fi

Note that there are four components  involved in a radius equality \eqref{rad}: a regularity property, the basic underlying mapping $F$, the mapping $B$ representing the perturbations, and the ``size" of the perturbation, which in this case is measured by the norm of $B$. In this paper we consider the metric subregularity property, {for which} the basic mapping $F$ will be a set-valued mapping with closed graph. The perturbations will be represented by the following three classes of functions: Lipschitz continuous functions, semismooth functions and continuously differentiable ($C^1$) functions, all around/at the reference point. For all the three classes we will use
 the Lipschitz modulus at the reference point  as a measure of  the size of the perturbation. Note that for the second class the Lipschitz modulus can be expressed in terms of Clarke's generalized Jacobian, while for $C^1$ functions this would be the norm of the derivative at the reference point.

\if{
\AD{delete As an example, let $F$ be the linear function $F(x)= x$ acting from $\R$ to $\R$. If the perturbation
$h$ is another {\em linear} function, then the radius of subregularity will be $+\infty$. More generally, if $F$ is a polyhedral mapping and $h$ is a linear function then, by the   result by Robinson cited in preceding lines, the radius of subregularity will also be $+\infty$.
If, however, the perturbation is allowed to be a nonlinear smooth  function,
then the radius will be different.
Indeed, consider the preceding one-dimensional example $F(x) = x$ and
suppose that $h$ is a $C^1$ function, where the radius is measured by the norm of the derivative at $0$.
That is, we would like to find the infimum of $|h'(0)|$ over all smooth functions at $0$ such that $x + h(x)$ is not subregular.
It is not difficult to see that the radius is 1 and is attained for $h(x) = -x + x^2$.
Note that the \eqref{radreg} equality holds
%AK7/02/18
%for
in
this example
(as well as for $F(x) = \kappa x$
%AK7/02/18
%for
with
any real $\kappa$).}
}\fi

\if{
\AK{26/02/18.
The considerations below are partially duplicated in Proposition~\ref{P7}.}

Observe that, since both metric regularity and strong subregularity imply metric subregularity,
from \eqref{rad} and \eqref{subrad} we obtain the following lower bounds.
If $F$ is metrically regular at $\bx$ for $\by$, then
\begin{multline}\label{subrad1}
\rad F(\bx\for\by):=\inf_{h \in
\red{{\cal L}(\bx)}} \big\{ \lip (h;\bx) \mid
\\
F+h\mbox{ is not subregular at $\bx$ for }  \by+h(\bx) \big\}\geq  \frac{1}{\reg(F; \bx\for\by)},
\end{multline}
and if $F$ is strongly subregular  at $\bx$ for $\by$, then
\begin{gather}\label{subrad2}
\rad F(\bx\for\by)\geq  \frac{1}{\subreg(F; \bx\for\by)}.
\end{gather}
Since strong regularity implies both   metric regularity and strong subregularity, and also
$\subreg(F; \bx\for\by) \leq \reg(F; \bx\for\by)$, from \eqref{subrad1} and \eqref{subrad2}
 we obtain the following:  if $F$ is strongly regular  at $\bx$ for $\by$, then
%AK23/01/18
%$$
%\inf_{h \in {\cal L}(\bx)} \left\{ \lip (h;\bx) \mid F+h
%\mbox{is not  subregular at $\bx$ for  $\by+h(\bx)$} \right\}
%\geq  \frac{1}{\subreg(F; \bx\for\by)}.  $$
inequality \eqref{subrad2} holds.
}\fi

\if{In this paper, we
%AK30/03/18
%are going to
establish a radius formula of the type \eqref{radreg+} and certain radius estimates for several classes of perturbations for the subregularity property, using (sub)regularity constants (moduli), different from $\subreg(F;\bx\for\by)$.
%AK16/05/18
Such a constant being strictly positive ensures a stronger version of subregularity (which is still in general weaker than the conventional strong subregularity).}\fi
\AK{16/05/18.
I am keeping the last paragraph for now.

I think we have unearthed a new regularity property which can become important also outside of the radius stuff.
Just leaving a bridge to possible future work.}

\if{\AK{30/03/18.
Is it possible to show that in the case of strong subregularity all our constants are strictly positive?
Are they going to reduce to (the reciprocal of) $\subreg(F;\bx\for\by)$?}}\fi

The next Section~\ref{prelim} provides some preliminary material used throughout the paper.
This includes basic notation and general conventions, definitions of the three classes of perturbations studied in the paper and corresponding radii, and a certain new primal-dual derivative which gives rise to a collection of `regularity constants' used in the radius estimates.
In Section~\ref{main}, we establish
%the main result of the paper -- the radius theorem.
%It provides
lower and upper bounds for the radius of metric subregularity for Lipschitzian perturbations and the exact radius formula for the other classes of perturbations.
%Some simplifications of the formulas involved in the radius theorem are given for the case of the Euclidean norm.
%They employ a
The case when the size of the perturbation is measured by the
Frobenius norm on the space of matrices
is also discussed.
Section~\ref{S4} is devoted to applications to constraint systems,
%AK27/06/18
{while the last Section~\ref{S5} identifies possible directions for future research.}

{\section{Preliminaries}\label{prelim}}

\if{
\AD{the entire section is new but also uses some material defined previously}
}\fi

\subsection{Notation and general conventions}

Throughout we consider mappings acting
between finite dimensional spaces $\R^n$ and $\R^m$.
The spaces are assumed equipped with arbitrary norms denoted by the same symbol $\|\cdot\|$.
We usually keep the same notation for the duals of $\R^n$ and $\R^m$.
However, in some situations when this can cause confusion, we write explicitly $(\R^n)^*$ and $(\R^m)^*$.
The corresponding dual norms are denoted $\|\cdot\|_*$.
%AK16/05/18
%\red{$m\times n$ matrices are identified with linear operators acting from $\R^n$ to $\R^m$.
%Unless explicitly stated otherwise, the operator norm is used on the space of such matrices: $\norm{B}:=\sup\{\norm{Bx}\mid \norm{x}=1\}$.}
%\AK{16/05/18.
%Why deleting this?
%We have already had some confusion about what matrix norm is used.
%Some readers may be confused too.}
Given an $m\times n$ matrix $B$, the symbol $B^T$ stands for the transposed matrix,
and both $B$ and $B^T$  are identified with the corresponding linear operators acting between  $\R^m$ and $\R^n$ or their duals.
%(identified with the corresponding linear operator acting between the duals of $\R^m$ and $\R^n$).
%AK26/02/18
%We use the operator norm for matrices.

We denote by $F: \R^n \rightrightarrows \R^m$ a {\em
set-valued} mapping acting from  $\R^n$ to the subsets of $\R^m$.
If $F$ is a
function, that is, for each $x\in\R^n$ the set of values $F(x)$
consists of no more than one element, then we use a small letter $f$ and write $f:\R^n\to\R^m$.
The graph of a mapping $F$ is defined as $\gph F:= \{ (x,y)\in \R^n\times \R^m
\mid y \in F(x) \}$ and its domain is $\dom F:= \{x \in \R^n \mid F(x)
\neq \emptyset\}$.
The  inverse of a mapping $F$  is the mapping $y\mapsto  F^{-1}(y):=\{x\in\R^m\mid y\in F(x)\}.$
In this paper we consider mappings with {\em closed graph}.

The Lipschitz modulus of a function $f:\R^n\to\R^m$ around a point $\bx$ is defined by
$$
\lip (f;\bx):= \limsup_{\substack{x,x'\to\bx,\,x\neq x'}}
     \frac{\|f(x)-f(x')\|}{ \|x-x'\|}.
$$
Having $\lip(f;\bx)<l$ corresponds to having a neighborhood $U$ of $\bx$
such that $f$ is Lipschitz continuous on $U$ with Lipschitz constant $l$.
Conversely, if $f$ is Lipschitz continuous around $\bx$ with Lipschitz
constant $l$ then we have $\lip(f;\bx) \leq l$. If $f$ is not Lipschitz
continuous around $\bx$ then $\lip(f;\bx)=+\infty$.
%We denote by  ${\cal L}(\bx)$ the space of  functions $f:\R^n \to \R^m$ that are Lipschitz continuous  around $\bx$,

\if{
\AD{13/01/18. delete: We consider a mapping
$F:\R^n \rightrightarrows \R^m$
and a point $(\bx,\by)\in\gph F$.}}\fi
\if{\AD{
Insert all preparatory material including material
from the papers \cite{GfrOut16.2}, \cite{Gfr11}, definitions of $\overline{D}^*$,
$ {\rm Cr}_o F(\bx, \by)$, of semismoothness, etc...

{\bf Shall we do that? Helmut, Jiri, this is your  turf. }
}}\fi

\if{
 \AD{delete: Our aim  in this section is to establish
%AK11/02/18
\red{estimates for}
the radius $\rad F(\bx\for\by)$ of metric subregularity of $F$ at $\bx$ for $\by$.
As discussed in Section~\ref{intro}, we need to decide first on the families of legitimate additive perturbations of $F$ near $\bx$.
%AK11/02/18
\red{For simplicity we consider only perturbation functions $h:\R^n \to \R^m$ with $h(\bx) = 0$.}
Three
%Two
families of perturbations are considered below:}
}\fi

%Throughout the sequel, we will employ the following standard notions from variational analysis, see [RoWe98] and [Mor06]. \\
Given a closed set $A \subset\mathbb{R}^{n} $ and a point $\bar{x}\in A$, we define
\begin{enumerate}
\item
the {\em  tangent (Bouligand) cone} to $A$ at $\bar{x}$:
\[
T_{A}(\bar{x}):= \{u \in \mathbb{R}^{n}\mid \exists u_{i}\rightarrow u,\;t_{i}\searrow 0
\qdtx{such that}
\bar{x}+t_{i}u_{i}\in A,\; \forall i\in\N\};
\]
\item
the {\em Fr\'{e}chet normal cone} to $A$ at $\bar{x}$ as the (negative) \emph{polar} cone to $T_{A}(\bar{x})$:
\[
N_{A}(\bar{x}):=(T_{A}(\bar{x}))^{\circ} =\{x^*\in\R^n\mid \ang{x^*,u}\le0
\qdtx{for all}
u\in T_{A}(\bar{x})\};
\]
\item
the {\em limiting normal cone} to $A$ at $\bar{x}$:
\[
\overline{N}_{A}(\bar{x}):=\{x^*\in \mathbb{R}^{n}\mid \exists x_{i} \stackrel{A}{\rightarrow} \bar{x},\; x^*_{i}\rightarrow x^*
\qdtx{such that}
x^*_{i}\in{N}_{A}(x_{i}),\;\forall i\in\N\}.
\]
\end{enumerate}
If $\bx\notin A$, we use the convention that the three cones above are empty.

Given an extended-real-valued function $f:\R^n\to\R\cup\{+\infty\}$ and a point $\bx\in\dom f$, its \emph{limiting subdifferential} at $\bx$ can be defined by
\[\overline{\sd}f(\bx):=\{x^*\in\R^n\mid (x^*,-1)\in\overline{N}_{\epi f}(\bx,f(\bx))\},
\]
where $\epi f:=\{(x,\mu)\in\R^n\times\R\mid f(x)\le\mu\}$ is the \emph{epigraph} of $f$.
Given a function $f:\R^n\to\R^m$, Lipschitz continuous around a point $\bx\in\R^n$,
its \emph{Clarke generalized Jacobian} at $\bx$ is defined by
\[{\sd}_Cf(\bx) :=\co\left\{\lim_{k\to+\infty}\nabla f(x_k)\mid x_k\to\bx,\;f\mbox{ is differentiable at }x_k\right\},
\]
where $\co$ stands for the \emph{convex hull}.

Given a \SVM\ $F: \mathbb{R}^{n}\rightrightarrows \mathbb{R}^{m}$ and a point $(\bx,\by)\in \gph F$, the cones defined above give rise to the following {\em generalized derivatives}:

\begin{enumerate}
\item
the set-valued mapping $DF(\bx,\by):\mathbb{R}^{n} \rightrightarrows\mathbb{R}^m$, defined by
\[
DF(\bx,\by)(u):= \{v\in \mathbb{R}^m\mid (u,v)\in T_{\gph F}(\bx,\by)\},\quad u \in \mathbb{R}^{n},
\]
is called the {\em graphical derivative} of $F$ at $(\bx,\by)$;
\item
the set-valued mapping $D^{*}F(\bx,\by): \mathbb{R}^m\rightrightarrows\mathbb{R}^{n}$, defined by
\[
D^{*}F(\bx,\by)(v^{*}):=\{u^{*}\in \mathbb{R}^{n} \mid (u^{*},- v^{*})\in{N}_{\gph F}(\bx,\by)\},\quad v^{*}\in \mathbb{R}^m,
\]
is called the {\em Fr\'{e}chet coderivative} of $F$ at $(\bx,\by)$.
\item
the set-valued mapping $\overline{D}^{*}F(\bx,\by): \mathbb{R}^m\rightrightarrows\mathbb{R}^{n}$, defined by
\[
\overline{D}^{*}F(\bx,\by)(v^{*}):=\{u^{*}\in \mathbb{R}^{n} \mid (u^{*},- v^{*})\in \overline{N}_{\gph F}(\bx,\by)\},\quad v^{*}\in \mathbb{R}^m,
\]
is called the {\em limiting coderivative} of $F$ at $(\bx,\by)$.
\end{enumerate}

Recently, a finer, directionally dependent notion of a limiting normal cone has been introduced, cf. \cite{GinMor11,Gfr11,Gfr13}.
In addition to a set $A$ and a point $\bar{x}\in A$, one specifies also a direction $u\in \mathbb{R}^{n}$.
The cone
$$
\overline{N}_{A}(\bar{x};u):= \{x^*\in \mathbb{R}^{n}\mid\exists t_{i}\searrow 0, u_{i}\rightarrow u,x^*_{i}\rightarrow x^*
\mbox{ such that }
x^*_{i}\in N_{A}(\bar{x}+t_{i}u_{i}),\forall i\in\N\}
$$
is then called the {\em directional limiting normal cone} to $A$ at $\bar{x}$ in the direction $u$.

It is easy to see that $\overline{N}_{A}(\bar{x};u)= \emptyset$ when $u \not\in T_{A}(\bar{x})$ and
\begin{equation}\label{eq-100}
\overline{N}_{A}(\bar{x})= \bigcup_{\|u\|=1}\overline{N}_{A}(\bar{x};u)\cup N_{A}(\bar{x}).
\end{equation}
Relation (\ref{eq-100}) plays an important role in various conditions relaxing the standard criteria (sufficient conditions) for various Lipschitzian properties of set-valued mappings; see, e.g., \cite{GfrOut16.2,Gfr11}.

A set $A$ is called \emph{directionally regular} \cite{YeZho18} at $\bar{x}\in A$ in the direction $u$ if
$$
\overline{N}_{A}(\bar{x};u)= \{x^*\in \mathbb{R}^{n}\mid \forall t_{i}\searrow 0, \exists u_{i}\rightarrow u,x^*_{i}\rightarrow x^*
\mbox{ such that }
x^*_{i}\in N_{A}(\bar{x}+t_{i}u_{i}),\forall i\in\N\},
$$
%AK6/06/18
and simply \emph{directionally regular} at $\bar{x}$ if it is directionally regular at $\bar{x}$ in all directions.

Given a \SVM\ $F$, a point $(\bx,\by)\in\gph F$ and a pair of directions $(u,v)\in \mathbb{R}^{n}\times\mathbb{R}^m$, the set-valued mapping
$\overline{D}^{*}F((\bx,\by);(u,v)): \mathbb{R}^m\rightrightarrows \mathbb{R}^{n}$, defined by
\[
%%\begin{equation}\label{eq-8}
\overline{D}^{*}F((\bx,\by);(u,v))(v^{*}) :=\{u^{*}\in\mathbb{R}^{n}\mid(u^{*},-v^{*})\in \overline{N}_{\gph F} ((\bx,\by);(u,v))\},\quad v^{*}\in \mathbb{R}^m,
%%\end{equation}
\]
is called the {\em directional limiting coderivative of $F$} at $(\bx,\by)$ in the direction $(u,v)$.

With $F$ and $(\bx,\by)$ as above, the {\em limit set, critical for metric subregularity},
denoted by ${\rm Cr_0}F(\bx,\by)$, is the collection of all elements $(v,u^{*})\in\mathbb{R}^{m}\times\mathbb{R}^{n}$
such that there are sequences
$t_{i}\searrow0$, $(u_{i}),(u^{*}_{i})\subset\R^n$, $(v_{i}),(v^{*}_{i})\subset\mathbb{R}^{m}$ with $v_{i}\rightarrow v$,
$u^{*}_{i}\rightarrow u^{*}$,
\[
(-u^{*}_{i},v^{*}_{i})\in N_{\gph F}(\bar{u}+t_{i}u_{i}, \bar{v}+t_{i}v_{i})
\qdtx{and}
\|u\|=\|v^*\|_*=1.
\]
As proved in \cite[Theorem~3.2]{Gfr11}, the condition $(0,0)\notin{\rm Cr_0}F(\bx,\by)$ is sufficient for metric subregularity of $F$ at $\bar{u}$ for $\bar{v}$.

In our analysis we make use also of a generalization of the semismoothness property, introduced by Mifflin in \cite{Mif77}.
A function $f:\mathbb{R}^{n}\to\mathbb{R}^{m}$ is (weakly) {\em semismooth} at $\bar{x}$, provided it is Lipschitz continuous around $\bar{x}$ and the limit
\begin{equation}\label{eq-101}
\lim\{ V u^{\prime}\mid
V\in{\partial}_Cf(\bar{x}+t u^{\prime}),\;
u^{\prime}\rightarrow u,\; t \searrow 0\}
\end{equation}
exists for all $u \in \mathbb{R}^{n}$;
here
$\partial_Cf$ stands
%in (\ref{eq-101})
for the \emph{Clarke generalized Jacobian} of $f$.
It is easy to verify that this property implies directional differentiability of $f$ at $\bar{x}$ and limit (\ref{eq-101}) amounts to $f^{\prime}(\bar{x};u)$ (the \emph{Hadamard directional derivative} of $f$ at $\bar{x}$ in the direction $u$).

\subsection{Classes of perturbations and definitions of the radii}

As
%AK26/02/18
%mentioned
discussed
in Section~\ref{intro}, the radius of subregularity
%will depend
depends
on the choice of the class of functions that are used as perturbations.
We consider three such classes: Lipschitz
%AK7/04/18
{continuous, semismooth}
%Lipschitz
and $C^1$ functions.
%Specifically, we consider the following classes:}
%and linear (affine).
\begin{align*}
\mathcal{F}_{Lip}&:=\{h:\R^n \to \R^m\mid h \mbox{ is Lipschitz continuous around }\bx\},
\\
\mathcal{F}_{ss}&:=\{h:\R^n \to \R^m\mid h \mbox{ is %AK7/04/18
%Lipschitz continuous around}\bx,
%\\
%&\hspace{4.2cm}\mbox{
semismooth at }\bx\},
\\
\mathcal{F}_{C^1}&:=\{h:\R^n \to \R^m\mid h \mbox{ is } C^1 \mbox{ around }\bx\}.
\end{align*}

\if{
\AD{Semismoothness requires Lipschitz continuity around the point, right?

\red{AK26/02/18: According to Dontchev-Rockafellar, p.~411, the answer is yes.
%AK7/04/18
%Shall we say this explicitly and remove the Lipschitz continuity from the second definition above?
}}
}\fi

Without loss of generality,
%AK26/02/18
%in order  to simplify the notation, we restrict these classes to the case when
we will assume that perturbation functions $h$ in all three definitions satisfy
$h(\bx)=0$.
%AK26/02/18
%Unlike the cases of (strong) metric regularity and strong subregularity, as illustrated by
%AK14/04/18
%the example in Section~\ref{intro},
%{Example~\ref{E1}},
%linear perturbations do not seem appropriate when determining the radius of subregularity.
\AD{26/04/18.
I am not sure this is correct. Example~\ref{E1} is only for polyhedral mappings.}
\AK{16/05/18.
I think it is OK: if linear perturbations do not work in the polyhedral case, what can we expect in more general settings?
Besides, the phrase is sufficiently vague: `do not seem appropriate'...}

\if{\AK{14/01/18.
It would be great to allow for the third one:}
\magenta{\begin{align*}
\mathcal{F}_{Lin}&:=\{h:\R^n \to \R^m\mid h(x)=B(x-\bx)\; (x\in\R^n),\;
B\in\R^{m\times n}\}.
\end{align*}
\AK{
I have tried and failed.
This seems to be due to the specifics of metric subregularity compared to its stronger siblings.}
}\fi

%AK26/02/18
The corresponding radii are defined as follows:
\begin{align*}
\rad_{\mathcal{P}}F(\bx\for\by) :=\inf_{h\in\mathcal{F}_{\mathcal{P}}}\{\lip(h;\bx) &\mid F+h \mbox{ is not metrically subregular at $\bx$ for } \by\},
\if{\\
\rad_{Lip}F(\bx\for\by) :=\inf_{h\in\mathcal{F}_{Lip}}\{\lip(h;\bx) &\mid F+h \mbox{ is not metrically subregular at $\bx$ for } \by\},
\\
\rad_{ss}F(\bx\for\by) :=\inf_{h\in\mathcal{F}_{ss}}\{\lip(h;\bx) &\mid F+h \mbox{ is not metrically subregular at $\bx$ for } \by\},
\\
\rad_{C^1}F(\bx\for\by) :=\inf_{\substack{h\in\mathcal{F}_{C^1}}}\{\lip(h;\bx) &\mid F+h \mbox{ is not metrically subregular at $\bx$ for } \by\}.}\fi
%\\
%\rad_{Lin}F(\bx\for\by) &:=\inf_{\substack{h(x)=B(x-\bx)}}\{\|B\| \mid F+h \mbox{is not metrically subregular at $\bx$ for} \by\}.
\end{align*}
where $\mathcal{P}$ stands for $Lip$, $ss$ or $C^1$.
Note that, for every $h\in \mathcal{F}_{Lip}$,
%AK28/02/18
in view of \cite[Theorem~9.62]{RocWet98}
it holds
$$\lip(h;\bx)=\sup\{\norm{B}\mid B\in \partial_Ch(\bx)\},$$
where $\partial_Ch(x)$ stands for the Clarke generalized Jacobian of $h$ at $x$.

%30/03/18
%The above definition of the radii remains valid when $F$ is not subregular at $\bx$ for $\by$, in which case by convention the subregularity radius with respect to any class of perturbations equals 0: in this case one can take $h=0$ in the \RHS\ of the definition.
{
If $\rad_{\mathcal{P}}F(\bx\for\by)>0$, then $F$ is necessarily subregular at $\bx$ for $\by$ since $0\in\mathcal{F}_{\mathcal{P}}$ whenever $\mathcal{P}$ stands for any of the three classes considered in this paper.
In the degenerate case when $F$ is not subregular at $\bx$ for $\by$, the above definition of the radius automatically gives $\rad_{\mathcal{P}}F(\bx\for\by)=0$.
}
%AK11/02/18
%Observe that under the assumption $h(\bx) = 0$ the radius $\rad_{Lip}F(\bx\for\by)$ coincides with $\rad F(\bx\for\by)$ defined in \eqref{subrad1}.
%When it does not cause confusion we will keep writing $\rad F(\bx\for\by)$ (without the subscript) in the Lipschitz case.

We obviously have
$
%\mathcal{F}_{Lin}\subset
\mathcal{F}_{C^1}\subset \mathcal{F}_{ss}\subset\mathcal{F}_{Lip}$ and
\begin{gather}\label{8}
\rad_{Lip}F(\bx\for\by)\le\rad_{ss}F(\bx\for\by) \le\rad_{C^1}F(\bx\for\by).
%\le\rad_{Lin}F(\bx\for\by).
\end{gather}

%AK25/05/18
\subsection{Primal-dual derivative and regularity constants}

%AK12/02/18

Given $(\bx,\by)\in\gph F$, we define
%a \red{\underline{mixed primal-dual derivative-like object}}
{the \emph{primal-dual derivative}
${\widehat{D}F(\bx,\by)}:\R^n\times(\R^m)^* \allowbreak \rightrightarrows(\R^n)^*\times\R^m$
of $F$ at $(\bx,\by)$}
as follows: for all $(u,v^*)\in\R^n\times(\R^m)^*$,
\if{\AD{26/04/18.
I do not like the name ``derivative-like object". Does not sound enough English to me ---
why not just derivative?}
\AK{16/05/18.
Neither do I, and not only in terms of English.
That is why it is still in red.
I want to find a good name, which other people will accept.
If we do not invent one quickly, we will have to adopt `primal-dual derivative'.
If this object proves useful, other people will come up with a better name for it, and everybody will cite them.}
\HG{23/05/18.
Maybe the following geometric considerations are useful to find a better name: As Alex remarked in some earlier version, the mapping $\widehat{D}F(\bx,\by)$ is some combination of the graphical derivative and the limiting coderivative. In fact, as stated in Proposition 3  we always have
$\widehat{D}F(\bx,\by)(u,v^*)\subset  \overline{D}^{*}F(\bx,\by)(v^*)\times DF(\bx,\by)(u)$
and thus  tangents (related to the graphical derivative) are linked with limiting normals (related to the coderivative) to the graph of $F$ in a suitable way, i.e. we do not consider the graphical derivative and coderivative as independent objects. Hence a possible name could be ''combined graphical derivative/coderivative'' but this does not sound good for me.}
}\fi
\begin{gather}\label{pdd}
\widehat{D}F(\bx,\by)(u,v^*):=  \bigl\{(u^*,v)\in(\R^n)^*\times\R^m\mid
(u^*,-v^*)\in\overline{N}_{\gph F} ((\bx,\by);(u,v))\bigr\}.
\end{gather}
In other words,
\begin{gather*}%\label{pdd}
\widehat{D}F(\bx,\by)(u,v^*)=  \bigl\{(u^*,v)\in(\R^n)^*\times\R^m\mid
u^*\in\overline{D}^{*}F((\bx,\by);(u,v))(v^{*})\bigr\}.
\end{gather*}
\if{\AK{3/03/18.
I have realized that with the above definition I accidentally rediscovered the wheel.
My apologies to Helmut.
The `mixed primal-dual derivative' as well as the set $\widehat{\mathfrak{D}}F(\bx,\by)$ below are close relatives of the `critical set'.
My arguments for keeping them are not strong, especially the second one:

1) The `derivative' looks natural, just like the coderivative and tangential/graphical derivative with which it has a few things in common.
I intentionally put the signs as in the definition of the coderivative to simplify the comparison.
They differ from those in the definition of the critical set.
In this paper the signs do not matter as we use the norms of the vectors.

2) In the definition of $\widehat{\mathfrak{D}}F(\bx,\by)$ below I have swapped the output variables compared to the definition of the critical set just to position them in the alphabetical order: $u^*,v$.

Helmut, the decision is yours: thumbs up or down?
I am ready to remove all this derivative-like stuff and stick to the critical set (and its modifications) if you say so.

Could the definitions \eqref{pdd}, \eqref{DF1} or \eqref{DF2} be found elsewhere?
}
\HG{4/03/18. Unfortunately I did not see in 2010 that the critical limit set is related to the primal-dual derivative. I recognized this only one year later but  instead I used then the directional limiting coderivative  allowing a finer analysis. However, for estimating the radius of metric subregularity the primal-dual derivative is well suited and simplifies the notation.
}}\fi
%AK2/03/18
%AK25/05/18
{The next proposition, which follows directly from the definitions, shows that the mapping $\widehat{D}F(\bx,\by)$ combines features of the graphical derivative and the limiting coderivative:
tangents (related to the graphical derivative) are linked with limiting normals (related to the coderivative) to the graph of $F$ in a suitable way.}

\begin{proposition}\label{P1}
{$\widehat{D}F(\bx,\by)(u,v^*)\subset  \overline{D}^{*}F(\bx,\by)(v^*)\times DF(\bx,\by)(u)$ for all $(u,v^*)\in\R^n\times(\R^m)^*$.}
\end{proposition}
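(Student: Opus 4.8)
The plan is to unwind the definitions and show that membership in $\widehat{D}F(\bx,\by)(u,v^*)$ forces the first component into the limiting coderivative set and the second component into the graphical derivative set, essentially for free. Concretely, fix $(u,v^*)\in\R^n\times(\R^m)^*$ and let $(u^*,v)\in\widehat{D}F(\bx,\by)(u,v^*)$. By \eqref{pdd} this means $(u^*,-v^*)\in\overline{N}_{\gph F}((\bx,\by);(u,v))$.

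First I would use the elementary fact, noted just after the definition of the directional limiting normal cone, that $\overline{N}_A(\bar x;w)\subset\overline{N}_A(\bar x)$ and that $\overline{N}_A(\bar x;w)=\emptyset$ whenever $w\notin T_A(\bar x)$. Applying the first inclusion with $A=\gph F$, $\bar x=(\bx,\by)$ and $w=(u,v)$ gives $(u^*,-v^*)\in\overline{N}_{\gph F}(\bx,\by)$, which is exactly the statement $u^*\in\overline{D}^*F(\bx,\by)(v^*)$ by the definition of the limiting coderivative. Applying the second fact, since $\overline{N}_{\gph F}((\bx,\by);(u,v))$ is nonempty (it contains $(u^*,-v^*)$), we must have $(u,v)\in T_{\gph F}(\bx,\by)$, which is precisely $v\in DF(\bx,\by)(u)$ by the definition of the graphical derivative. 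Combining the two conclusions yields $(u^*,v)\in\overline{D}^*F(\bx,\by)(v^*)\times DF(\bx,\by)(u)$, as required.

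There is really no main obstacle here: the proposition is a direct bookkeeping consequence of the three definitions (directional limiting normal cone, limiting coderivative, graphical derivative) together with the two standard observations about $\overline{N}_A(\bar x;w)$ recorded in the preamble, which is why the statement asserts it ``follows directly from the definitions.'' The only point worth stating carefully is that one is reading off \emph{both} coordinates of the pair $(u,v)$ in the direction slot: the tangent-cone membership extracts information about $v$ (and $u$), while the normal-cone inclusion extracts information about $u^*$ (and $v^*$); keeping the sign conventions straight — the $-v^*$ in the normal-cone pair matching the coderivative convention — is the one place a careless reader could slip.
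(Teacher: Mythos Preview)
Your proposal is correct and matches the paper's intent exactly: the paper does not give a proof at all, simply noting that the proposition ``follows directly from the definitions,'' and your argument is precisely that unwinding. The two ingredients you use --- that $\overline{N}_A(\bar x;w)\subset\overline{N}_A(\bar x)$ (a special case of \eqref{eq-100} together with positive homogeneity of the directional cone in $w$) and that $\overline{N}_A(\bar x;w)=\emptyset$ when $w\notin T_A(\bar x)$ --- are exactly the observations recorded in the preliminaries, so nothing is missing.
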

\AK{17/02/18.
Please comment on the above definition and the corresponding definitions of $\rg$, $\rgd$ and $\rgo$ below as well as the name(s), notations and signs.
Could these constructions be useful elsewhere?}

Using \eqref{pdd} we define
%AK14/04/18
%the sets:
{two image sets under $\widehat{D}F(\bx,\by)$}:
\begin{align}\label{DF}
\widehat{\mathfrak{D}}F(\bx,\by):=
&\big\{(u^*,v)\in\widehat{D}F(\bx,\by)(u,v^*)\mid
\|u\|=\|v^*\|_*=1\big\},
\\\label{DF1}
\widehat{\mathfrak{D}}^\circ F(\bx,\by):=
&\big\{(u^*,v)\in\widehat{D}F(\bx,\by)(u,v^*)\mid
\|u\|=\|v^*\|_*=1,\;
{u^\ast}^Tu={v^*}^Tv\big\}.
\end{align}
Observe that the set \eqref{DF} is a small modification of the limit set ${\rm Cr_0}F(\bx,\by)$
%critical for metric subregularity
\cite{Gfr11}:
${(u^*,v)\in\widehat{\mathfrak{D}}F(\bx,\by)}$ if and only if $(v,-u^*)\in{\rm Cr_0}F(\bx,\by)$.
\sloppy
\if{\AD{26/04/18.
This should go with a detailed definition and statement of the characterization at the end of Subsection 2.1.}
}\fi

%AK25/05/18
{
\begin{proposition}\label{P4}
The image set \eqref{DF1} admits an equivalent representation involving an ${m\times n}$ matrix:
\begin{multline}\label{DF2}
\widehat{\mathfrak{D}}^\circ F(\bx,\by)=
\big\{(u^*,v)\in
\widehat{D}F(\bx,\by)(u,v^*)\mid
\|u\|=\|v^*\|_*=1,
\\
B^Tv^*=u^\ast,\; Bu=v,\;
B\in L(\R^n,\R^m)\big\}.
\end{multline}
\end{proposition}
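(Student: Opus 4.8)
The plan is to reduce the claimed set identity to a purely linear-algebraic equivalence. Both descriptions of $\widehat{\mathfrak{D}}^\circ F(\bx,\by)$ range over the same data: a pair $(u,v^*)\in\R^n\times(\R^m)^*$ together with an element $(u^*,v)\in\widehat{D}F(\bx,\by)(u,v^*)$ and the normalization $\|u\|=\|v^*\|_*=1$. The only difference is that \eqref{DF1} imposes the scalar equation ${u^*}^Tu={v^*}^Tv$, while \eqref{DF2} requires the existence of a matrix $B\in L(\R^n,\R^m)$ with $B^Tv^*=u^*$ and $Bu=v$. Since the membership $(u^*,v)\in\widehat{D}F(\bx,\by)(u,v^*)$ involves only $(u,v^*,u^*,v)$ and the graph of $F$ and is untouched by the introduction of $B$, it suffices to prove that, for fixed such data, these two conditions are equivalent.

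The implication ``matrix $\Rightarrow$ scalar'' is immediate: if $B^Tv^*=u^*$ and $Bu=v$, then ${u^*}^Tu=(B^Tv^*)^Tu={v^*}^T(Bu)={v^*}^Tv$. For the converse, assume ${u^*}^Tu={v^*}^Tv$. Since $\|u\|=1$ and $\|v^*\|_*=1$, both $u$ and $v^*$ are nonzero, so one may pick $p^*\in(\R^n)^*$ with ${p^*}^Tu=1$ and $q\in\R^m$ with ${v^*}^Tq=1$. The plan is then to verify that
$$B:=v\,{p^*}^T+q\,{u^*}^T-({u^*}^Tu)\,q\,{p^*}^T\in L(\R^n,\R^m)$$
does the job. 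Indeed, multiplying on the right by $u$ and using ${p^*}^Tu=1$ gives $Bu=v+({u^*}^Tu)q-({u^*}^Tu)q=v$; multiplying on the left by ${v^*}^T$ and using ${v^*}^Tq=1$ gives ${v^*}^TB={u^*}^T+\bigl({v^*}^Tv-{u^*}^Tu\bigr){p^*}^T={u^*}^T$, i.e.\ $B^Tv^*=u^*$, where the last step uses the hypothesis ${u^*}^Tu={v^*}^Tv$.

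I do not expect a genuine obstacle. The only delicate point is guessing the matrix $B$: the naive candidates $v\,{p^*}^T$ (which satisfies $Bu=v$) and $q\,{u^*}^T$ (which satisfies $B^Tv^*=u^*$) each fail the other constraint, and the rank-at-most-two correction above is precisely what reconciles them — and it is exactly in closing that gap that the compatibility relation ${u^*}^Tu={v^*}^Tv$ is consumed. Combining the two implications with the observation that the defining membership and normalization are common to \eqref{DF1} and \eqref{DF2} yields the equality of the two sets.
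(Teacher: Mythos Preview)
Your argument is correct and essentially identical to the paper's: both reduce the set equality to the linear-algebraic equivalence between the scalar relation ${u^*}^Tu={v^*}^Tv$ and the existence of $B$ with $B^Tv^*=u^*$, $Bu=v$, and both build the same rank-at-most-two matrix $B=v\,{p^*}^T+q\,{u^*}^T-({u^*}^Tu)\,q\,{p^*}^T$ (the paper writes $z^*,w$ for your $p^*,q$ and additionally takes them to be norming, $\|z^*\|_*=\|w\|=1$, a harmless extra normalization not needed here).
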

}
\begin{proof}
{Let $u,u^*\in\R^n$, $v,v^*\in\R^m$ and $\|u\|=\|v^*\|_*=1$.
We need to check the equivalence of the condition ${u^\ast}^Tu={v^*}^Tv$ to the pair of conditions $Bu=v$ and $B^Tv^*=u^*$ for some $m\times n$ matrix $B$.}

{Suppose that ${u^\ast}^Tu={v^*}^Tv.$
Choose vectors $z^*\in\R^n$ and $w\in\R^m$ such that $\|w\|=\|z^*\|_*={z^*}^Tu={v^*}^Tw=1$, and
set
\begin{gather}\label{B}
B:=v{z^*}^T+w{u^*}^T-({u^*}^Tu)w{z^*}^T.
\end{gather}
Then $Bu=v+({u^*}^Tu)w-({u^*}^Tu)w=v$ and $B^Tv^*=(v^Tv^*){z^*}+{u^*}-({u^*}^Tu){z^*}=u^*$.}

{Conversely, suppose that $Bu=v$ and $B^Tv^*=u^*$ for some $m\times n$ matrix $B$.
Then
${u^\ast}^Tu=u^Tu^\ast=u^TB^Tv^*={v^*}^TBu={v^*}^Tv.$}
\qed\end{proof}

{
\begin{remark}
The above proof of Proposition~\ref{P4} is constructive.
In the first part, it not only establishes the existence of a matrix $B$ with required properties; it provides the formula \eqref{B} for constructing such a matrix.
\end{remark}
}

The following quantities
%AK2/03/18
%seem to be quite
are
instrumental in deriving bounds for the radius of metric subregularity:
\begin{align}\label{rg}
\rg:= &\inf \bigl\{\max\{\norm{u^*}_*,\norm{v}\} \mid
(u^*,v) \in\widehat{\mathfrak{D}}F(\bx,\by)\bigr\},
%AK14/04/18
%\\
%\rgo:= &\inf \bigl\{\norm{B}\mid
%B\in\widehat{\mathfrak{D}}^\circ F(\bx,\by)\bigr\}.
\\\notag
\rgo:= &\inf \bigl\{\norm{B}\mid
{B\in L(\R^n,\R^m),}
\\\label{rgo}
&\hspace{18mm}
{(B^Tv^*,Bu)\in\widehat{\mathfrak{D}}^\circ F(\bx,\by),\;
\|u\|=\|v^*\|_*=1}
\bigr\}.
\end{align}
%AK30/03/18
The next two modifications of \eqref{rg} and \eqref{rgo} can also be useful:
\begin{align}\label{rgh}
\rgh:= &\inf \bigl\{\norm{u^*}_*+\norm{v} \mid
(u^*,v) \in\widehat{\mathfrak{D}}F(\bx,\by)\bigr\},
\\\label{rgd}
\rgd:= &\inf \bigl\{\max\{\norm{u^*}_*,\norm{v}\} \mid
(u^*,v) \in\widehat{\mathfrak{D}}^\circ F(\bx,\by) \bigr\}.
\end{align}
%AK25/05/18
{They provide, respectively, an upper bound for \eqref{rg} and a lower bound for \eqref{rgo}.
This explains their notations.
Note that \eqref{rgd} is also an upper bound for \eqref{rg}.}
\if{\HG{24/05/18.
I suggest a change of the notation: $\rg$ and $\rgh$ are lower and upper bounds for $\rad_{Lip}F(\bx\for\by)$. Why do we not call it $\underline{{\rm rg}}[F](\bx,\by)$ and
$\overline{{\rm rg}}[F](\bx,\by)$? Similarly, we could use $\underline{{\rm rg}}^\circ[F](\bx,\by)$ instead of $\rgd$, i.e. we use an underline for a lower bound and an overline for an upper bound}
}\fi

\begin{proposition}\label{P3}
\begin{enumerate}
\item
$\rg\leq\rgd\leq\rgo$;
\item
%AK30/03/18
$\rg\leq\rgh
%AK25/05/18
{\leq2\rg}$;
\item
$\rg\ge\inf \{\norm{u^*}_*\mid  u^\ast\in\overline{D}^*F(\bx,\by)(v^*),\ \norm{v^*}_*=1\}$;
\item
$\rg\ge\inf \{\norm{v}\mid v\in DF(\bx,\by)(u),\ \norm{u}=1\}$.
\end{enumerate}
\end{proposition}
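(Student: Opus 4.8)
The plan is to prove each of the four inequalities by unwinding the definitions of the constants \eqref{rg}, \eqref{rgo}, \eqref{rgh}, \eqref{rgd}, together with Propositions~\ref{P1} and~\ref{P4}.

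First, for (i): the inequality $\rg\le\rgd$ is immediate from $\widehat{\mathfrak{D}}^\circ F(\bx,\by)\subset\widehat{\mathfrak{D}}F(\bx,\by)$ (compare \eqref{DF} and \eqref{DF1}), so the infimum defining $\rg$ is taken over a larger set than the one defining $\rgd$, and both use the same objective $\max\{\norm{u^*}_*,\norm v\}$. For $\rgd\le\rgo$ I would take any admissible $B$ for \eqref{rgo}, i.e. $(B^Tv^*,Bu)\in\widehat{\mathfrak{D}}^\circ F(\bx,\by)$ with $\norm u=\norm{v^*}_*=1$, and observe that $\norm{Bu}\le\norm B$ (since $\norm u=1$) and $\norm{B^Tv^*}_*\le\norm{B^T}=\norm B$ (since the operator norm of $B^T$ with respect to the dual norms equals $\norm B$, and $\norm{v^*}_*=1$). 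Hence the pair $(u^*,v)=(B^Tv^*,Bu)$ lies in $\widehat{\mathfrak{D}}^\circ F(\bx,\by)$ with $\max\{\norm{u^*}_*,\norm v\}\le\norm B$; taking infima gives $\rgd\le\rgo$.

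For (ii): $\rg\le\rgh$ is again trivial since both infima run over the same set $\widehat{\mathfrak{D}}F(\bx,\by)$ and $\max\{a,b\}\le a+b$ for nonnegative $a,b$. For $\rgh\le 2\rg$, use $a+b\le 2\max\{a,b\}$ applied to $a=\norm{u^*}_*$, $b=\norm v$: for any $(u^*,v)\in\widehat{\mathfrak{D}}F(\bx,\by)$ we get $\norm{u^*}_*+\norm v\le 2\max\{\norm{u^*}_*,\norm v\}$, and taking the infimum over the common set yields $\rgh\le 2\rg$.

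For (iii) and (iv): both follow by relaxing constraints using Proposition~\ref{P1}. If $(u^*,v)\in\widehat{\mathfrak{D}}F(\bx,\by)$, then by definition there is $v^*$ with $\norm{v^*}_*=1$, $\norm u=1$, and $(u^*,v)\in\widehat D F(\bx,\by)(u,v^*)$; Proposition~\ref{P1} then gives $u^*\in\overline{D}^*F(\bx,\by)(v^*)$ and $v\in DF(\bx,\by)(u)$. For (iii) I would drop the requirement $v\in DF(\bx,\by)(u)$ and the normalization $\norm u=1$, keeping only $u^*\in\overline D^*F(\bx,\by)(v^*)$ with $\norm{v^*}_*=1$; then $\norm{u^*}_*\ge\max\{\norm{u^*}_*,\norm v\}$ is false in general, but what we need is the opposite: $\max\{\norm{u^*}_*,\norm v\}\ge\norm{u^*}_*$, so every feasible point of the $\rg$-problem produces a feasible point of the right-hand side problem with no larger objective value, hence $\rg\ge\inf\{\norm{u^*}_*\mid u^*\in\overline D^*F(\bx,\by)(v^*),\ \norm{v^*}_*=1\}$. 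Symmetrically, for (iv) use $\max\{\norm{u^*}_*,\norm v\}\ge\norm v$ together with $v\in DF(\bx,\by)(u)$, $\norm u=1$, to conclude $\rg\ge\inf\{\norm v\mid v\in DF(\bx,\by)(u),\ \norm u=1\}$.

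The arguments are all of the ``enlarge the feasible set / weaken the objective'' type, so there is no real obstacle; the only point requiring a little care is the claim $\norm{B^T}=\norm B$ when $B^T$ is viewed as acting between the dual spaces with the dual norms, which is the standard duality of operator norms and can be invoked directly. I would also note at the outset that all these quantities should be interpreted with the convention that $\inf\emptyset=+\infty$, so that the inequalities hold vacuously when the relevant sets are empty.
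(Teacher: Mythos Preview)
Your proof is correct and follows essentially the same route as the paper: (i) compares the feasible sets \eqref{DF} and \eqref{DF1}, (ii) uses $\max\{a,b\}\le a+b\le 2\max\{a,b\}$, and (iii)--(iv) drop one component of the $\max$ and invoke Proposition~\ref{P1}. The only minor difference is that for the second inequality in (i) the paper appeals to Proposition~\ref{P4}, whereas you work directly from the definition \eqref{rgo} of $\rgo$ and the operator-norm bounds $\norm{Bu}\le\norm{B}$, $\norm{B^Tv^*}_*\le\norm{B}$; your route is slightly more direct and does not actually need the equivalent representation \eqref{DF2}.
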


\AK{30/03/18.
Any ideas about the relationship between $\rgo$ and $\rgh$?
In the example in Section~\ref{S4}, we have $\rgo<\rgh$.}
\AK{4/03/18.
What do constants \eqref{rg}, \eqref{rgo}, \eqref{rgh} and \eqref{rgd} say about $F$ apart from providing estimates for the subregularity radii?}
\if{\HG{24/05/18.
I suggest extending Proposition \ref{P3}(ii) to $\rg\leq\rgh\leq 2\rg$.}
}\fi
\if{\HG{4/03/18.
We should mention somewhere that the quantities on the right hand side of the inequalities in (ii) and (iii) are exactly the radii for metric regularity and strong metric subregularity
}}\fi

\begin{proof}
(i) In view of the definitions \eqref{rg} and \eqref{rgd}, the first inequality is immediate from comparing \eqref{DF} and \eqref{DF1}.
%AK25/05/18
%Let $B^Tv^*=u^\ast$ and $Bu=v$.
%Then
%${u^\ast}^Tu=u^Tu^\ast=u^TB^Tv^*={v^*}^TBu={v^*}^Tv.$
\if{\AK{14/04/18.
Could this assertion be `reversed': given four vectors, two of which are nonzero, satisfying the compatibility condition, could a matrix $B$ be found?
It is about solvability of a system of $n+m+1$ linear equation with $nm$ variables.
Each of the three subsystems involved has rank$=1$.
The number of equation exceeds the number of variables only when $n=1$ or $m=1$.
It is easy to check that in this case the system is nevertheless solvable.
}
\HG{24/05/18.
Yes, this assertion can be reversed; just take $B=\frac {vu^T}{\norm{u}_2^2}+\frac{v^*{u^*}^T}{\norm{v^*}_2^2}-v^*u^T\frac{{u^*}^Tu}{\norm{u}_2^2\norm{v^*}_2^2}$. As a consequence we have $\widehat{\mathfrak{D}}^\diamond F(\bx,\by)=\widehat{\mathfrak{D}}^\circ F(\bx,\by)$ and the definition of one of these sets can be omitted. I suggest to omit the definition of  $\widehat{\mathfrak{D}}^\circ F(\bx,\by)$ and then to include a lemma which states $\widehat{\mathfrak{D}}^\diamond F(\bx,\by)=\big\{(u^*,v)\in
{\widehat{D}F(\bx,\by)(u,v^*)\mid
\|u\|=\|v^*\|_*=1,}
{B^Tv^*=u^\ast,\; Bu=v,\;
B\in L(\R^n,\R^m)}
\big\}.$}
}\fi
%If, additionally, $\norm{v^\ast}_*=\norm{u}=1$, then
%$\norm{u^*}_*=\norm{B^Tv^*}_*\leq \norm{B^T}\norm{v^*}_*=\norm{B}$ and $\norm{v}=\norm{Bu}\leq \norm{B}\norm{u}=\norm{B}$.
In view of the definitions \eqref{rgo} and \eqref{rgd}, the second inequality follows from
Proposition~\ref{P4}.

%AK25/05/18
%Condition (ii) is
{The inequalities in (ii) are}
immediate from comparing the definitions \eqref{rg} and \eqref{rgh}.

Inequalities (iii) and (iv) are consequences of the definitions \eqref{DF} and \eqref{DF1}, and Proposition~\ref{P1}.
\qed\end{proof}

\AK{3/03/18.
Are the inequalities in Proposition~\ref{P3}(i) strict?
Counterexamples?}
%AK 7/04/18
{The quantities in the right-hand sides of the inequalities in parts (iii) and (iv) of Proposition~\ref{P3} equal to the reciprocals of the moduli of the metric regularity and strong metric subregularity, respectively; cf. \cite[Theorems~4C.2 and 4E.1]{DonRoc14}, and in view of \cite[Theorems~6A.7 and 6A.9]{DonRoc14},
are exactly the radii of the corresponding properties.
Thus, the value of $\rg$ is an upper bound for both these radii.
}

\section{The radius theorem}\label{main}

In this section we present the main results of this paper.
\AD{26/04/18.
Why do we need subsections in Section 3? Just  add to the end of the section discussion of the norms.}
\AK{17/05/18.
No objections.
I keep subsections temporarily to simplify observing the overall structure through the table of contents, and also for the unlikely case that Subsection 3.2 grows.}

%\subsection{Main theorem}
We start with a lemma which is a
%AK 26/02/18
%direct
consequence of \cite[Theorem~10.41
and Exercise~10.43]{RocWet98}.

%AK 7/02/18
%New lemma and updated theorem thanks to Jiri
\begin{lemma}\label{L2}
Consider a mapping
$F:\R^n \rightrightarrows \R^m$
%AK28/02/18
with closed graph,
a function $h:\R^n \to \R^m$ and a point $(u,v) \in \gph (F+h)$ such that $h$ is
Lipschitz continuous around $u$.
Then
\begin{gather}\label{L2-1}
\overline{D}^*(h+F)(u,v)(v^*)\subset\overline{D}^*h(u)(v^*) +\overline{D}^*F(u, v - h(u))(v^*)
\quad\mbox{for all}\;\;
v^*\in\R^m.
\end{gather}
As a consequence,
\begin{multline*}%\label{L2-2}
\overline{N}_{\gph(h+F)}(u,v)
%\label{EqInclNormalCone}
\subset \{(u_h^*+u_F^*,v^*)\mid
\\
(u_h^*,v^*)\in \overline{N}_{\gph h}(u,h(u)),\;
(u_F^*,v^*)\in \overline{N}_{\gph F}(u,v-h(u))\}.
%\\
%\subset \{(u^*,v^*)\mid (u^*+B^Tv^*,v^*)\in \overline{N}_{\gph F}(u,v-h(u)),\;B\in\sd_Ch(u)\}.
\end{multline*}
If, additionally, $h$ is strictly differentiable at $u$, then
\begin{gather*}%\label{L2-3}
\overline{N}_{\gph(h+F)}(u,v)= \{(u^*,v^*)\in\R^n\times\R^m\mid
(u^*+\nabla h(u)^Tv^*,v^*)\in \overline{N}_{\gph F}(u,v-h(u))\}.
\end{gather*}
\end{lemma}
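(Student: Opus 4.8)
The plan is to reduce Lemma~\ref{L2} to the calculus results for coderivatives already available in \cite{RocWet98}, and then to translate the coderivative inclusions into the claimed normal-cone inclusions by purely formal manipulations. The key observation is that $\gph(h+F)=\{(x,y)\mid y-h(x)\in F(x)\}$, so the graph of $h+F$ is the image of $\gph F$ under the bijective, bi-Lipschitz change of variables $\Phi(x,z):=(x,z+h(x))$ with inverse $\Phi^{-1}(x,y)=(x,y-h(x))$. Equivalently, writing $H(x):=(x,h(x))$ one has $h+F=F\circ\pi+h$ in an appropriate sense; the cleanest route, though, is to invoke the sum rule for coderivatives of set-valued mappings (the mapping $x\mapsto h(x)+F(x)$ is a sum of a Lipschitz single-valued map and a set-valued map with closed graph). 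This is exactly \cite[Theorem~10.41 and Exercise~10.43]{RocWet98}, which yields \eqref{L2-1} directly: $\overline{D}^*(h+F)(u,v)(v^*)\subset\overline{D}^*h(u)(v^*)+\overline{D}^*F(u,v-h(u))(v^*)$.

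First I would record \eqref{L2-1} as the cited consequence, being careful that $h$ Lipschitz around $u$ and $\gph F$ closed are precisely the hypotheses under which the Rockafellar--Wets sum rule applies, and that $v-h(u)\in F(u)$ so the point $(u,v-h(u))$ does lie in $\gph F$. Next I would unwind the definition of the limiting coderivative: $(u^*,v^*)\in\overline{N}_{\gph(h+F)}(u,v)$ means $u^*\in\overline{D}^*(h+F)(u,v)(-v^*)$ (up to the sign convention fixed in the definitions of $D^*$ and $\overline{D}^*$ in the excerpt). Applying \eqref{L2-1} with $-v^*$ in place of $v^*$ gives $u^*=u_h^*+u_F^*$ with $u_h^*\in\overline{D}^*h(u)(-v^*)$ and $u_F^*\in\overline{D}^*F(u,v-h(u))(-v^*)$, i.e. $(u_h^*,v^*)\in\overline{N}_{\gph h}(u,h(u))$ and $(u_F^*,v^*)\in\overline{N}_{\gph F}(u,v-h(u))$. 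Collecting these yields the displayed inclusion for $\overline{N}_{\gph(h+F)}(u,v)$.

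For the final assertion, assume $h$ strictly differentiable at $u$. Then $h$ is, in particular, strictly differentiable, so its graph is a smooth manifold near $(u,h(u))$ and $\overline{N}_{\gph h}(u,h(u))=\{(\nabla h(u)^Tv^*,-v^*)\mid v^*\in\R^m\}$; equivalently $\overline{D}^*h(u)(w^*)=\{\nabla h(u)^Tw^*\}$ is single-valued and linear in $w^*$. Under strict differentiability the sum rule \eqref{L2-1} holds with equality (this is the standard strengthening: adding a strictly differentiable single-valued function to a set-valued mapping does not lose anything in the coderivative calculus), so $\overline{D}^*(h+F)(u,v)(v^*)=\nabla h(u)^Tv^*+\overline{D}^*F(u,v-h(u))(v^*)$. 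Translating back: $(u^*,v^*)\in\overline{N}_{\gph(h+F)}(u,v)$ iff $u^*\in\overline{D}^*(h+F)(u,v)(-v^*)=\nabla h(u)^T(-v^*)+\overline{D}^*F(u,v-h(u))(-v^*)$, i.e. iff $u^*+\nabla h(u)^Tv^*\in\overline{D}^*F(u,v-h(u))(-v^*)$, which is exactly $(u^*+\nabla h(u)^Tv^*,v^*)\in\overline{N}_{\gph F}(u,v-h(u))$. I expect the only real point needing care is the bookkeeping of sign conventions between $N$, $\overline N$, $D^*$ and $\overline D^*$ as fixed in the Preliminaries, together with checking that \cite[Exercise~10.43]{RocWet98} indeed supplies the equality (not merely the inclusion) in the strictly differentiable case; the rest is a direct substitution.
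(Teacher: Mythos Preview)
Your proposal is correct and follows exactly the route the paper indicates: the paper does not give a proof but simply states that the lemma ``is a consequence of \cite[Theorem~10.41 and Exercise~10.43]{RocWet98}'', which is precisely the sum rule you invoke, and you then carry out the straightforward translation between coderivatives and limiting normal cones using the sign convention $(u^*,-v^*)\in\overline{N}_{\gph F}\Leftrightarrow u^*\in\overline{D}^*F(v^*)$ fixed in the Preliminaries. Your treatment in fact supplies more detail than the paper itself; the only cosmetic point is that the opening discussion of the change of variables $\Phi$ is unnecessary once you commit to the sum-rule route.
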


\if{\AK{7/02/18: Could a version of \cite[Proposition 2.8]{Mor94.2} be found in \cite{RocWet98}, \cite{DonRoc14} or the book by Mordukhovich for a reference?
The result is well known.}}\fi

Our main result given next provides lower and upper bounds for the radius of metric subregularity for Lipschitzian perturbations and the exact radius
%AK26/02/18
formula
for
the
other classes of perturbations.
\AD{26/04/18.
In the Intro we say: ``In this paper we consider mappings with {\em closed graph}." But then why we assume closed graph in Theorem 7 and Corollaries 8 and 9?}
\AK{17/05/18.
I hesitate removing this assumption from the theorem and corollaries.
In the past, I always tried to avoid this type of repetitions.
However, on several occasions this led to some of my results cited wrongly (at least once by myself).
There was even an instance (or two) when other people were `correcting' my results by adding `missing' assumptions.
Now I do such repetitions all the time.}

\begin{theorem} \label{ThRadLip}
Consider a mapping $F:\R^n \rightrightarrows \R^m$
%AK1/03/18
with closed graph
and a point ${(\bx,\by)
%AK26/02/18
\in\gph F}.
$
%at which $F$ is metrically subregular.
Then
\begin{gather}\label{EqRadLip}
%\rad_{Lip}F(\bx\for\by)\geq \rg,
\rg\leq\rad_{Lip}F(\bx\for\by)\leq
%AK30/03/18
%AK14/04/18
\rgh,
%{\min\{\rgo,\rgh\}},
\\\label{1}
\rad_{ss}F(\bx\for\by)=\rad_{C^1}F(\bx\for\by)=\rgo.
\end{gather}
\end{theorem}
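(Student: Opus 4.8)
The plan is to establish the theorem in two parts, dealing separately with the Lipschitzian bounds \eqref{EqRadLip} and the exact formula \eqref{1} for the semismooth and $C^1$ classes. For both parts the common engine is the sufficient condition for metric subregularity in terms of the critical limit set (equivalently, the primal-dual derivative): by \cite[Theorem~3.2]{Gfr11}, $F$ is metrically subregular at $\bx$ for $\by$ whenever $(0,0)\notin{\rm Cr_0}F(\bx,\by)$, which by the identification noted after \eqref{DF1} means precisely that $\widehat{\mathfrak{D}}F(\bx,\by)$ does not contain a pair $(u^*,v)$ with $u^*=0$ and $v=0$ --- but since in $\widehat{\mathfrak{D}}F(\bx,\by)$ one always has $\|u\|=\|v^*\|_*=1$, the relevant obstruction is that $\widehat{\mathfrak{D}}F(\bx,\by)$ contains an element with small $\max\{\|u^*\|_*,\|v\|\}$, i.e. that $\rg$ is small. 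So the perturbation analysis reduces to tracking how $\widehat{\mathfrak{D}}$ (and its variants) change when $F$ is replaced by $F+h$, which is exactly what Lemma~\ref{L2} provides.

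For the \emph{lower bound} $\rg\le\rad_{Lip}F(\bx\for\by)$: take any $h\in\mathcal{F}_{Lip}$ with $h(\bx)=0$ such that $F+h$ is not metrically subregular at $\bx$ for $\by$; I want to show $\lip(h;\bx)\ge\rg$, or rather that the infimum of such $\lip(h;\bx)$ is $\ge\rg$. By the contrapositive of the Gfrerer criterion applied to $F+h$, failure of subregularity forces (through a limiting argument producing a selection of the critical set of $F+h$) the existence of sequences realizing $(0,0)\in{\rm Cr_0}(F+h)(\bx,\by)$; feeding these through the coderivative sum rule \eqref{L2-1} of Lemma~\ref{L2} splits the normal-cone elements into an $F$-part and an $h$-part, with the $h$-part controlled by $\partial_C h(\bx)$ and hence by $\lip(h;\bx)=\sup\{\|B\|\mid B\in\partial_C h(\bx)\}$. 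This yields a point of $\widehat{\mathfrak{D}}F(\bx,\by)$ (possibly after passing to the directional limiting normal cone and renormalizing) whose $\max\{\|u^*\|_*,\|v\|\}$ is bounded by $\lip(h;\bx)$, so $\rg\le\lip(h;\bx)$. For the \emph{upper bound} $\rad_{Lip}F(\bx\for\by)\le\rgh$: given $(u^*,v)\in\widehat{\mathfrak{D}}F(\bx,\by)$ attaining (nearly) $\rgh$, i.e. with $\|u\|=\|v^*\|_*=1$ and $(u^*,-v^*)\in\overline{N}_{\gph F}((\bx,\by);(u,v))$, I construct an explicit Lipschitz (in fact linear) perturbation $h(x)=-B(x-\bx)$ with $B$ chosen so that $Bu=v$ and $B^Tv^*=u^*$ --- this is possible by the rank-one-type construction in the proof of Proposition~\ref{P4}, up to the compatibility condition, which is where the factor relating $\rgh$ and the two-sided data enters --- arranged so that $(0,0)\in{\rm Cr_0}(F+h)(\bx,\by)$ along the direction $(u,0)$ or $(u,v-Bu)$, hence $F+h$ is not subregular, while $\lip(h;\bx)=\|B\|\le\|u^*\|_*+\|v\|$. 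Combining gives \eqref{EqRadLip}.

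For \eqref{1}: the inequality $\rad_{C^1}F(\bx\for\by)\le\rgo$ is obtained by a $C^1$ (indeed affine, hence smooth) version of the upper-bound construction above, now using that the compatibility condition ${u^*}^Tu={v^*}^Tv$ characterizing $\widehat{\mathfrak{D}}^\circ F(\bx,\by)$ is exactly what makes the linear map $B$ with $Bu=v$, $B^Tv^*=u^*$ exist (Proposition~\ref{P4}), and $\|B\|$ can be taken to realize $\rgo$; the perturbation $h(x)=-B(x-\bx)$ is $C^1$ with $\lip(h;\bx)=\|B\|$, and one checks $F+h$ loses subregularity along the critical direction. For the reverse direction $\rg^\circ\le\rad_{ss}F(\bx\for\by)$ (which, together with \eqref{8}, closes the chain since $\rad_{ss}\le\rad_{C^1}\le\rgo\le\rad_{ss}$), I use that for a \emph{semismooth} perturbation $h$ the Hadamard directional derivative $h'(\bx;\cdot)$ exists and the coderivative behaves along critical directions like multiplication by a matrix $B\in\partial_C h(\bx)$ (via the limit \eqref{eq-101}); thus when $F+h$ fails to be subregular, the limiting argument produces not merely a pair in $\widehat{\mathfrak{D}}F(\bx,\by)$ but one satisfying the extra scalar equality ${u^*}^Tu={v^*}^Tv$ (because the $h$-contribution is genuinely $Bu=v_h$, $B^Tv^*=u^*_h$ for a common $B$), landing us in $\widehat{\mathfrak{D}}^\circ F(\bx,\by)$ and giving $\|B\|\ge\rgo$ whence $\lip(h;\bx)\ge\rgo$.

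The main obstacle I expect is the upper-bound (sufficiency-of-the-perturbation) direction: showing that the explicitly built linear/$C^1$ perturbation $h$ actually \emph{destroys} metric subregularity. The criterion $(0,0)\notin{\rm Cr_0}F(\bx,\by)$ is only sufficient, not necessary, for subregularity, so one cannot simply "turn it around"; instead one must exhibit, for $F+h$, an explicit sequence $x_k\to\bx$ witnessing failure of \eqref{msr} --- typically $x_k=\bx+t_k u$ with $d(\bx,(F+h)^{-1}(\by))$ of strictly larger order than $d(\by,(F+h)(x_k))$ --- which requires careful quantitative estimates along the critical direction $u$ and genuine use of the semismoothness/$C^1$ regularity of $h$ to linearize $h$ to first order. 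The secondary technical point is handling the directional limiting normal cone $\overline{N}_{\gph F}((\bx,\by);(u,v))$ and the normalizations $\|u\|=\|v^*\|_*=1$ correctly under the coderivative sum rule of Lemma~\ref{L2}, including the subtlety that Lemma~\ref{L2} as stated is for the (non-directional) limiting coderivative, so one either needs a directional refinement of it or a separate diagonal-sequence argument to keep track of directions.
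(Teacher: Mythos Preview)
Your overall architecture---lower bounds via Gfrerer's criterion plus the coderivative sum rule, upper bounds via an explicit perturbation---matches the paper, and your sketches of $\rg\le\rad_{Lip}$ and $\rgo\le\rad_{ss}$ are essentially the paper's Steps~1 and~3. However, two points are genuine gaps.

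\medskip
\textbf{The upper Lipschitz bound cannot be achieved by a linear perturbation.} You propose $h(x)=-B(x-\bx)$ with $Bu=v$ and $B^Tv^*=u^*$. But for a generic $(u^*,v)\in\widehat{\mathfrak{D}}F(\bx,\by)$ (as opposed to $\widehat{\mathfrak{D}}^\circ F(\bx,\by)$) the compatibility condition ${u^*}^Tu={v^*}^Tv$ need \emph{not} hold, and then by Proposition~\ref{P4} no such $B$ exists. So the linear construction, which is correct for Step~4 ($\rad_{C^1}\le\rgo$), fails here. The paper's Step~2 instead builds a genuinely nonlinear Lipschitz perturbation: from the sequences $t_k,u_k,v_k,u^*_k,v^*_k$ realizing the directional limiting normal it defines a piecewise-affine scalar function $\chi_\tau$ (Lipschitz with modulus~$1$, $\chi_\tau'(\tau_k)=0$, $\chi_\tau(\tau_k)/\tau_k\to1$) and sets
\[
h(x)=\chi_{\tau_u}\bigl(\hat u^{*T}(x-\bx)\bigr)\,v+\zeta(x-\bx)\,\hat v,
\]
with $\hat u^*,\hat v$ duality-normalized and $\zeta$ built from $u^*$ via a similar $\chi$. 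This decouples the ``primal'' requirement $h(\bx+t_ku_k)/t_k\to v$ from the ``dual'' requirement $\nabla h(\bx+t_ku_k)^Tv^*_k\to u^*$, which a single linear map cannot achieve without the compatibility condition; the price is $\lip(h;\bx)\le\|u^*\|_*+\|v\|$, and this is precisely where the sum---hence $\rgh$ rather than $\rg$---enters.

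\medskip
\textbf{How subregularity is actually destroyed.} You correctly flag that $(0,0)\in{\rm Cr}_0(F+h)(\bx,\by)$ does not by itself imply non-subregularity, and you propose to exhibit a sequence witnessing failure of \eqref{msr}. The paper does not attempt this. Instead, in both Step~2 and Step~4 it invokes \cite[Theorem~3.2(2)]{Gfr11}: once $(0,0)\in{\rm Cr}_0(F+h)(\bx,\by)$ has been established, that result supplies a further $C^1$ perturbation $\tilde h$ with $\tilde h(\bx)=0$ and $\|\nabla\tilde h(\bx)\|=0$ such that $F+h+\tilde h$ is \emph{not} metrically subregular. Since $\lip(h+\tilde h;\bx)=\lip(h;\bx)$ (and $h+\tilde h$ stays in the relevant class), the radius bound follows without any explicit witness sequence. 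You should use this device in both upper-bound steps.
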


\if{\HG{10/2/2018

I think that also an upper bound $\rad_{Lip}F(\bx\for\by)\leq c\,\rg$ with some constant $c$ is valid. However, the proof is very complicated and  lengthy and so I decided not to work it out in full detail. The problem is that the required perturbation $h$ looks very ugly (e.g., it cannot be semismooth). Maybe somebody has a good idea to prove this upper bound.}

\AK{27/02/18.
It would be great to have such an estimate.
At the moment, I don't see how to get it.
$c$ must be independent of $F$.}}\fi

\if{\AK{14/01/18.
%It is not absolutely trivial to deduce this from \cite[Theorem 10.41]{RocWet98}.
%The statement could be added as a separate proposition.
Could the usage of the Clarke generalized Jacobian be avoided?

It would be good to have an analogue of this result in terms of Fr\'echet constructions.}}\fi

%Could the assumptions of closed graph and semismoothness be eliminated?}

\begin{proof}
\underline{Step 1: $\rg\leq\rad_{Lip}F(\bx\for\by)$}.
Let $h\in\mathcal{F}_{Lip}$
be  such that $h+F$ is not metrically subregular at $\bx$ for $\by$.
From \cite[Theorem 3.2]{Gfr11} we obtain that $(0,0) \in  {\rm Cr}_0 (h+F)(\bx,\by)$.
This implies that there exist sequences $t_k$, $u_k$, $v_k$, $u^*_k$, $v^*_k$ such that \begin{gather}\label{ThRadLipP1}
t_k\downto0,\;v_k\to0,\;u_k^*\to 0,\;
\|u_k\|=\|v_k^*\|_* = 1\;(k=1,2,\dots),
\\\notag
(-u_k^*, v_k^*)\in {N}_{\gph (h+F)}(\bx+t_ku_k, \by+t_kv_k)\;(k=1,2,\dots).
\end{gather}
By
%inclusion \eqref{EqInclNormalCone}
%AK11/02/18
Lemma~\ref{L2},
%(the first inclusion in \eqref{L2-2})}
there are elements $u_{h,k}^*$ such that
\begin{gather}\label{ThRadLipP2}
(u_{h,k}^*,v_k^\ast)\in \overline{N}_{\gph h}(\bx+t_ku_k,
h(\bx+t_ku_k)),
\\\label{ThRadLipP3}
(-u_k^*-u_{h,k}^*, v_k^*) \in \overline{N}_{\gph F}(\bx+t_ku_k, \by+t_k
\tilde v_k),
\end{gather}
%AK11/02/18
%Next consider any
where
\begin{gather}\label{ThRadLipP4}
\tilde v_k:=v_k - h(\bx+t_ku_k)/t_k.
\end{gather}
Let
$\gamma>\lip(h;\bx)$.
Then,
for all $k$ sufficiently large, in view of \cite[Proposition 9.24(b)]{RocWet98},
\begin{gather*}
\norm{u_{h,k}^*}_*\leq \gamma\norm{v_k^*}_*=\gamma,
\quad\mbox{and}
\\
\norm{\tilde v_k}
=\norm{v_k-(h(\bx+t_ku_k)-h(\bx))/t_k} \leq\norm{v_k}+\gamma\norm{u_k}=\norm{v_k}+\gamma.
\end{gather*}
Without loss of generality, we can assume that
\begin{gather}\label{ThRadLipP5}
u_k\to u,\;v^*_k\to v^*,\;
u_{h,k}^*\to u_h^*,\;
\tilde v_k
\to\tilde v,\;
\|u\|=\|v^*\|_*=1.
\end{gather}
We conclude that $\norm{\tilde v}\leq \gamma$, $\norm{u_h^*}_*\leq \gamma$ and
$(-u_h^*,v^*)\in \overline{N}_{\gph F}((\bx,\by),(u,\tilde v))$,
%AK15/02/18
i.e. ${(-u_h^*,\tilde v)\in\widehat{D}F(\bx,\by)(u,-v^*)}$.
Thus, $\rg\leq\max\{\norm{u_h^*}_*,\norm{\tilde v}\}\leq \gamma$.
%AK16/02/18
%and, since $\gamma$ can be chosen arbitrarily close to $\lip(h;\bx)$, the bound $\rg\le\lip(h;\bx)$ follows.
%The inequality \eqref{EqRadLip} is now a direct consequence of the definition of $\rad_{Lip}F(\bx\for\by)$.
Taking infimum in the last inequality over all $\gamma>\lip(h;\bx)$ and then over all  $h\in\mathcal{F}_{Lip}$
such that $h+F$ is not metrically subregular at $\bx$ for $\by$, we arrive at
%\blue{the first inequality in }\eqref{EqRadLip}.
$\rg\leq\rad_{Lip}F(\bx\for\by)$.

%Next we show the second inequality in \eqref{EqRadLip}.
\underline{Step 2: $\rad_{Lip}F(\bx\for\by)\leq\rgh$}.
%AK12/03/18
%We are going to construct a special Lipschitz continuous perturbation function $h$ of the \SVM\ $F$.
To show this inequality, we construct a special Lipschitz continuous perturbation $h$.
Given a strictly decreasing sequence $\tau=(\tau_k)$ of positive real numbers converging to $0$, set for every $k=1,2,\ldots$,
$$a_{k+1}:=\tau_{k+1}+\frac{\tau_k-\tau_{k+1}}{2(k+1)}, \quad b_k:=\tau_{k}-\frac{\tau_k-\tau_{k+1}}{2(k+1)}.$$
%AK10/03/18
Then $\tau_{k+1}<a_{k+1}<b_k<\tau_k$.
Define a function $\chi_\tau:\R\to\R$ recursively as follows:
\[\chi_\tau(t):=
\begin{cases}
0&\mbox{if } t=0,
\\
\chi_\tau(a_{k+1})-a_{k+1}+t&\mbox{if } a_{k+1}<t\le b_k,
\\
\chi_\tau(b_k)&\mbox{if } b_k<t\le a_k,
\\
-\chi_\tau(-t)&\mbox{if } t<0.
\end{cases}
\]
Thus, $\chi_\tau$ is linear on every interval $[a_{k+1},b_k]$ and $[-b_k,-a_{k+1}]$ with slope $1$ and constant on every interval $[b_k,a_k]$ and $[-a_k,-b_k]$. In particular, $\chi_\tau$ is Lipschitz continuous on $\R$ with modulus $1$ and continuously differentiable at $\tau_k$ with the derivative equal $0$.
%AK9/03/18
Moreover, for all $t\in(b_k,a_k)$, we have
\begin{gather*}
\chi_\tau(t)=\sum_{j=k}^\infty(b_j-a_{j+1})
=\sum_{j=k}^\infty (\tau_{j}-\tau_{j+1})\left(1-\frac{1}{j+1}\right)
= \tau_k-\sum_{j=k}^\infty\frac{\tau_j-\tau_{j+1}}{j+1},
\end{gather*}
and consequently,
\begin{align*}
\tau_k>\chi_\tau(\tau_k) >\tau_k-\frac{1}{k+1}\sum_{j=k}^\infty(\tau_j-\tau_{j+1})
=\tau_k\left(1-\frac 1{k+1}\right),
\end{align*}
showing
\[\lim_{k\to+\infty}\frac{\chi_\tau(\tau_k)}{\tau_k} =\lim_{k\to+\infty}\frac{\chi_\tau(-\tau_k)}{-\tau_k}=1.\]

Next, consider $(u^*,v) \in\widehat{D}F(\bx,\by)(u,v^*)$ with $\|u\|=\|v^*\|_*=1$ and choose elements $\hat u^\ast\in(\R^n)^\ast$ and $\hat v\in\R^m$ with $\norm{\hat u^\ast}_*=\norm{
%AK9/03/18
%v
\hat v
}=1$ such that
\[{\hat u{}^\ast}^Tu=\norm{u}=1,\quad {v^\ast}^T
%AK9/03/18
%v
\hat v
=\norm{v^\ast}_*=1.\]
By the definition of $\widehat{D}F(\bx,\by)$, there exist sequences $t_{k}\searrow 0$, $u_{k}\rightarrow u$, $v_{k}\rightarrow v$, $u^{*}_{k}\rightarrow u^{*}$ and $v^{*}_{k}\rightarrow v^{*}$ such that
\[(u_k^*,-v_k^*)\in N_{\gph F}(\bx+t_ku_k,\by+t_kv_k).\]
By passing to a subsequence if necessary, we can assume that the sequence $\tau_u:=(t_k{\hat u{}^\ast}^Tu_k)$ is strictly decreasing.
If ${u^\ast}^Tu\ne0$, we can also assume that the sequence $\tau_{u^\ast}:=(t_k\vert {u^\ast}^Tu_k\vert)$ is strictly decreasing; in this case we set $\zeta(x):={u^\ast}^Tx-\chi_{\tau_{u^\ast}}({u^\ast}^Tx)$, and
observe that
\[\lim_{k\to+\infty}\frac{\zeta(t_ku_k)}{t_k} =({u^\ast}^Tu)\lim_{k\to+\infty}\frac{\zeta(t_ku_k)}{t_k {u^\ast}^Tu_k} =({u^\ast}^Tu)\left(1-\lim_{k\to+\infty} \frac{\chi_{\tau_{u^\ast}}(t_k{u^\ast}^Tu_k)} {t_k{u^\ast}^Tu_k}\right)=0.\]
When ${u^\ast}^Tu=0$, we set $\zeta(x):={u^\ast}^Tx$ and
observe that $\zeta(t_ku_k)/t_k={u^\ast}^Tu_k\to0$ as ${k\to+\infty}$.
In both cases, $\zeta$ is Lipschitz continuous on $\R^n$ with modulus $\|u^*\|_*$ and continuously differentiable at $t_ku_k$ with the derivative $\nabla \zeta(t_ku_k)=u^\ast$.
Next, consider the mapping $h:\R^n\to\R^n$ given by
\[h(x):=
\chi_{\tau_u}({\hat u{}^\ast}^T(x-\bx))v+
\zeta(x-\bx)\hat v,\quad x\in\R^n.\]
We have
\[\lim_{k\to+\infty}\frac{h(\bx+t_ku_k)}{t_k} =\lim_{k\to+\infty}\left(\frac{\chi_{\tau_u}(t_k{\hat u{}^\ast}^Tu_k)}{t_k}v+\frac{\zeta(t_ku_k)}{t_k}\hat v\right) =v.\]
Further, $h$ is continuously differentiable at $\bx+t_ku_k$ with the derivative $\nabla h(\bx+t_ku_k)=\hat v{u^\ast}^T$, implying $\nabla h(\bx+t_ku_k)^Tv_k^*=u^\ast(\hat v^Tv_k^\ast)\to u^\ast$. By virtue of Lemma~\ref{L2}, we obtain
 \[(u_k^\ast -u^\ast(\hat v^Tv_k^\ast), -v_k^\ast)\in N_{\gph (F-h)}(\bx+t_ku_k, \by +t_kv_k-h(\bx+t_ku_k)).\]
 Since $u_k^\ast -u^\ast(\hat v^Tv_k^\ast)\to 0$ and $v_k-h(\bx+t_ku_k)/t_k\to0$ as $k\to 0$, we obtain
 that $(0,0)\in {\rm Cr}_{0}(F-h)(\bx,\by)$.
By \cite[Theorem 3.2(2)]{Gfr11}, we can now find a $C^{1}$ perturbation $\tilde{h}$ with $\tilde{h}(\bar{x})=0$ and $\| \nabla \tilde{h}(\bar{x})\| = 0$ such that $F-h+\tilde{h}$ is not metrically subregular at $(\bar{x},\bar{y})$.
We now want to estimate $\lip(h;\bx)$.
Taking any $x_1,x_2\in\R^n$, we have
\begin{align*}
\norm{h(x_1)-h(x_2)}&\leq (\norm{v}\norm{\hat u^\ast}_*+\norm{\hat v}\norm{u^\ast}_*)\norm{x_1-x_2}=(\norm{v}+\norm{u^\ast}_*)\norm{x_1-x_2}.
\end{align*}
Hence, $\lip(h-\tilde h;\bx)=\lip(h;\bx)\leq \norm{v}+\norm{u^\ast}_*$ and,
since $(h-\tilde{h})(\bar{x})=0$, we conclude that $\rad_{Lip}F(\bx\for\by)\leq \norm{v}+\norm{u^\ast}_*$. The inequality
%the second inequality in \eqref{EqRadLip}
$\rad_{Lip}F(\bx\for\by)\leq\rgh$
follows.
This completes the proof of \eqref{EqRadLip}.

\underline{Step 3: $\rgo\leq\rad_{ss}F(\bx\for\by)$}.
Let $h\in\mathcal{F}_{ss}$
be  such that $h+F$ is not metrically subregular at $\bx$ for $\by$.
Then $h\in\mathcal{F}_{Lip}$ and, as shown above, there exist sequences $t_k$, $u_k$, $v_k$, $\tilde v_k$, $u^*_k$, $u^*_{h,k}$, $v^*_k$ and vectors $u$, $\tilde v$, $v^*$, $u_h^*$ such that conditions \eqref{ThRadLipP1}, \eqref{ThRadLipP2}, \eqref{ThRadLipP3}, \eqref{ThRadLipP4} and \eqref{ThRadLipP5} hold true.
Thanks to the Lipschitz continuity of $h$, it follows from \eqref{ThRadLipP2} that $-u_{h,k}^*\in\overline{\sd}\ang{v_k^*,h}(\bx+t_ku_k)$ (cf. e.g. \cite[Proposition~9.24]{RocWet98}), and consequently, $
%AK10/03/18
%-
u_{h,k}^*
\in B_k^Tv_k^*$
%for some $B_k
where
$
-B_k
\in\sd_Ch(\bx+t_ku_k)$.
From the Lipschitz continuity of $h$,
the sequence of matrices $B_k$ is bounded.
Without loss of generality, we can assume that $B_k\to B\in
-
\partial_Ch(\bx)$.
Note that $\|B\|\le\lip(h;\bx)$.
Thus, $
%-
u_{h}^*
=B^Tv^*$.
Since $h$ is semismooth, it is directionally differentiable in all directions, and, in view of \eqref{ThRadLipP4} $\tilde v_k\to-h'(\bx;u)=
%-
B
u$.
It now follows from \eqref{ThRadLipP3} that
$(
-
B^Tv^*,v^*)
\in\overline{N}_{\gph F}((\bx,\by);(u,
%-B
B
u))$,
i.e. $(B^T
%v^*,
(-v^*),
%-B
B
u)\in\widehat{D}F(\bx,\by)(u,-v^*)$.
%Hence,
Since $\norm{u}=\norm{-v^*}_*=1$, we have
$\rgo\le\|B\|\le\lip(h;\bx)$.
Taking infimum in the last inequality over all $h\in\mathcal{F}_{ss}$
such that $h+F$ is not metrically subregular at $\bx$ for $\by$, we arrive at $\rgo\leq\rad_{ss}F(\bx\for\by)$.

\underline{Step 4: $\rad_{C^1}F(\bx\for\by)\le \rgo$}.
Suppose $\rgo<+\infty$; otherwise there is nothing to prove.
Let $\eps>0$.
It follows from
the definition of $\rgo$
that there exists a matrix $B\in \R^{m\times n}$ with $\|B\|<\rgo+\eps$ and  vectors $u\in\mathbb{R}^{n}$ and $v^{*}\in\mathbb{R}^{m}$ with ${\|u\|=\|v^*\|_*=1}$ such that
$(B^Tv^*,-v^*)\in\overline{N}_{\gph F}((\bx,\by);(u,Bu))$.
Hence, there exist sequences
$t_{k}\searrow 0$, $u_{k}\rightarrow u$, $v_{k}\rightarrow Bu$, $u^{*}_{k}\rightarrow B^{T}v^{*}$ and $v^{*}_{k}\rightarrow v^{*}$
such that
\begin{gather}\label{6}
(u^{*}_{k},-v^{*}_{k})\in N_{{\rm\gph} F}(\bar{x}+t_{k}u_{k}, \bar{y}+t_{k}v_{k})
\end{gather}
for all $k=1,2,\ldots$.
Set $h(x):=
%AK3/03/18
-
B(x-\bx)$ $(x\in\R^n)$.
Obviously, $h(\bx)=0$, $h$ is $C^{1}$ and $\nabla h(x)=
%AK3/03/18
-
B$ for any $x\in\R^n$.
Invoking
Lemma~\ref{L2},
%AK3/03/18
with $h+F$ and $-h$ in place of $F$ and $h$, respectively,
we obtain from \eqref{6} that
\begin{gather*}%\label{6}
(\hat u^{*}_{k},-v^{*}_{k})\in N_{\gph(h+F)}(\bar{x}+t_{k}u_{k},\bar{y}+t_{k}\hat v_{k}),
\end{gather*}
where
\if{
\begin{align*}
& \hat{v}_{k}:= v_{k}+ h(\bar{x}+t_{k}u_{k})/ t_{k}\\
&  \hat{u}^{*}_{k}:=\nabla h(\bar{x}+t_{k}u_{k})^{T} v^{*}_{k}-u^{*}_{k}
\end{align*}
}\fi
$\hat{v}_{k}:= v_{k}
%AK3/03/18
%+
-
Bu_{k}$, $\hat{u}^{*}_{k}:=u^{*}_{k}-B^{T} v^{*}_{k}$.
Observe that $\hat{v}_{k}\to0$ and $\hat{u}^{*}_{k}\to0$ as $k\to+\infty$,
which implies that
$(0,0)\in {\rm Cr}_{0}(h+F)(\bar{x},\bar{y})$.
By \cite[Theorem 3.2(2)]{Gfr11}, we can now find a $C^{1}$ perturbation $\tilde{h}$ with $\tilde{h}(\bar{x})=0$ and $\| \nabla \tilde{h}(\bar{x})\| = 0$ such that $F+h+\tilde{h}$ is not metrically subregular at $\bar{x}$ for $\bar{y}$.
Since $(h+\tilde{h})(\bar{x})=0$ and $\lip(h+\tilde h;\bx) =\|\nabla(h+\tilde{h})(\bar{x})\|=\norm{B}$, we conclude  that $\rad_{C^1}F(\bx\for\by)\le\norm{B}<\rgo+\varepsilon$.
Taking infimum in the last inequality over all $\varepsilon > 0$, we arrive at $\rad_{C^1}F(\bx\for\by)\le \rgo$.
%The inequalities \eqref{ThRadLipP6} and \eqref{ThRadLipP7} together with \eqref{8} prove \eqref{1}.
In view of \eqref{8}, this completes the proof of \eqref{1}.
%AK30/03/18
%AK14/04/18
%{\underline{$\rad_{Lip}F(\bx\for\by)\le \rgo$}.
%The inequality is a consequence of conditions \eqref{8} and \eqref{1}.}
\qed\end{proof}
\if{\HG{4/03/18.
I do not understand this remark: Think at the example $F\equiv 0$ in the introduction: The quantity \eqref{rgo} has nothing to do with the reciprocal of $\reg$. However, I conjecture that a relation of the form $rad \leq 1/reg$ always hold

\red{AK 30/03/18.
Is it OK now?}
}

\AK{10/03/18.
The remark should be rewritten.
The radius theorem places the subregularity property into the general scheme of regularity concepts if the `paradigm \eqref{radreg}' and `regularity modulus' are understood in a broader sense.
I think we should make a strong point about it.
(In the Introduction?)}
}\fi
\begin{remark}
Unlike the case of semismooth and $C^1$ perturbations, where Theorem~\ref{ThRadLip} establishes the exact formula for the radius, in the case of more general Lipschitz perturbations the theorem gives only lower and upper bounds for the respective radius, which, in view of Proposition~\ref{P3}(ii), differ by a factor of at most 2.
We do not know if these bounds are sharp.
Obtaining sharp bounds is an
interesting problem for future research
\end{remark}

By using the first inequality in \eqref{8} and Proposition~\ref{P3}(i),
one obtains additional bounds for the radii of subregularity, as stated in the following corollary.

\begin{corollary}%\label{ThRadLip}
Consider a mapping $F:\R^n \rightrightarrows \R^m$
with closed graph
and a point ${(\bx,\by)\in\gph F}$.
Then
%AK14/04/18
\begin{enumerate}
\item
{$\rad_{Lip}F(\bx\for\by)\le\rgo$};
\item
$\rad_{ss}F(\bx\for\by)=\rad_{C^1}F(\bx\for\by) \ge\rgd
{\ge\rg}$.
\end{enumerate}
\end{corollary}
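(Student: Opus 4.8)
The plan is to obtain both assertions as immediate consequences of the exact radius formula \eqref{1} for semismooth and $C^1$ perturbations, combined with the elementary monotonicity \eqref{8} of the radii under shrinking the class of perturbations and the ordering of the regularity constants in Proposition~\ref{P3}(i). No new estimates are needed; the work has already been done in Theorem~\ref{ThRadLip} and Proposition~\ref{P3}.

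For part (i), I would start from the equality $\rad_{ss}F(\bx\for\by)=\rgo$ recorded in \eqref{1}, and then apply the first inequality in \eqref{8}, namely $\rad_{Lip}F(\bx\for\by)\le\rad_{ss}F(\bx\for\by)$, which holds because $\mathcal{F}_{ss}\subset\mathcal{F}_{Lip}$ so that the infimum defining $\rad_{Lip}$ is taken over a larger set. Chaining these two facts gives $\rad_{Lip}F(\bx\for\by)\le\rad_{ss}F(\bx\for\by)=\rgo$, which is exactly the claim.

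For part (ii), the equality $\rad_{ss}F(\bx\for\by)=\rad_{C^1}F(\bx\for\by)$ is literally the content of \eqref{1}, and by the same identity this common value equals $\rgo$. It then remains only to bound $\rgo$ from below, and Proposition~\ref{P3}(i) supplies precisely $\rgo\ge\rgd\ge\rg$; substituting $\rgo=\rad_{ss}F(\bx\for\by)=\rad_{C^1}F(\bx\for\by)$ yields the two stated inequalities simultaneously.

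There is essentially no obstacle in this argument; the only point that requires a little care is keeping track of the directions of the inequalities. In particular one must note that $\rg$ and $\rgd$ appear in Proposition~\ref{P3}(i) as \emph{lower} bounds for $\rgo$, hence — once the exact formula $\rad_{ss}F(\bx\for\by)=\rad_{C^1}F(\bx\for\by)=\rgo$ is invoked — they become lower bounds for the semismooth and $C^1$ radii, whereas in part (i) it is the reverse monotonicity of the radii (not of the constants) that is being used to turn $\rgo$ into an upper bound for $\rad_{Lip}F(\bx\for\by)$.
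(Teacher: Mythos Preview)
Your argument is correct and follows exactly the route indicated in the paper: the corollary is deduced from the first inequality in \eqref{8} together with Proposition~\ref{P3}(i), applied to the exact formula \eqref{1} of Theorem~\ref{ThRadLip}. There is nothing to add.
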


\if{
%AK12/02/18
In view of
%AK3/03/18
parts (iii) and (iv) of Proposition~\ref{P3}
and the known representations of the radii of metric regularity and strong subregularity \cite[Theorems~6A.7, 4C.2, 6A.9 and 4E.1]{DonRoc14}, the next natural statement, providing relationships between the radius of subregularity and those of metric regularity and strong subregularity, is a consequence of Theorem~\ref{ThRadLip}.
The relationships recapture the well known fact that subregularity is the weakest of the three mentioned properties.
\begin{corollary}\label{P7}
Consider a mapping $F:\R^n \rightrightarrows \R^m$ with closed graph and a point $(\bx,\by)\in\gph F$.
Then
\[\rad_{Lip}F(\bx\for\by)\geq \max\big\{{\rm rad[MR]}F(\bx\for\by),{\rm rad[SSR]}F(\bx\for\by)\big\}.\]
\end{corollary}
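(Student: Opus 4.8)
The plan is to read the inequality off the lower bound of Theorem~\ref{ThRadLip} combined with parts (iii) and (iv) of Proposition~\ref{P3}, using the standard coderivative and graphical-derivative characterizations of metric regularity and strong metric subregularity together with the radius theorems for those two properties.

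First I would invoke the left inequality in \eqref{EqRadLip}, which holds for any $F$ with closed graph and any $(\bx,\by)\in\gph F$, to write $\rad_{Lip}F(\bx\for\by)\ge\rg$. Proposition~\ref{P3}(iii) then gives
\[\rg\ge\inf\{\norm{u^*}_*\mid u^\ast\in\overline{D}^*F(\bx,\by)(v^*),\ \norm{v^*}_*=1\},\]
and by the Mordukhovich coderivative criterion \cite[Theorem~4C.2]{DonRoc14} the right-hand side equals $1/\reg(F;\bx\for\by)$, with the usual convention that this value is $0$ when $F$ fails to be metrically regular at $\bx$ for $\by$. Since $1/\reg(F;\bx\for\by)={\rm rad[MR]}F(\bx\for\by)$ by the radius theorem for metric regularity \cite[Theorem~6A.7]{DonRoc14}, we obtain $\rad_{Lip}F(\bx\for\by)\ge{\rm rad[MR]}F(\bx\for\by)$. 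In exactly the same way, Proposition~\ref{P3}(iv) yields $\rg\ge\inf\{\norm{v}\mid v\in DF(\bx,\by)(u),\ \norm{u}=1\}$, which equals $1/\subreg(F;\bx\for\by)$ by the graphical-derivative characterization of strong metric subregularity \cite[Theorem~4E.1]{DonRoc14} and hence equals ${\rm rad[SSR]}F(\bx\for\by)$ by \cite[Theorem~6A.9]{DonRoc14}; thus $\rad_{Lip}F(\bx\for\by)\ge{\rm rad[SSR]}F(\bx\for\by)$. Taking the maximum of the two bounds gives the claim. (Equivalently, one may simply quote the paragraph preceding the corollary, which already records that $\rg$ is an upper bound for both radii, and combine it with \eqref{EqRadLip}.)

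I do not expect a real obstacle here, since the statement is essentially a repackaging of results already at hand. The only points that need care are the bookkeeping with the $+\infty/0$ conventions for the moduli --- in particular, in the degenerate case where $F$ is not metrically subregular at $\bx$ for $\by$, all of $\rad_{Lip}F(\bx\for\by)$ and the two radii on the right-hand side are $0$, so the inequality holds trivially --- and the observation that Theorem~\ref{ThRadLip} and Proposition~\ref{P3} are stated without presupposing subregularity of $F$, so that the chain of inequalities is valid unconditionally.
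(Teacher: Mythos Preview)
Your proof is correct and follows essentially the same route as the paper: the sentence introducing Corollary~\ref{P7} derives it from Theorem~\ref{ThRadLip} via parts (iii) and (iv) of Proposition~\ref{P3} together with \cite[Theorems~4C.2, 4E.1, 6A.7 and 6A.9]{DonRoc14}, exactly as you do. One minor remark: the paper also records (in an internal note) the even more direct argument that, since metric regularity and strong subregularity each imply metric subregularity, any Lipschitz perturbation destroying subregularity destroys the stronger property as well, so the inequality between the radii follows straight from the definitions without invoking $\rg$ at all; your derivation, however, matches the paper's stated justification.
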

The above corollary does not assume that $F$ is subregular at $\bx$ for $\by$.
If it is not, the conclusion is of course trivial as all the three radii in this case equal 0.
}\fi
\if{\HG{4/03/18.
This corollary holds trivially true because both metric regularity and strong subregularity imply subregularity.

\red{AK10/03/18.
Yes, this is what the sentence before Corollary~\ref{P7} says.}

But we also know
\[\rad_{Lip}F(\bx\for\by)\geq \rg\geq\max\big\{{\rm rad[MR]}F(\bx\for\by),{\rm rad[SSR]}F(\bx\for\by)\big\},\]
i.e. that the lower bound for the radius of subregularity is greater than the radii of metric regularity and strong subregularity.

\red{AK10/03/18.
Yes, this is Theorem~\ref{ThRadLip} plus Proposition~\ref{P3}.
They produce Corollary~\ref{P7}.}

\red{The radius theorem recaptures known results.
I think this is good.}
}}\fi
\if{\AK{23/01/18.
I am not quite happy with the long notations for the radii in the above theorem.
Could you think of something better?
In general, any remarks/objections concerning this form of the theorem?

It could make sense to introduce a special notation also for the last expression in \eqref{1} and try to obtain a reasonable interpretation for it making \eqref{1} an analogue of \eqref{radreg}.
Does the fact that this expression is positive constitute a kind of regularity of $F$?}}\fi

\if{Equation \eqref{1} can be written as
\begin{multline*}
\rad_{ss}F(\bx\for\by) =\rad_{C^1}F(\bx\for\by)
\\
= \inf_{B\in \R^{m\times n}} \bigl\{\|B\| \mid  B^Tv^*=u^\ast,\; -Bu=v,
\\
( u^*,v^*) \in\overline{N}_{\gph F}((\bx,\by);(u,v)),\; (u, v^*) \in\Sp_{\R^n}\times\Sp_{\R^m}\},
\end{multline*}
Multiplying the relations $B^Tv^*=u^\ast$  and $-Bu=v$ with $u^T$ and ${v^*}^T$, respectively, we obtain $u^T B^Tv^*=u^Tu^\ast$ and  $-{v^*}^TBu={v^*}^T$. By taking into account ${v^*}^TBu=u^TB^Tv^*$ this implies ${u^*}^Tu+{v^*}^Tv=0$ and we obtain
\begin{multline}\label{EqRadSSMod}
\rad_{ss}F(\bx\for\by) =\rad_{C^1}F(\bx\for\by)
\\
= \inf \bigl\{\|B\| \mid  B^Tv^*=u^\ast,\; -Bu=v,\;( u^*,v^*) \in\overline{N}_{\gph F}((\bx,\by);(u,v)),
\\
 {u^*}^Tu+{v^*}^Tv=0,\; (u, v^*) \in\Sp_{\R^n}\times\Sp_{\R^m}\}.
\end{multline}

\HG{24/05/18.
I think we should emphasize that the lower and upper bound for $\rad_{Lip}F(\bx\for\by)$ differ by a factor of at most 2 and are therefore sharp. In my opinion this is an important issue for conditioning.}
}\fi
%AK14/04/18
{In accordance with Theorem~\ref{ThRadLip}, condition $\rg>0$ guarantees that $F$ is metrically subregular at $\bx$ for $\by$ together with all its perturbations by Lipschitz continuous functions with small Lipschitz modulus, while condition $\rgo>0$ plays a similar role with respect to semismooth and $C^1$ perturbations of $F$.
In fact, both conditions correspond to certain regularity properties of $F$ at $\bx$ for $\by$ being stronger than conventional metric subregularity and, in view of Proposition~\ref{P3}(iv) and the well-known graphical derivative criterion for strong metric subregularity \cite[Theorem~4E.1]{DonRoc14}, weaker than strong metric subregularity.
\AK{25/05/18.
Could it make sense to give a name to the regularity property of $F$ at $(\bx,\by)$ determined by the inequality $\rg>0$?
\emph{Firmly subregular}?}
%AK28/02/18
%\begin{remark}%\label{P3}
Formula \eqref{1} agrees with the pattern of \eqref{radreg+} with
%the quantity \eqref{rgo}
$\rgo$
playing the role of
%(the reciprocal of)
the regularity `modulus' rg.
%\end{remark}
Note that the mentioned regularity properties, despite possessing certain stability with respect to small perturbations, are not `robust': they can be violated in a neighbourhood of the reference point $(\bx,\by)$; see the example in Section~\ref{S4}.
}

%\subsection{Euclidean and Frobenius norms}
Computing $\rgo$ using
\eqref{rgo} and
%AK14/04/18
%the second expression in
\eqref{DF2}
involves minimization over five parameters: four vectors $u,v,u^*,v^*$ and a matrix $B$.
The number of parameters could be reduced by eliminating the matrix if for given $u,v,u^*,v^*$,
satisfying ${u^*}^Tu={v^*}^Tv$ and $\|u\|=\|v^*\|_*=1$,
we were able to solve analytically the problem
\begin{gather*}%\label{27}
\min_{B\in L(\R^n,\R^m)} \norm{B}\mbox{ subject to } B^Tv^*=u^\ast,\; Bu=v,
\end{gather*}
where $\norm{B}$ denotes
%some
the
operator norm.

Currently we know
%an
the
explicit solution to this problem only for the Frobenius norm $\norm{B}_F$
in the case when $\R^n$ and $\R^m$ are considered with the Euclidean norms.
Specifically, the next proposition deals with the convex constrained optimization problem
\begin{equation}\label{EqFrobMinProbl}
\min \frac 12\norm{B}_F^2 \mbox{ subject to } B^Tv^*=u^\ast,\;
Bu=v.
\end{equation}

\begin{proposition}\label{PrFrob}
Let vectors $u,u^*\in\R^n$ and $v,v^*\in\R^m$ satisfy conditions
\begin{gather}\label{PrFrob-1}
{u^*}^Tu={v^*}^Tv,\quad\norm{u}_2=\norm{v^\ast}_2=1, \end{gather}
where $\norm{\cdot}_2$ denotes the Euclidean norm.
The unique minimizer of the problem \eqref{EqFrobMinProbl} is given by
the matrix
\[\overline{B}:= v^\ast{u^\ast}^T
+
vu^T-({u^\ast}^Tu)v^\ast u^T,\]
and in this case,
\begin{equation}\label{EqFrobMinProb2}
\norm{\overline{B}}_F^2= \norm{u^\ast}_2^2+\norm{v}_2^2-\big({u^\ast}^Tu\big)^2.
\end{equation}
\end{proposition}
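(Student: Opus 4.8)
The plan is to solve the equality-constrained quadratic minimization problem \eqref{EqFrobMinProbl} by the standard Lagrange multiplier method, exploiting the fact that the objective $\tfrac12\|B\|_F^2$ is strictly convex and the two linear constraints $B^Tv^*=u^*$ and $Bu=v$ are affine (and, under \eqref{PrFrob-1}, consistent — by the compatibility condition ${u^*}^Tu={v^*}^Tv$, exactly as in the proof of Proposition~\ref{P4}). Strict convexity plus feasibility will give existence and uniqueness of the minimizer, so it suffices to exhibit one feasible $B$ satisfying the first-order optimality (KKT) conditions. First I would identify $L(\R^n,\R^m)$ with $\R^{mn}$ equipped with the inner product $\langle A,B\rangle = \tr(A^TB)$, so that the gradient of $\tfrac12\|B\|_F^2$ is simply $B$ itself.

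Next I would set up the Lagrangian. The constraint $Bu=v$ is a system of $m$ scalar equations, paired with a multiplier $p\in\R^m$; the constraint $B^Tv^*=u^*$ is a system of $n$ scalar equations, paired with a multiplier $q\in\R^n$. Writing the stationarity condition $\nabla_B\bigl(\tfrac12\|B\|_F^2 - p^T(Bu-v) - q^T(B^Tv^*-u^*)\bigr)=0$ and computing the derivatives (using $\nabla_B(p^TBu)=pu^T$ and $\nabla_B(q^TB^Tv^*)=v^*q^T$), stationarity becomes $B = pu^T + v^*q^T$. Then I would substitute this ansatz back into the two constraints: $Bu = (u^Tu)p + (q^Tu)v^* = p + (q^Tu)v^* = v$ (using $\|u\|_2=1$), and $B^Tv^* = (p^Tv^*)u + (v^{*T}v^*)q = (p^Tv^*)u + q = u^*$ (using $\|v^*\|_2=1$). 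This yields the linear system $p = v - (q^Tu)v^*$ and $q = u^* - (p^Tv^*)u$ for the multipliers. Solving it — substitute one into the other, take the relevant inner products to get scalars, and use ${u^*}^Tu={v^*}^Tv$ — should produce $p=v$, $q=u^*-({u^*}^Tu)u$, hence $B = vu^T + v^*({u^*}^T - ({u^*}^Tu)u^T) = v^*{u^*}^T + vu^T - ({u^*}^Tu)v^*u^T = \overline{B}$.

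With $\overline B$ in hand I would verify directly (independently of the Lagrange computation) that it is feasible: $\overline B u = v^*({u^*}^Tu) + v(u^Tu) - ({u^*}^Tu)v^*(u^Tu) = v^*({u^*}^Tu) + v - ({u^*}^Tu)v^* = v$, and $\overline B^Tv^* = u^*(v^{*T}v^*) + u(v^Tv^*) - ({u^*}^Tu)u(v^{*T}v^*) = u^* + ({v^*}^Tv)u - ({u^*}^Tu)u = u^*$, the last equality again by \eqref{PrFrob-1}. Since $\overline B$ is feasible and satisfies the first-order conditions of a strictly convex program, it is the unique global minimizer. Finally, to get \eqref{EqFrobMinProb2} I would compute $\|\overline B\|_F^2 = \tr(\overline B^T\overline B)$ by expanding the three-term sum, using $\|u\|_2=\|v^*\|_2=1$ repeatedly and collecting: the cross terms combine with the squared terms to collapse to $\|u^*\|_2^2 + \|v\|_2^2 - ({u^*}^Tu)^2$. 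A slicker route to the same value: since $B=\overline B$ is stationary with multipliers $p=v$, $q=u^*-({u^*}^Tu)u$, one has $\|\overline B\|_F^2 = \langle \overline B, pu^T + v^*q^T\rangle = p^T(\overline B u) + (\overline B^Tv^*)^Tq = v^Tv + {u^*}^T(u^* - ({u^*}^Tu)u) = \|v\|_2^2 + \|u^*\|_2^2 - ({u^*}^Tu)^2$.

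The main obstacle is bookkeeping rather than conceptual: one must track the two multiplier vectors and the several scalar inner products $q^Tu$, $p^Tv^*$, ${u^*}^Tu$, ${v^*}^Tv$ carefully, and the crucial place where the hypothesis ${u^*}^Tu={v^*}^Tv$ enters is in making the two constraints mutually consistent and in simplifying $\overline B^Tv^*$ to $u^*$. I would also take care to state why a minimizer exists at all (coercivity of $\tfrac12\|B\|_F^2$ on the nonempty closed affine feasible set) before invoking uniqueness, so the Lagrange conditions are genuinely necessary and sufficient.
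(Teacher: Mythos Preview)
Your proposal is correct and follows essentially the same route as the paper: verify feasibility of $\overline B$, check that it satisfies the KKT conditions of the strictly convex problem \eqref{EqFrobMinProbl}, and conclude uniqueness. The paper simply exhibits a pair of multipliers and checks stationarity, whereas you derive them by substituting the ansatz $B=pu^T+v^*q^T$ back into the constraints; note that the multiplier system is actually underdetermined (shifting $p\mapsto p+\alpha v^*$, $q\mapsto q-\alpha u$ leaves $B$ unchanged), so your ``solving it'' really means picking one solution of a one-parameter family --- which is all that is needed. Your alternative computation of $\|\overline B\|_F^2$ via $\langle \overline B,\,pu^T+v^*q^T\rangle=p^T(\overline Bu)+(\overline B^Tv^*)^Tq$ is slicker than the paper's direct trace expansion.
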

\begin{proof}
The feasibility of $\overline{B}$ in the problem \eqref{EqFrobMinProbl} can be shown by straightforward calculations:
\begin{gather*}
\overline{B}^Tv^\ast=\left(u^\ast {v^\ast}^T+uv^T -({u^\ast}^Tu)u{v^\ast}^T\right)v^\ast
=\norm{v^\ast}_2^2 u^\ast+\left(v^Tv^\ast-({u^\ast}^Tu) \norm{v^\ast}_2^2\right)u =u^\ast,\\
\overline{B}u=(v^\ast{u^\ast}^T+vu^T-({u^\ast}^Tu)v^\ast u^T)u
=({u^\ast}^Tu)v^\ast+\norm{u}_2^2 v -({u^\ast}^Tu)\norm{u}_2^2 v^\ast=v.
\end{gather*}
Furthermore, $\overline{B}$ satisfies the first-order KKT condition for problem \eqref{EqFrobMinProbl} with multipliers $\eta=-u^\ast$ and $\eta^\ast=({u^\ast}^Tu)v^\ast-v$; indeed:
\[\overline{B}+v^\ast\eta^T+\eta^\ast u^T =v^\ast{u^\ast}^T+vu^T-({u^\ast}^Tu)v^\ast u^T -v^\ast {u^*}^T+(({u^\ast}^Tu)v^\ast-v)u^T=0.\]
Since \eqref{EqFrobMinProbl} is a strictly convex program, our claim about the optimality of $\overline{B}$ is verified.
%In order to complete the proof observe that
Next we show \eqref{EqFrobMinProb2}.
\begin{align*}\nonumber
\norm{\overline{B}}_F^2 &=\tr\left(\overline{B}^T\overline{B}\right)
\\\nonumber
&=\tr\Big(\big(u^\ast{v^\ast}^T+uv^T -({u^\ast}^Tu)u{v^\ast}^T\big)
\big(v^\ast{u^\ast}^T+vu^T - ({u^\ast}^Tu)v^\ast u^T\big)\Big)
\\\nonumber
&=\tr\big(\norm{v^\ast}_2^2 u^\ast{u^\ast}^T +({v^*}^Tv)u{u^*}^T-\norm{v^\ast}_2^2({u^\ast}^Tu)u {u^\ast}^T
\\\nonumber
&\quad+({v^\ast}^Tv) u^\ast u^T +\norm{v}_2^2 uu^T -({u^\ast}^Tu)({v^\ast}^Tv)uu^T
\\\nonumber
&\quad-\norm{v^\ast}_2^2({u^\ast}^Tu)u{u^\ast}^T -({u^\ast}^Tu)({v^*}^Tv)uu^T +({u^\ast}^Tu)^2\norm{v^\ast}_2^2uu^T\big)
\\\nonumber
&=\tr\Big(\norm{v^\ast}_2^2u^\ast{u^\ast}^T +\big(({v^*}^Tv)-2\norm{v^\ast}_2^2({u^\ast}^Tu)\big) u{u^\ast}^T
\\\nonumber
&\quad+({v^\ast}^Tv) u^\ast u^T +\big(\norm{v}_2^2 -({u^\ast}^Tu) \big(2({v^\ast}^Tv) -({u^\ast}^Tu)\norm{v^\ast}_2^2\big)\big)uu^T\Big)
\\\nonumber
&=\norm{v^\ast}_2^2\norm{u^*}_2^2 +\big(({v^*}^Tv)-2\norm{v^\ast}_2^2({u^\ast}^Tu)\big) ({u^\ast}^T u)
\\\nonumber
&\quad+({v^\ast}^Tv)({u^\ast}^T u) +\big(\norm{v}_2^2 -({u^\ast}^Tu) \big(2({v^\ast}^Tv)-({u^\ast}^Tu)\norm{v^\ast}_2^2\big)\big)\norm{u}_2^2. \end{align*}
In view of \eqref{PrFrob-1}, we get
\begin{align*}\nonumber
\norm{\overline{B}}_F^2&=\norm{u^*}_2^2 -({u^\ast}^Tu)^2
+({v^\ast}^Tv)({u^\ast}^Tu) +\norm{v}_2^2 -({u^\ast}^Tu)({v^\ast}^Tv)
\\\nonumber
&=\norm{u^*}_2^2 -({u^\ast}^Tu)^2
+\norm{v}_2^2.
\end{align*}
The proof is complete.
\qed\end{proof}

%AK3/03/18
In view of the above proposition, in the Euclidean space setting the following analogue (upper bound) of the quantity \eqref{rgo} can be used for estimating the radius of subregularity:
\begin{multline}\label{rgdag}
\rgdag:=\inf \Bigl\{\sqrt{\norm{u^\ast}_2^2+\norm{v}_2^2-({u^*}^Tu)^2} \mid
(u^*,v)\in\widehat{D}F(\bx,\by)(u,v^*),\\
{u^*}^Tu={v^*}^Tv,\;
\|u\|_2=\|v^*\|_2=1
\Bigr\}.
\end{multline}
\if{
\HG{24/05/18.
Similar to my suggestion above we could rename $\rgdag$ by $\overline{{\rm rg}}^\circ[F](\bx,\by)$.}
}\fi
The next proposition provides relationships between this new quantity and \eqref{rgo}.
\begin{proposition}\label{PropBndRadSS}
Consider a mapping $F:\R^n \rightrightarrows \R^m$
and a point $(\bx,\by)\in\gph F.$
If both $\R^m$ and $\R^n$ are equipped with the Euclidean norm, then
%the bound
\begin{gather*}
%\rad_{ss} F(\bx\for\by)
\rgo
\leq \rgdag\leq\sqrt{2}
%\xi_{ss}
\,\rgo.
\end{gather*}
%also holds.
\end{proposition}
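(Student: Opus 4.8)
The plan is to compare the two infima termwise, using the fact that both are taken over the same constraint set, namely all quadruples $(u,v^*,u^*,v)$ with $(u^*,v)\in\widehat{D}F(\bx,\by)(u,v^*)$, $\|u\|_2=\|v^*\|_2=1$ and ${u^*}^Tu={v^*}^Tv$, together with (in the case of $\rgo$) a choice of matrix $B$ with $B^Tv^*=u^*$ and $Bu=v$. Recall that by Proposition~\ref{P4} the set $\widehat{\mathfrak{D}}^\circ F(\bx,\by)$ has the two representations \eqref{DF1} and \eqref{DF2}, so the constraint ``$\exists B$ with $B^Tv^*=u^*$, $Bu=v$'' is automatically satisfiable precisely when ${u^*}^Tu={v^*}^Tv$. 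Hence $\rgo$ and $\rgdag$ range over exactly the same feasible quadruples; what differs is only the value being minimized. For $\rgdag$ the value is $\sqrt{\norm{u^\ast}_2^2+\norm{v}_2^2-({u^*}^Tu)^2}$, while for $\rgo$ the value is $\inf_B\norm{B}$ (operator norm) over admissible $B$.

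For the lower bound $\rgo\le\rgdag$: fix a feasible quadruple and let $\overline B$ be the matrix from Proposition~\ref{PrFrob}, which is admissible (it satisfies $\overline B^Tv^*=u^*$ and $\overline B u=v$). Since the operator (spectral) norm is dominated by the Frobenius norm, $\norm{\overline B}\le\norm{\overline B}_F=\sqrt{\norm{u^\ast}_2^2+\norm{v}_2^2-({u^*}^Tu)^2}$ by \eqref{EqFrobMinProb2}. Thus for every feasible quadruple the quantity minimized in $\rgo$ is $\le$ the quantity minimized in $\rgdag$; taking infima gives $\rgo\le\rgdag$.

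For the upper bound $\rgdag\le\sqrt2\,\rgo$: fix a feasible quadruple and let $B$ be \emph{any} admissible matrix (i.e.\ $B^Tv^*=u^*$, $Bu=v$). I need to bound $\sqrt{\norm{u^\ast}_2^2+\norm{v}_2^2-({u^*}^Tu)^2}$ in terms of $\norm{B}$. Since $u^*=B^Tv^*$ and $\norm{v^*}_2=1$, we get $\norm{u^*}_2\le\norm{B^T}\,\norm{v^*}_2=\norm{B}$; since $v=Bu$ and $\norm{u}_2=1$, we get $\norm{v}_2\le\norm{B}$. Therefore $\norm{u^\ast}_2^2+\norm{v}_2^2-({u^*}^Tu)^2\le\norm{u^\ast}_2^2+\norm{v}_2^2\le2\norm{B}^2$, so $\sqrt{\norm{u^\ast}_2^2+\norm{v}_2^2-({u^*}^Tu)^2}\le\sqrt2\,\norm{B}$. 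Since this holds for every admissible $B$, the left side is $\le\sqrt2\,\inf_B\norm{B}$; taking the infimum over feasible quadruples yields $\rgdag\le\sqrt2\,\rgo$.

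The only subtlety — and the step I would be most careful about — is the bookkeeping over degenerate cases: if $\rgo=+\infty$ (no feasible quadruple, i.e.\ $\widehat{\mathfrak{D}}^\circ F(\bx,\by)=\emptyset$) then $\rgdag=+\infty$ as well and both inequalities hold trivially; and one should confirm the feasible sets for $\rgo$ and $\rgdag$ genuinely coincide, which is exactly the content of Proposition~\ref{P4}. No genuine obstacle is expected; the argument is a termwise comparison plus the elementary norm inequalities $\norm{B^Tv^*}_2\le\norm{B}$, $\norm{Bu}_2\le\norm{B}$ and $\norm{\cdot}_{\mathrm{op}}\le\norm{\cdot}_F$.
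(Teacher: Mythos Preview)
Your proof is correct and follows essentially the same route as the paper: the lower bound via Proposition~\ref{PrFrob} together with $\norm{\cdot}_{\mathrm{op}}\le\norm{\cdot}_F$, and the upper bound via the elementary estimate $\sqrt{\norm{u^\ast}_2^2+\norm{v}_2^2-({u^*}^Tu)^2}\le\sqrt{2}\max\{\norm{u^\ast}_2,\norm{v}_2\}$. The only cosmetic difference is that for the second inequality the paper states this last estimate and implicitly invokes $\rgd\le\rgo$ (Proposition~\ref{P3}(i)), whereas you unfold that step by bounding $\norm{u^*}_2$ and $\norm{v}_2$ directly by $\norm{B}$; the content is identical.
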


\if{\AK{1/03/18.
The infimum in the middle looks like another `regularity' constant.
Should it be introduced?}}\fi

\begin{proof}
The first inequality is a consequence of Proposition~\ref{PrFrob} since
the spectral norm of a matrix, i.e. the operator norm with respect to the Euclidean norm, is always less than or equal to the Frobenius norm.
The second inequality follows immediately from the estimate $\sqrt{\norm{u^\ast}_2^2+\norm{v}_2^2-({u^*}^Tu)^2}\leq \sqrt{2}\max\{\norm{u^\ast}_2,\norm{v}_2\}$.
\qed\end{proof}

\if{
\AK{22/02/18.
Should the statements for the radii be formulated explicitly as corollaries?}
}\fi

\begin{corollary}%\label{ThRadLip}
If both $\R^m$ and $\R^n$ are equipped with the Euclidean norm, then
\begin{enumerate}
\item
%AK30/03/18
$\rad_{Lip}F(\bx\for\by)\le\rgdag$;
\item
%AK24/03/18
$\frac{1}{\sqrt{2}}\rgdag \le\rad_{ss}F(\bx\for\by)=\rad_{C^1}F(\bx\for\by)
\le\rgdag$.
\end{enumerate}
\end{corollary}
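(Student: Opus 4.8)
The corollary is a routine consequence of three facts already in hand: the exact radius formula $\rad_{ss}F(\bx\for\by)=\rad_{C^1}F(\bx\for\by)=\rgo$ from Theorem~\ref{ThRadLip}, the inequality $\rad_{Lip}F(\bx\for\by)\le\rgo$ from the corollary stated just before Proposition~\ref{PrFrob}, and the two-sided comparison $\rgo\le\rgdag\le\sqrt{2}\,\rgo$ from Proposition~\ref{PropBndRadSS}, which holds precisely under the Euclidean-norm hypothesis assumed here. So the plan is to chain these three statements; the only point needing attention is that every invoked result be valid in the Euclidean setting, which is exactly the standing assumption of the corollary.

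Concretely, I would first record, from Proposition~\ref{PropBndRadSS},
\[
\rgo\le\rgdag\le\sqrt{2}\,\rgo.
\]
For part (i), I would then combine the bound $\rad_{Lip}F(\bx\for\by)\le\rgo$ (from the previous corollary) with the left-hand inequality above to obtain $\rad_{Lip}F(\bx\for\by)\le\rgo\le\rgdag$, which is the desired claim.

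For part (ii), Theorem~\ref{ThRadLip} gives $\rad_{ss}F(\bx\for\by)=\rad_{C^1}F(\bx\for\by)=\rgo$. Substituting this into $\rgo\le\rgdag$ yields the upper bound $\rad_{ss}F(\bx\for\by)\le\rgdag$, and substituting it into $\rgdag\le\sqrt{2}\,\rgo$ gives $\rgdag\le\sqrt{2}\,\rad_{ss}F(\bx\for\by)$, i.e. $\frac{1}{\sqrt{2}}\rgdag\le\rad_{ss}F(\bx\for\by)$. There is no genuine difficulty here: the real work was carried out in Proposition~\ref{PrFrob}, Proposition~\ref{PropBndRadSS} and Theorem~\ref{ThRadLip}, so the ``hard part'' is purely organizational — ensuring the Euclidean hypothesis is in force so that the Frobenius-norm minimization underlying the definition of $\rgdag$ is legitimate.
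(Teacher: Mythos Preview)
Your proposal is correct and matches the paper's approach: the corollary is stated without proof in the paper, being an immediate consequence of Theorem~\ref{ThRadLip}, the preceding corollary ($\rad_{Lip}F(\bx\for\by)\le\rgo$), and Proposition~\ref{PropBndRadSS}, exactly as you chain them.
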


\section{Applications to constraint systems}\label{S4}

\AK{6/06/18.
Should $G$ be lowercase in accordance with the convention in the Preliminaries?}

Consider the constraint system
\begin{gather}\label{4.1}
x\in D,\quad
g(x)\in K,
\end{gather}
where $D\subset\R^n$, $K\subset\R^m$, $g:\R^n\to\R^m$, $\bx\in D$ and $g(\bx)\in K$.
\if{\AK{24/01/18:
I have changed the notations in Jiri's notes to make them comply with those in the preceding section.
Shall we change the notations in the preceding section instead?}}\fi
The inclusions \eqref{4.1} can be equivalently written as $0 \in F(x)$, where
\begin{gather}\label{4.2}
F(x):=
\begin{cases}
K-g(x)&\mbox{if } x\in D,
\\
\emptyset&\mbox{otherwise}.
\end{cases}
\end{gather}
Observe that $\by:=0\in F(\bx)$.

Before we apply our theory to the \SVM\ $F$ given by (\ref{4.2}),
%AK6.06.18
%note that,
{we recall two facts used in the proof of Proposition~\ref{P2} below.
The first one comes from \cite[Proposition~3.2]{YeZho18}.

\begin{lemma}\label{L14}
Given two sets $A_{1}\subset \mathbb{R}^{n}$ and $A_{2}\subset \mathbb{R}^{m}$, a point $\bar{x}=(\bar{x}_{1}, \bar{x}_{2})\in A_{1} \times A_{2}$ and a direction $u=(u_{1}, u_{2})\in\mathbb{R}^{n} \times \mathbb{R}^{m}$, one has the inclusion
\[
\overline{N}_{A_{1} \times A_{2}}(\bar{x};u) \subset \overline{N}_{A_{1}}(\bar{x}_{1}; u_{1}) \times \overline{N}_{A_{2}}(\bar{x}_{2}; u_{2}).
\]
This inclusion becomes equality provided that either $A_{1}$ is directionally regular at $\bar{x}_{1}$ in the direction $u_{1}$ or $A_{2}$ is directionally regular at $\bar{x}_{2}$ in the direction $u_{2}$.
\end{lemma}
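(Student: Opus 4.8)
The plan is to deduce Lemma~\ref{L14} from the elementary product formula for the Fr\'echet normal cone,
\[
N_{A_1\times A_2}(z_1,z_2)=N_{A_1}(z_1)\times N_{A_2}(z_2)\qquad(z_1\in A_1,\ z_2\in A_2),
\]
which I would record first. Since $N_A$ is the polar of $T_A$, the inclusion $T_{A_1\times A_2}(z_1,z_2)\subset T_{A_1}(z_1)\times T_{A_2}(z_2)$ gives, after polarity, $N_{A_1}(z_1)\times N_{A_2}(z_2)\subset N_{A_1\times A_2}(z_1,z_2)$; for the reverse inclusion one observes that $(w_1,0)\in T_{A_1\times A_2}(z_1,z_2)$ for every $w_1\in T_{A_1}(z_1)$ (because $z_2+t\cdot 0=z_2\in A_2$) and symmetrically, so any pair polar to $T_{A_1\times A_2}(z_1,z_2)$ has each component polar to the respective factor's tangent cone. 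This makes the argument self-contained.

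For the inclusion claimed in the lemma, I would take $(x_1^*,x_2^*)\in\overline{N}_{A_1\times A_2}(\bar x;u)$ and choose generating sequences $t_i\searrow0$, $(u_{1,i},u_{2,i})\to(u_1,u_2)$, $(x_{1,i}^*,x_{2,i}^*)\to(x_1^*,x_2^*)$ with $(x_{1,i}^*,x_{2,i}^*)\in N_{A_1\times A_2}(\bar x_1+t_iu_{1,i},\bar x_2+t_iu_{2,i})$. By the product formula this splits as $x_{1,i}^*\in N_{A_1}(\bar x_1+t_iu_{1,i})$ and $x_{2,i}^*\in N_{A_2}(\bar x_2+t_iu_{2,i})$ for all $i$; letting $i\to\infty$ yields $x_1^*\in\overline{N}_{A_1}(\bar x_1;u_1)$ and $x_2^*\in\overline{N}_{A_2}(\bar x_2;u_2)$. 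The degenerate situations in which $u_1\notin T_{A_1}(\bar x_1)$ or $u_2\notin T_{A_2}(\bar x_2)$ need no separate treatment, since then the corresponding directional normal cone is empty and the product on the right is empty as well.

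For the equality, assume $A_1$ is directionally regular at $\bar x_1$ in the direction $u_1$ (the $A_2$-case being symmetric) and take $(x_1^*,x_2^*)\in\overline{N}_{A_1}(\bar x_1;u_1)\times\overline{N}_{A_2}(\bar x_2;u_2)$. The crux is the order in which generating sequences are produced: first, since $x_2^*\in\overline{N}_{A_2}(\bar x_2;u_2)$, fix sequences $t_i\searrow0$, $u_{2,i}\to u_2$, $x_{2,i}^*\to x_2^*$ with $x_{2,i}^*\in N_{A_2}(\bar x_2+t_iu_{2,i})$; then feed \emph{this particular} null sequence $(t_i)$ into the definition of directional regularity of $A_1$, which, because $x_1^*\in\overline{N}_{A_1}(\bar x_1;u_1)$, furnishes $u_{1,i}\to u_1$ and $x_{1,i}^*\to x_1^*$ with $x_{1,i}^*\in N_{A_1}(\bar x_1+t_iu_{1,i})$ \emph{for the same} $(t_i)$. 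Applying the product formula once more gives $(x_{1,i}^*,x_{2,i}^*)\in N_{A_1\times A_2}((\bar x_1,\bar x_2)+t_i(u_{1,i},u_{2,i}))$ with $(u_{1,i},u_{2,i})\to u$ and $(x_{1,i}^*,x_{2,i}^*)\to(x_1^*,x_2^*)$, hence $(x_1^*,x_2^*)\in\overline{N}_{A_1\times A_2}(\bar x;u)$. I expect this matching of sequences to be the main (and essentially the only) obstacle: directional regularity is exactly the hypothesis allowing one to prescribe $(t_i)$ in advance on the $A_1$-side so as to align it with the sequence already chosen on the $A_2$-side, whereas without it one would be left with two possibly incompatible null sequences. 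Everything else is routine.
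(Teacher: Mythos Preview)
The paper does not actually prove Lemma~\ref{L14}; it merely quotes it from \cite[Proposition~3.2]{YeZho18}. Hence there is no ``paper's own proof'' to compare against.

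Your argument is correct and is essentially the standard one. The product rule $N_{A_1\times A_2}(z_1,z_2)=N_{A_1}(z_1)\times N_{A_2}(z_2)$ for Fr\'echet normal cones is the right starting point, and your derivation of it is fine (the polar of a Cartesian product of cones containing the origin is the product of the polars, and the pairs $(w_1,0)$, $(0,w_2)$ indeed lie in the product tangent cone). The inclusion $\overline{N}_{A_1\times A_2}(\bar x;u)\subset\overline{N}_{A_1}(\bar x_1;u_1)\times\overline{N}_{A_2}(\bar x_2;u_2)$ then follows exactly as you describe, by splitting the generating sequences componentwise.

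For the equality, you have identified precisely the point where directional regularity enters: the definition in the paper says that $x_1^*\in\overline{N}_{A_1}(\bar x_1;u_1)$ implies that for \emph{every} null sequence $t_i\searrow0$ one can find $u_{1,i}\to u_1$ and $x_{1,i}^*\to x_1^*$ with $x_{1,i}^*\in N_{A_1}(\bar x_1+t_iu_{1,i})$. This is exactly what allows you to fix $(t_i)$ first from the $A_2$-side and then match it on the $A_1$-side, recombining via the product rule for Fr\'echet normals. Nothing is missing.
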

Next we need \cite[formula~(2.4)]{GfrOut16.2} for computing the directional limiting coderivative.
\begin{lemma}\label{L15}
Consider the mapping $F:=f_1+F_2$, where $F_2:\R^n\rightrightarrows\R^m$ has closed graph and $f_1:\R^n\to\R^m$ is continuously differentiable at $\bx\in\dom F_2$.
Given a $\by\in F(\bx)$, a pair $(u,v)\in\R^n\times\R^m$ and a $y^*\in\R^m$, it holds
\begin{gather}\label{L15-1}
\overline{D}^*F((\bx,\by);(u,v))(y^*)=\nabla f_1(\bx)^Ty^* +\overline{D}^*F_2((\bx,\by-f_1(\bx));(u,v-\nabla f_1(\bx)u))(y^*).
\end{gather}
\end{lemma}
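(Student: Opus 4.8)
The identity \eqref{L15-1} is \cite[formula~(2.4)]{GfrOut16.2}; the line of argument I would follow is to pull the graph of $F=f_1+F_2$ back onto the graph of $F_2$ by a $C^1$ change of variables and then pass to the directional limit in the classical transformation rule for Fr\'echet normal cones under diffeomorphisms. Concretely, I would introduce the map $\Phi:\R^n\times\R^m\to\R^n\times\R^m$, $\Phi(x,y):=(x,y-f_1(x))$. Since $f_1$ is continuously differentiable around $\bx$, $\Phi$ is a $C^1$-diffeomorphism with inverse $(x,z)\mapsto(x,z+f_1(x))$; directly from the definition of $F$ one sees that $\Phi(\gph F)=\gph F_2$ near the reference points and $\Phi(\bx,\by)=(\bx,\by-f_1(\bx))$. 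Its Jacobian $\nabla\Phi(x,y)$ is block lower triangular with identity diagonal blocks and $-\nabla f_1(x)$ in the lower-left corner; hence it carries a direction $(u,v)$ to $(u,v-\nabla f_1(x)u)$, while $\bigl(\nabla\Phi(x,y)^{-1}\bigr)^T$ carries a covector $(w^*,-y^*)$ to $(w^*-\nabla f_1(x)^Ty^*,-y^*)$.

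Next I would use that at every $(x,y)\in\gph F$ near $(\bx,\by)$ the standard calculus of Fr\'echet normal cones under $C^1$-diffeomorphisms (see, e.g., \cite{RocWet98}) gives the equivalence: $(w^*,-y^*)\in N_{\gph F}(x,y)$ if and only if $(w^*-\nabla f_1(x)^Ty^*,-y^*)\in N_{\gph F_2}(\Phi(x,y))$. To obtain ``$\subset$'' in \eqref{L15-1}, take $w^*\in\overline{D}^*F((\bx,\by);(u,v))(y^*)$, i.e.\ sequences $t_i\searrow0$, $(u_i,v_i)\to(u,v)$, $(w_i^*,y_i^*)\to(w^*,y^*)$ with $(w_i^*,-y_i^*)\in N_{\gph F}(\bx+t_iu_i,\by+t_iv_i)$. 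With $\tilde v_i:=v_i-\bigl(f_1(\bx+t_iu_i)-f_1(\bx)\bigr)/t_i$ one has $\by+t_iv_i-f_1(\bx+t_iu_i)=(\by-f_1(\bx))+t_i\tilde v_i$, and the mean value theorem together with the continuity of $\nabla f_1$ at $\bx$ yield $\tilde v_i\to v-\nabla f_1(\bx)u$. Applying the displayed equivalence at the points $(\bx+t_iu_i,\by+t_iv_i)$ and letting $i\to\infty$ (using $\nabla f_1(\bx+t_iu_i)\to\nabla f_1(\bx)$), the definition of the directional limiting normal cone gives $(w^*-\nabla f_1(\bx)^Ty^*,-y^*)\in\overline{N}_{\gph F_2}\bigl((\bx,\by-f_1(\bx));(u,v-\nabla f_1(\bx)u)\bigr)$, that is, $w^*\in\nabla f_1(\bx)^Ty^*+\overline{D}^*F_2\bigl((\bx,\by-f_1(\bx));(u,v-\nabla f_1(\bx)u)\bigr)(y^*)$.

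The reverse inclusion is obtained by running the same sequential argument in the opposite direction, transporting normals of $\gph F_2$ back to $\gph F$ via $\Phi^{-1}$. I expect the only delicate point to be making the Fr\'echet-normal-cone change of variables precise at the moving base points $(\bx+t_iu_i,\by+t_iv_i)$ and carrying out the limit passage cleanly --- in particular, verifying that $\bigl(f_1(\bx+t_iu_i)-f_1(\bx)\bigr)/t_i\to\nabla f_1(\bx)u$, which is exactly where continuous differentiability of $f_1$ (rather than plain differentiability at $\bx$) is used; the remaining manipulations are routine.
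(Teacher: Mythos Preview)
Your argument is correct and essentially the same as the paper's proof. The paper records the pointwise Fr\'echet coderivative identity $D^*F(x,y)({y^*}')=\nabla f_1(x)^T{y^*}'+D^*F_2(x,y-f_1(x))({y^*}')$ for $x$ near $\bx$ (which is exactly your diffeomorphism-based equivalence of Fr\'echet normal cones, rephrased) and then passes to the directional $\Limsup$ using continuity of $\nabla f_1$ at $\bx$; your sequential limit with $\tilde v_i$ is the same computation written out explicitly.
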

Since formula \eqref{L15-1} was given in \cite{GfrOut16.2} without proof, we provide here its short proof for completeness.
\begin{proof}
In view of the differentiability of $f_1$ near $\bx$, we have
\begin{gather*}%\label{L15-1}
D^*F(x,y)({y^*}')=\nabla f_1(x)^T{y^*}' +D^*F_2(x,y-f_1(x))({y^*}')
\end{gather*}
for all $x$ near $\bx$ and all $y\in F(x)$ and ${y^*}'\in\R^m$.
By the definition of the directional limiting coderivative and using the above equality and continuous differentiability of $f_1$, we have
\begin{align*}%\label{L15-1}
\overline{D}^*F&((\bx,\by);(u,v))(y^*) =\Limsup_{\substack{u'\to u,\, v'\to v\\{y^*}'\to y^*,\,t\searrow0}}D^*F(\bx+tu',\by+tv')({y^*}')
\\
&=\Limsup_{\substack{u'\to u,\, v'\to v\\{y^*}'\to y^*,\,t\searrow0}}\left[\nabla f_1(\bx+tu')^T{y^*}' +D^*F_2(\bx+tu',\by+tv'-f_1(\bx+tu'))({y^*}')\right]
\\
&=\nabla f_1(\bx)^T{y^*} +\Limsup_{\substack{u'\to u,\, v'\to v\\{y^*}'\to y^*,\,t\searrow0}} D^*F_2(\bx+tu',\by-f_1(\bx)+t(v'-\nabla f_1(\bx)u'-o(t)/t))({y^*}')
\\
&=\nabla f_1(\bx)^T{y^*} +\overline{D}^*F_2((\bx,\by-f_1(\bx));(u,v-\nabla f_1(\bx)u))(y^*).
\end{align*}
The proof is complete.
\qed\end{proof}
}

Below we compute the quantities crucial for determining estimates for the radii of metric subregularity of $F$.

\begin{proposition}\label{P2}
Suppose that the sets $D$ and $K$ are closed, and either $D$ is directionally regular at $\bx$ or $K$ is directionally regular at $g(\bx)$.
Suppose also that $g$ is continuously differentiable near $\bx$.
Then
\begin{align}\notag
\rg= &\inf \bigl\{\max\{\norm{u^*}_*,\norm{v}\} \mid
u^*+\nabla g(\bx)^Tv^*\in\overline{N}_{D}(\bx;u),
\\\label{P2-1}
&\qquad-v^*\in\overline{N}_{K}(g(\bx);v
%AK13/03/18
%-
+
\nabla g(\bx)u),\;
\|u\|=\|v^*\|_*=1\bigr\},
\\\notag
\rgo
= &\inf\bigl\{\|B\| \mid
%AK3/03/18
B\in L(\R^n,\R^m),\;
%AK13/03/18
%(B^T+\nabla g(\bx)^T)
(B+\nabla g(\bx))^T
v^*\in\overline{N}_{D}(\bx;u),
\\\label{P2-2}
&\qquad-v^*
\in\overline{N}_{K}(g(\bx);(B
%-
+
\nabla g(\bx))u),\;
%\mbox{for some}
%AK27/02/18
%(u, v^*) \in\Sp_{\R^n}\times\Sp_{\R^m}
\|u\|=\|v^*\|_*=1
\bigr\},
\\\notag
{\rgh=} &{\inf \bigl\{\norm{u^*}_*+\norm{v} \mid
u^*+\nabla g(\bx)^Tv^*\in\overline{N}_{D}(\bx;u),}
\\\label{P2-3}
&{\qquad-v^*\in\overline{N}_{K}(g(\bx);v
+
\nabla g(\bx)u),\;
\|u\|=\|v^*\|_*=1\bigr\}.}
\end{align}
\end{proposition}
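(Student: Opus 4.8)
The plan is to compute the directional limiting normal cone $\overline{N}_{\gph F}((\bx,\by);(u,v))$ for the mapping $F$ given by \eqref{4.2} and then substitute the result into the definitions \eqref{rg}, \eqref{rgo} and \eqref{rgh}, together with the representation \eqref{DF2}. Observe first that $\gph F = \{(x,y)\in\R^n\times\R^m\mid x\in D,\ g(x)+y\in K\}$, so $\gph F$ is the preimage of the product set $D\times K$ under the map $(x,y)\mapsto(x,g(x)+y)$. Equivalently, writing $F = f_1 + F_2$ with $f_1(x):=g(x)$ (viewed as a constant-in-$y$ function, i.e. here I would instead set things up via the auxiliary mapping $\widetilde F(x):=K-\widetilde g(x)$ on the product) — more cleanly, I would apply Lemma~\ref{L15} with $F_2(x):=\{y\mid x\in D,\ y\in K - 0\cdot x\}$… To keep this transparent I would proceed directly: $\gph F$ is the intersection/preimage structure $\gph F = G^{-1}(D\times K)$ where $G(x,y):=(x,g(x)+y)$ is a $C^1$ map whose Jacobian $\begin{pmatrix} I & 0\\ \nabla g(\bx) & I\end{pmatrix}$ is invertible near $\bx$. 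Hence $G$ is a local $C^1$ diffeomorphism onto its image, and directional limiting normals transform exactly under the adjoint of its inverse Jacobian: $(u^*,-v^*)\in\overline{N}_{\gph F}((\bx,\by);(u,v))$ if and only if $(u^*,-v^*) = G'(\bx,\by)^T(x^*,-y^*)$ for some $(x^*,-y^*)\in\overline{N}_{D\times K}((\bx,g(\bx));(u,\nabla g(\bx)u+v))$.

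Next I would invoke Lemma~\ref{L14}: under the stated directional regularity hypothesis on $D$ or on $K$, the product rule holds with equality, so $(x^*,-y^*)\in\overline{N}_{D\times K}((\bx,g(\bx));(u,v+\nabla g(\bx)u))$ if and only if $x^*\in\overline{N}_D(\bx;u)$ and $-y^*\in\overline{N}_K(g(\bx);v+\nabla g(\bx)u)$. Computing $G'(\bx,\by)^T(x^*,-y^*) = (x^* + \nabla g(\bx)^T(-y^*)\cdot(-1),\, -y^*)$ — carefully tracking signs — yields $u^* = x^* + \nabla g(\bx)^T y^*$ (with $y^*$ playing the role of $v^*$) and $v^* = y^*$. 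Solving, $x^* = u^* - \nabla g(\bx)^T v^*$ should actually come out as $u^*+\nabla g(\bx)^Tv^*\in\overline{N}_D(\bx;u)$ after matching the sign conventions in \eqref{pdd} and in the definition of $D^*F$; this sign-bookkeeping is where I would be most careful, cross-checking against Lemma~\ref{L15} which gives the same conclusion through the identity $\overline{D}^*F((\bx,\by);(u,v))(v^*) = \nabla g(\bx)^Tv^* + \overline{D}^*F_2(\dots)(v^*)$ with $F_2$ the ``$D,K$ only'' part.

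With $\overline{N}_{\gph F}((\bx,\by);(u,v))$ thus described, the three formulas follow mechanically. For \eqref{P2-1}: $(u^*,v)\in\widehat{\mathfrak D}F(\bx,\by)$ means $\|u\|=\|v^*\|_*=1$ and $(u^*,-v^*)\in\overline{N}_{\gph F}((\bx,\by);(u,v))$, which by the above is precisely $u^*+\nabla g(\bx)^Tv^*\in\overline{N}_D(\bx;u)$ and $-v^*\in\overline{N}_K(g(\bx);v+\nabla g(\bx)u)$; plugging into \eqref{rg} gives \eqref{P2-1}, and into \eqref{rgh} gives \eqref{P2-3}. For \eqref{P2-2}: by Proposition~\ref{P4} (or directly \eqref{rgo} with \eqref{DF2}), we additionally require $v = Bu$ and $u^* = B^Tv^*$ for some matrix $B$, with objective $\|B\|$; substituting $v=Bu$ and $u^*=B^Tv^*$ into the two normal-cone inclusions turns $u^*+\nabla g(\bx)^Tv^*$ into $(B+\nabla g(\bx))^Tv^*$ and $v+\nabla g(\bx)u$ into $(B+\nabla g(\bx))u$, yielding \eqref{P2-2}. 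The main obstacle I anticipate is not conceptual but the sign conventions: reconciling the $(u^*,-v^*)$ convention in $N_{\gph F}$, the $(u^*,-v^*)$ in \eqref{pdd}, and the direction $v+\nabla g(\bx)u$ versus $v-\nabla g(\bx)u$ in Lemma~\ref{L15} — this is exactly the kind of place the authors' revision notes flag as error-prone, so I would verify it on the scalar example $F$ built from $D=\R$, $g=\mathrm{id}$, $K=\{0\}$.
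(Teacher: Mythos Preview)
Your proposal is correct and follows essentially the same line as the paper's proof: compute the directional limiting normal cone to $\gph F$ by separating the $D\times K$ structure (via Lemma~\ref{L14}, which is where the directional regularity hypothesis enters) from the smooth map $g$, and then substitute into \eqref{rg}, \eqref{rgo}, \eqref{rgh}. The only cosmetic difference is packaging: the paper writes $F=H-g$ with $\gph H=D\times K$ and invokes Lemma~\ref{L15}, whereas you pull back through the $C^1$ diffeomorphism $G(x,y)=(x,g(x)+y)$; both routes yield $u^*+\nabla g(\bx)^Tv^*\in\overline{N}_D(\bx;u)$ and $-v^*\in\overline{N}_K(g(\bx);v+\nabla g(\bx)u)$, and your sign-tracking is right.
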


\if{
\AD{17/05/18:
What does it mean ``directionally regular"?}
\AK{25.01.18:
The definitions of directional regularity and directional limiting normal cone to be added.}

\todo[inline]{AK 18/03/17:
\cite[formula (2.4)]{GfrOut16.2} plays a crucial role in the proof below.
I suggest providing its direct proof and also reformulating it in a way more appropriate for the proof.}
}\fi

\begin{proof}
Observe that $F(x)=H(x)-g(x)$ $(x\in\R^n)$, where $\gph H=D\times K$.
%AK6.06.18
%In view of the directional regularity of either $D$ or $K$, one has
{By Lemma~\ref{L14},}
\begin{gather*}%\label{4.1}
\overline{N}_{D\times K}((\bx,g(\bx));(u,v)) =\overline{N}_{D}(\bx;u)\times\overline{N}_{K}(g(\bx);v)
\end{gather*}
for all $(u,v)\in\R^n\times\R^m$.
%cf. \cite[Proposition~3.2]{YeZho18}.
Hence, by virtue of
%AK6.06.18
%\cite[formula~(2.4)]{GfrOut16.2},
{Lemma~\ref{L15},}
we obtain
\begin{multline*}%\label{4.1}
\overline{D}^*F((\bx,0);(u,v))(v^*)
%AK13/03/18
=\overline{D}^*H((\bx,g(\bx));(u,v+\nabla g(\bx)u))(v^*) -\nabla g(\bx)^Tv^*
\\=
\begin{cases}
\overline{N}_{D}(\bx;u)-\nabla g(\bx)^Tv^* &\mbox{if } -v^*\in\overline{N}_{K}(g(\bx);v
%-
+
\nabla g(\bx)u),
\\
\es & \mbox{otherwise}.
\end{cases}
\end{multline*}
%AK4/03/18
It follows
from the representations \eqref{pdd}, \eqref{DF} and \eqref{DF2}
that
\begin{align*}%\label{pdd}
\widehat{D}F(\bx,0)(u,v^*)= &\bigl(\overline{N}_{D}(\bx;u)-\nabla g(\bx)^Tv^*\bigr)
\times\bigl\{v\in\R^m\mid
\\
&\qquad-v^*\in\overline{N}_{K}(g(\bx);v
%-
+
\nabla g(\bx)u)\bigr\}, \;(u,v^*)\in\R^n\times(\R^m)^*,
\\
\widehat{\mathfrak{D}}F(\bx,0)=
&\big\{(u^*,v)\in(\R^n)^*\times\R^m\mid u^*+\nabla g(\bx)^Tv^*\in\overline{N}_{D}(\bx;u),
\\
&\qquad-v^*\in\overline{N}_{K}(g(\bx);v
%-
+
\nabla g(\bx)u),\;
\|u\|=\|v^*\|_*=1\big\},
\\%\notag
\widehat{\mathfrak{D}}^\circ F(\bx,0)=
&\big\{(B^Tv^*,Bu)\mid
%AK13/03/18
%(B^T+\nabla g(\bx)^T)
(B+\nabla g(\bx))^T
v^*\in\overline{N}_{D}(\bx;u),
\\
&\qquad -v^*\in\overline{N}_{K}(g(\bx);(B
%-
+
\nabla g(\bx))u),\;\|u\|=\|v^*\|_*=1%,\; B\in L(\R^n,\R^m)
\big\}.
\end{align*}
%for all $(u,v)\in T_{\gph F}(\bx,0)$.
Substituting the last two expressions into the definitions \eqref{rg}, \eqref{rgo} and \eqref{rgh} leads to the representations \eqref{P2-1}, \eqref{P2-2} and \eqref{P2-3}, respectively.
\qed\end{proof}
\AK{25/01/18:
%I seem to have got lost in the signs.
%Please check.

%Can the minuses in \eqref{P2-1} be dropped?

%Can the tangent cones in the proof be dropped?
%They seems to be implicitly implied by the definitions of directional limiting normal cones.

It would be good to formulate some meaningful interpretation of
%the last expression in
\eqref{P2-1},
\eqref{P2-2},
\eqref{P2-3}
in terms of a kind of `regularity' of the constraint system.}
\if{\AK{4/03/18:
I seem to have a problem with the signs again.
In the previous version, $\nabla g(\bx)$ entered both parts of \eqref{P2-2} with the same sign, and it was nice.
I have spoiled the picture.
The same with representation \eqref{P2-1}.
}}\fi
\if{
\begin{remark}
By \cite[Proposition~7.6]{Iof17}, metric subregularity of the feasibility mapping \eqref{4.2} at $\bx$ for 0 is equivalent to that of the following `extended' feasibility mapping $\widetilde{F}:\R^n \rightrightarrows \R^{m+n}$ (at the same point):
\begin{gather*}
\widetilde{F}(x):=
\begin{bmatrix}
g(x)-K
\\
x-D
\end{bmatrix},\quad
x\in\R^n.
\end{gather*}
\end{remark}

For a constraint system
\begin{gather*}%\label{4.1}
g(x)\in K,
\end{gather*}
which is a particular case of \eqref{4.1} with $D=\R^n$, we can formulate the following corollary using the simplified feasibility mapping $F(x):=g(x)-K$, $x\in\R^n$.}\fi

The next
corollary is a consequence of Proposition~\ref{P2} and Theorem~\ref{ThRadLip}.
It gives estimates for the radii
of metric subregularity of $F$ at $\bx$ for 0, or equivalently, of calmness of the corresponding solution mapping
\begin{gather*}%\label{4.1}
S(y):=\{x\in D\mid
g(x)
%AK5/02/18
%-
+
y\in K\},\quad y\in\R^m
\end{gather*}
at $0$ for $\bx$.
%It is a consequence of Theorem~\ref{T2}.
%Further, by \cite[Proposition~7.6]{Iof17} it is equivalent to the subtransversality of the set system $\{g\iv(K),D\}$ at $\bx$.

%AK4/03/18
\begin{corollary}%\label{P2-1}
Under the assumptions of Proposition~\ref{P2},
\begin{align*}%\label{EqRadLip}
&{\inf \bigl\{\max\{\norm{u^*}_*,\norm{v}\} \mid
u^*+\nabla g(\bx)^Tv^*\in\overline{N}_{D}(\bx;u),}
\\
&{\qquad-v^*\in\overline{N}_{K}(g(\bx);v+\nabla g(\bx)u),\;
\|u\|=\|v^*\|_*=1\bigr\}}
\\%\label{P2-1}
\le\rad_{Lip}F(\bx\for0)\leq
&\inf \bigl\{\norm{u^*}_*+\norm{v}\mid
u^*+\nabla g(\bx)^Tv^*\in\overline{N}_{D}(\bx;u),
\\%\label{P2-1}
&\qquad-v^*\in\overline{N}_{K}(g(\bx);v+\nabla g(\bx)u),\;
\|u\|=\|v^*\|_*=1\bigr\},
\\
\rad_{ss}F(\bx\for0)=& \rad_{C^1}F(\bx\for0)
\\=&\inf\bigl\{\|B\| \mid
%AK3/03/18
B\in L(\R^n,\R^m),
(B+\nabla g(\bx))^T v^*\in\overline{N}_{D}(\bx;u),
\\
&\qquad-v^*
\in\overline{N}_{K}(g(\bx);(B+\nabla g(\bx))u),\;
%\mbox{for some}
%AK27/02/18
%(u, v^*) \in\Sp_{\R^n}\times\Sp_{\R^m}
\|u\|=\|v^*\|_*=1
\bigr\}.
\end{align*}
\end{corollary}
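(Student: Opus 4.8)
The plan is to read the corollary off Theorem~\ref{ThRadLip} and Proposition~\ref{P2}: one specialises Theorem~\ref{ThRadLip} to the feasibility mapping $F$ of \eqref{4.2} at $(\bx,0)$ and then substitutes the explicit formulas for the regularity constants provided by Proposition~\ref{P2}. No genuinely new computation is involved.

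First I would check that Theorem~\ref{ThRadLip} is applicable. Since $D$ and $K$ are closed and $g$ is continuous near $\bx$, the set $\gph F=\{(x,y)\in\R^n\times\R^m\mid x\in D,\ g(x)+y\in K\}$ is closed, and $(\bx,0)\in\gph F$ because $g(\bx)\in K$. Hence Theorem~\ref{ThRadLip}, applied to this $F$ with $\by=0$, gives
\[
\rg\le\rad_{Lip}F(\bx\for0)\le\rgh
\AND
\rad_{ss}F(\bx\for0)=\rad_{C^1}F(\bx\for0)=\rgo,
\]
the constants $\rg$, $\rgh$, $\rgo$ being \eqref{rg}, \eqref{rgh}, \eqref{rgo} at the point $(\bx,0)$.

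Second, the hypotheses of the corollary are precisely those of Proposition~\ref{P2} (the sets $D$, $K$ closed, one of them directionally regular at $\bx$, resp.\ at $g(\bx)$, and $g$ of class $C^1$ near $\bx$), so Proposition~\ref{P2} supplies the representations \eqref{P2-1}, \eqref{P2-2} and \eqref{P2-3} of $\rg$, $\rgo$ and $\rgh$ at $(\bx,0)$ in terms of the directional limiting normal cones $\overline{N}_D(\bx;u)$, $\overline{N}_K(g(\bx);\,\cdot\,)$ and the Jacobian $\nabla g(\bx)$. Substituting these three expressions into the two chains displayed above reproduces verbatim the three formulas asserted in the corollary. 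Moreover the identification with the radii of calmness of $S$ is immediate once one notes that $F^{-1}(y)=\{x\in D\mid g(x)+y\in K\}=S(y)$ and recalls from Section~\ref{prelim} that metric subregularity of $F$ at $\bx$ for $0$ is equivalent to calmness of $F^{-1}$ at $0$ for $\bx$.

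The argument is thus purely a substitution and carries no real difficulty; the only step that must be verified rather than quoted is the closedness of $\gph F$, which is what permits the use of Theorem~\ref{ThRadLip}.
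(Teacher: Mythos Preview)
Your proposal is correct and matches the paper's own treatment: the paper simply states that the corollary ``is a consequence of Proposition~\ref{P2} and Theorem~\ref{ThRadLip}'' without writing out a proof, and your argument spells out exactly this substitution, together with the (implicit in the paper) check that $\gph F$ is closed so that Theorem~\ref{ThRadLip} applies.
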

The particular case of the constraint system
\begin{gather*}%\label{4.1}
g(x)\in K
\end{gather*}
corresponds to taking $D=\R^n$ in \eqref{4.1},
while the ``feasibility'' mapping takes the form
\begin{gather}\label{4.34}
F(x):=K-g(x),\quad x\in\R^n.
\end{gather}
Assuming that $g(\bx)\in K$, we again have $\by:=0\in F(\bx)$.
Note that the set $D=\R^n$ is automatically directionally regular at any point.
\begin{corollary}\label{C14}
Suppose that the set $K$ is closed, $g$ is continuously differentiable near $\bx$ and $F$ is given by \eqref{4.34}.
Then
\begin{align*}\notag
\rg= &\inf \bigl\{\max\{\norm{u^*}_*,\norm{v}\} \mid
u^*+\nabla g(\bx)^Tv^*=0,
\\%\label{P2-1}
&\qquad-v^*\in\overline{N}_{K}(g(\bx);v+\nabla g(\bx)u),\;
\|u\|=\|v^*\|_*=1\bigr\},
\\\notag
\rgo=&
%AK3/03/18
\inf\bigl\{\|B\| \mid
B\in L(\R^n,\R^m),\;
(B+\nabla g(\bx))^T v^*=0,
\\
&\qquad-v^*
\in\overline{N}_{K}(g(\bx);(B+\nabla g(\bx))u),\;
%\mbox{for some}
%AK27/02/18
%(u, v^*) \in\Sp_{\R^n}\times\Sp_{\R^m}
\|u\|=\|v^*\|_*=1
\bigr\},
\\\notag
{\rgh=} &{\inf \bigl\{\norm{u^*}_*+\norm{v} \mid
u^*+\nabla g(\bx)^Tv^*=0,}
\\%\label{P2-3}
&{\qquad-v^*\in\overline{N}_{K}(g(\bx);v
+
\nabla g(\bx)u),\;
\|u\|=\|v^*\|_*=1\bigr\}.}
\end{align*}
\end{corollary}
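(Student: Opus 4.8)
The plan is to obtain Corollary~\ref{C14} as the special case $D=\R^n$ of Proposition~\ref{P2}, so the only real work is to check that the hypotheses of that proposition are satisfied here and to simplify its three formulas when $D=\R^n$. First I would verify applicability of Proposition~\ref{P2}: $g$ is assumed continuously differentiable near $\bx$, $K$ is closed, and $D=\R^n$ is closed and (as recalled just before the corollary) directionally regular at every point in every direction. Since $T_{\R^n}(x)=\R^n$ and hence $N_{\R^n}(x)=\{0\}$ for all $x$, both the directional limiting normal cone and the set appearing in the definition of directional regularity of $\R^n$ trivially reduce to $\{0\}$; so the assumption of Proposition~\ref{P2} that either $D$ or $K$ be directionally regular holds automatically.

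Next I would compute $\overline{N}_D(\bx;u)$ for $D=\R^n$. From $N_{\R^n}(y)=\{0\}$ for every $y\in\R^n$ and the definition of the directional limiting normal cone, any admissible sequence $x^*_i\in N_{\R^n}(\bx+t_iu_i)$ is identically zero; since $\bx+t_iu_i\in\R^n$ for arbitrary $t_i\searrow0$ and $u_i\to u$, such sequences exist, and therefore $\overline{N}_{\R^n}(\bx;u)=\{0\}$ for every $u\in\R^n$.

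Finally I would substitute $\overline{N}_D(\bx;u)=\{0\}$ into formulas \eqref{P2-1}, \eqref{P2-2} and \eqref{P2-3}. The inclusions $u^*+\nabla g(\bx)^Tv^*\in\overline{N}_D(\bx;u)$ and $(B+\nabla g(\bx))^Tv^*\in\overline{N}_D(\bx;u)$ collapse to the equalities $u^*+\nabla g(\bx)^Tv^*=0$ and $(B+\nabla g(\bx))^Tv^*=0$, respectively, while the constraints involving $\overline{N}_K(g(\bx);\,\cdot\,)$ and the normalization $\|u\|=\|v^*\|_*=1$ are carried over unchanged. This produces precisely the three displayed expressions for $\rg$, $\rgo$ and $\rgh$, completing the proof. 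The argument contains no genuine obstacle; the single point needing a line of justification is the evaluation $\overline{N}_{\R^n}(\bx;u)=\{0\}$ and the attendant remark that $\R^n$ is directionally regular, which is exactly what makes Proposition~\ref{P2} applicable in the first place.
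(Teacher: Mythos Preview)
Your proposal is correct and matches the paper's approach exactly: the paper presents Corollary~\ref{C14} as the specialization of Proposition~\ref{P2} to $D=\R^n$, relying on the remark that $\R^n$ is automatically directionally regular at every point, and leaves the substitution $\overline{N}_{\R^n}(\bx;u)=\{0\}$ implicit. Your write-up simply makes that substitution explicit.
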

\begin{corollary}%\label{P2-1}
Under the assumptions of Corollary~\ref{C14},
\begin{align*}%\label{EqRadLip}
&{\inf \bigl\{\max\{\norm{u^*}_*,\norm{v}\} \mid
u^*+\nabla g(\bx)^Tv^*=0,}
\\
&{\qquad-v^*\in\overline{N}_{K}(g(\bx);v+\nabla g(\bx)u),\;
\|u\|=\|v^*\|_*=1\bigr\}}
\\%\label{P2-1}
\le\rad_{Lip}F(\bx\for0)\leq
&\inf \bigl\{\norm{u^*}_*+\norm{v}\mid
u^*+\nabla g(\bx)^Tv^*=0,
\\%\label{P2-1}
&\qquad-v^*\in\overline{N}_{K}(g(\bx);v+\nabla g(\bx)u),\;
\|u\|=\|v^*\|_*=1\bigr\},
\\
\rad_{ss}F(\bx\for0)=& \rad_{C^1}F(\bx\for0)
\\=&\inf\bigl\{\|B\| \mid
%AK3/03/18
B\in L(\R^n,\R^m),
(B+\nabla g(\bx))^Tv^*=0,
\\
&\qquad-v^*
\in\overline{N}_{K}(g(\bx);(B+\nabla g(\bx))u),\;
%\mbox{for some}
%AK27/02/18
%(u, v^*) \in\Sp_{\R^n}\times\Sp_{\R^m}
\|u\|=\|v^*\|_*=1
\bigr\}.
\end{align*}
\end{corollary}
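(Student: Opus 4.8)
The plan is to obtain this corollary directly from Corollary~\ref{C14} and Theorem~\ref{ThRadLip}, in exact parallel with the way the preceding corollary (for a general set $D$) was deduced from Proposition~\ref{P2}. First I would verify that the hypotheses of Theorem~\ref{ThRadLip} are in force: since $K$ is closed and $g$ is continuous near $\bx$, the feasibility mapping $F$ defined by \eqref{4.34} has closed graph, and $(\bx,0)\in\gph F$ because $g(\bx)\in K$. Hence Theorem~\ref{ThRadLip}, applied at $\bx$ for $\by:=0$, gives $\rg\le\rad_{Lip}F(\bx\for0)\le\rgh$ together with $\rad_{ss}F(\bx\for0)=\rad_{C^1}F(\bx\for0)=\rgo$.

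Next I would simply insert the explicit evaluations of $\rg$, $\rgh$ and $\rgo$ recorded in Corollary~\ref{C14}. Recall that those evaluations come from Proposition~\ref{P2} specialised to $D=\R^n$: the requirement $u^*+\nabla g(\bx)^Tv^*\in\overline{N}_D(\bx;u)$ collapses to the equation $u^*+\nabla g(\bx)^Tv^*=0$, since the directional limiting normal cone of the whole space is trivial, $\overline{N}_{\R^n}(\bx;u)=\{0\}$, while the $K$-part of the constraint, $-v^*\in\overline{N}_K(g(\bx);v+\nabla g(\bx)u)$ (respectively $-v^*\in\overline{N}_K(g(\bx);(B+\nabla g(\bx))u)$ in the expression for $\rgo$), stays unchanged. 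Substituting these three infima term by term into the two relations displayed above turns them into precisely the three formulas asserted by the corollary.

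Since all the genuine work has already been carried out --- in Proposition~\ref{P2} (hence Corollary~\ref{C14}) for the computation of the regularity constants, and in Theorem~\ref{ThRadLip} for their link with the radii --- no real obstacle remains here. The only point deserving a word of care is the reduction $\overline{N}_{\R^n}(\bx;u)=\{0\}$, which also explains why the directional-regularity assumption on $D$ imposed in Proposition~\ref{P2} is automatically satisfied when $D=\R^n$, as already noted just before Corollary~\ref{C14}.
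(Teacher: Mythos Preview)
Your proposal is correct and matches the paper's approach: the corollary is stated without proof in the paper, being an immediate consequence of combining the radius estimates of Theorem~\ref{ThRadLip} with the explicit expressions for $\rg$, $\rgh$ and $\rgo$ supplied by Corollary~\ref{C14}. Your brief verification that $F$ has closed graph (so that Theorem~\ref{ThRadLip} applies) is a welcome addition that the paper leaves implicit.
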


\if{\AK{25.01.18:
Is directional regularity of $K$ needed in the Corollary?}}\fi

\if{
\begin{remark}%\label{P2}
In the proof of Proposition~\ref{P2}, using representation \eqref{P2P-1} of the directional limiting cone to the graph of $F$, quantity \eqref{rgo} was computed.
The same way, one can compute quantities \eqref{rgd} and \eqref{rgdag}:
\end{remark}
}\fi

%\section{Examples}\label{S5}

\if{\AK{4/03/18
The section should be updated.
It would be good to compute all constants.
}}\fi

%AK15/04/18
{
%AK1705/18
%In this section
Now
we illustrate the
%main
above
results
%of the paper
by examples.

%AK4/06/18
\begin{example}%\label{E1}
%First,
It is easy to check by direct computation that, for the zero mapping $f:\R\to\R$ (that is $f(x)=0$ for all $x\in\R$) considered in Example~\ref{E2}, it holds rg$f(\bx,0)=$rg$^\circ f(\bx,0)=0$ for any $\bx\in\R$.
Hence, by Theorem~\ref{ThRadLip}, $$\rad_{Lip}f(\bx\for0)=
\rad_{ss}f(\bx\for0)=\rad_{C^1}f(\bx\for0)=0,$$ which of course agrees with the observation made in Example~\ref{E2}.
\qed\end{example}

Next we consider a couple of more involved examples.}

%\subsection{Constrained system}

%We start with an example having the structure of the constrained system considered in the previous section.
%It is a modification of \cite[Example~3]{GfrOut16}.

\begin{example}%\label{E1}
Let the mapping $F:\R^2\rightrightarrows\R^2$ be defined as follows:
\begin{gather}\label{E4-0}
F(x)=
\begin{cases}
x-K&\mbox{if } x\in D,
\\
\emptyset&\mbox{otherwise},
\end{cases}
\end{gather}
where $D=\{(x_1,x_2)\in\R^2\mid|x_2|\le x_1\}$ and $K=\{(x_1,x_2)\in\R^2_+\mid x_1 x_2=0\}$ is the ``complementary angle''.
The mapping \eqref{E4-0} can be considered as a special case of \eqref{4.2} with $g$ being the identity mapping.
We have $(\bx,\by)\in\gph F$ with
$\bx=\by=0\in\R^2$.

Since $F$ is polyhedral, it is metrically subregular at $\bx$ for $\by$.
%(cf. \cite{Rob81}).
%AK30/03/18
{At the same time, it is not strongly subregular at $\bx=0$ for $\by=0$ as 0 is not an isolated point of $F\iv(0)=D\cap K={\R_+\times\{0\}}$.}
Next we employ the tools of Section~\ref{main} to demonstrate that the metric subregularity of $F$ is preserved if it is perturbed by functions from the classes $\mathcal{F}_{Lip}$, $\mathcal{F}_{ss}$, $\mathcal{F}_{C^1}$ with sufficiently small Lipschitz moduli at $\bx$, and compute the respective radii.

In the current setting, formulas \eqref{P2-1}, \eqref{P2-3} and \eqref{P2-2} take, respectively, the following form:
\begin{align}\notag
\rg= &\inf \bigl\{\max\{\norm{u^*}_*,\norm{v}\} \mid
u^*+v^*\in\overline{N}_{D}(\bx;u),
\\\label{4.35}
&\hspace{3cm} -v^*\in\overline{N}_{K}(\bx;u+v),\;
\|u\|=\|v^*\|_*=1\bigr\},
\\\notag
\rgh= &\inf \bigl\{\norm{u^*}_*+\norm{v}\mid
u^*+v^*\in\overline{N}_{D}(\bx;u),
\\\label{4.36}
&\hspace{3cm} -v^*\in\overline{N}_{K}(\bx;u+v),\;
\|u\|=\|v^*\|_*=1\bigr\},
\\\notag
\rgo
= &\inf\bigl\{\|B\| \mid
B\in L(\R^n,\R^m),\;
(B+I)^Tv^*\in\overline{N}_{D}(\bx;u),
\\\label{4.37}
&\hspace{3cm} -v^*\in\overline{N}_{K}(\bx;(B+I)u),\;
\|u\|=\|v^*\|_*=1
\bigr\},
\end{align}
where $I$ denotes the identity mapping.

The directional limiting normal cones to $D$ and $K$ involved in \eqref{4.35}, \eqref{4.36} and \eqref{4.37} can be easily computed.
For any $u=(u_1,u_2)\in\R^2$, we have
\begin{align}\label{E4-2}
\overline{N}_{D}(\bx;u)=N_D(u)&=
\begin{cases}
\{(\xi_1,\xi_2)\mid \xi_1+|\xi_2|\le0\}&\mbox{if } u_1=u_2=0,
\\
\{(\xi_1,\xi_2)\mid \xi_1=-\xi_2\le0\}&\mbox{if } u_1=u_2>0,
\\
\{(\xi_1,\xi_2)\mid \xi_1=\xi_2\le0\}&\mbox{if } u_1=-u_2>0,
\\
\{(0,0)\}&\mbox{if } |u_2|<u_1,
\\
\emptyset&\mbox{otherwise},
\end{cases}
\\\label{E4-3}
\overline{N}_{K}(\bx;u)=N_K(u)&=
\begin{cases}
\R^2_-&\mbox{if } u_1=u_2=0,
\\
\{0\}\times\R&\mbox{if } u_1>0,\;u_2=0,
\\
\R\times\{0\}&\mbox{if } u_1=0,\;u_2>0,
\\
\emptyset&\mbox{otherwise}.
\end{cases}
\end{align}
Of course, only the points producing nonempty cones are of interest.
Besides, in accordance with \eqref{4.35}, \eqref{4.36} and \eqref{4.37}, one only needs to compute normals to $D$ at nonzero points; thus, the first case in \eqref{E4-2} can be excluded.
These observations leave us with three cases in \eqref{E4-2} (cases 2--4) and three cases in \eqref{E4-3} (cases 1--3), which produce 9 combinations.

Let vectors $u=(u_1,u_2)$, $v=(v_1,v_2)$, $u^*=(u_1^*,u_2^*)$ and $v^*=(v_1^*,v_2^*)$ be such that
\begin{gather}\label{E4-7}
u^*+v^*\in\overline{N}_D(u),\quad
%AK20/03/18
%\red{-}
-v^*\in\overline{N}_K(u+v),\quad \norm{u}=\norm{v^*}_*=1.
\end{gather}

Case 4 in \eqref{E4-2} leads to $u^*+v^*=0$, and consequently, $\norm{u^*}_*=\norm{v^*}_*=1$.
Similarly, case 1 in \eqref{E4-3} leads to $u+v=0$, and consequently, $\norm{v}=\norm{u}=1$.
Thus, in each of these two cases, $\max\{\norm{u^*}_*,\norm{v}\}\ge1$.

In all four combinations of the remaining cases 2 and 3 in \eqref{E4-2} and cases 2 and 3 in \eqref{E4-3}, we have $|u_1|=|u_2|$, $|u_1^*+v_1^*|=|u_2^*+v_2^*|$, and either $|v_1|=|u_1|$ and $v_2^*=0$, or $|v_2|=|u_2|$ and $v_1^*=0$.
Further analysis of these combinations depends on the type of the norm on $\R^2$ used in the above relations.
Let $\R^2$ be equipped with the $l_p$ ($1\le p\le+\infty$) norm:
$\norm{(u_1,u_2)}_p =\left(|u_1|^p+|u_2|^p\right)^{\frac{1}{p}}$ for all $(u_1,u_2)\in\R^2$.
Recall the usual convention: $\norm{(u_1,u_2)}_\infty=\max\{|u_1|,|u_2|\}$.

Since $\norm{u}=1$, we have $|u_1|=|u_2|=2^{-\frac{1}{p}}$
Since $\norm{v^*}_*=1$, we also have either ${|v_1|=2^{-\frac{1}{p}}}$, $|v_1^*|=1$ and $|u_1^*+v_1^*|=|u_2^*|$, or $|v_2|=2^{-\frac{1}{p}}$, $|v_2^*|=1$ and $|u_2^*+v_2^*|=|u_1^*|$.
In both cases, we obtain $\norm{v}\ge 2^{-\frac{1}{p}}$, $|u_1^*|+|u_2^*|\ge1$, and consequently,
%30/03/18
{using the standard relationship between $l_q$ and $l_1$ norms,}
\AK{17/05/18.
A reference?}
$\norm{u^*}_*=\norm{u^*}_q\ge {2^{-\frac{1}{p}}\norm{u^*}_1=} 2^{-\frac{1}{p}}(|u_1^*|+|u_2^*|)\ge 2^{-\frac{1}{p}}$, where $q>0$ and $\frac{1}{p}+\frac{1}{q}=1$.
Thus, $\max\{\norm{u^*}_*,\norm{v}\}\ge 2^{-\frac{1}{p}}$.
Since $2^{-\frac{1}{p}}\le1$, taking into account the estimates for case 4 in \eqref{E4-2} and case 1 in \eqref{E4-3}, we conclude that $\rg\ge 2^{-\frac{1}{p}}$.

Moreover, the above estimate is attained.
Indeed, take $u:=\big(2^{-\frac{1}{p}},2^{-\frac{1}{p}}\big)$, ${v:=\big(0,-2^{-\frac{1}{p}}\big)}$, $u^*:=\big(-\frac{1}{2},-\frac{1}{2}\big)$ and $v^*:=(0,1)$ to satisfy all the conditions in \eqref{E4-7}.
Then $\norm{v}_p=\norm{u^*}_q=2^{-\frac{1}{p}}$.
It follows that $\rg=2^{-\frac{1}{p}}$.
Observe that ${v=Bu}$ and $u^*=B^Tv^*$, where
$B=
\begin{pmatrix}
0&0\\-\frac{1}{2}&-\frac{1}{2}
\end{pmatrix}
$.
Obviously, $\norm{B}=2^{-\frac{1}{p}}$.
Comparing formulas \eqref{4.35} and \eqref{4.37} and taking into account Proposition~\ref{P3}(i), we conclude that
%30/03/18
${\rg=\rgo=\rgd}
=2^{-\frac{1}{p}}$.
At the same time, by \eqref{4.36}, ${\rgh\le2^{\frac{1}{q}}}$.
In accordance with Theorem~\ref{ThRadLip},
%30/03/18
$${\rad_{Lip}F(\bx\for\by)=}
\rad_{ss}F(\bx\for\by)=\rad_{C^1}F(\bx\for\by) =2^{-\frac{1}{p}}.$$
%while $2^{-\frac{1}{p}}\le\rad_{Lip}F(\bx\for\by) \le2^{\frac{1}{q}}$.

In the particular cases of interest, we have the following
%30/03/18
%estimates
{values}
for the radii:
\begin{itemize}
\item
$p=1$:
$
%30/03/18
{\rad_{Lip}F(\bx\for\by)=}
\rad_{ss}F(\bx\for\by)=\rad_{C^1}F(\bx\for\by) =\frac{1}{2}$;
%\\ $\frac{1}{2}\le\rad_{Lip}F(\bx\for\by) \le1$;
\item
$p=2$:
$
%30/03/18
{\rad_{Lip}F(\bx\for\by)=}
\rad_{ss}F(\bx\for\by)=\rad_{C^1}F(\bx\for\by) =\frac{1}{\sqrt{2}}$;
%\\ $\frac{1}{\sqrt{2}}\le\rad_{Lip}F(\bx\for\by) \le\sqrt{2}$;
\item
$p=+\infty$:
$
%30/03/18
{\rad_{Lip}F(\bx\for\by)=}
\rad_{ss}F(\bx\for\by)=\rad_{C^1}F(\bx\for\by) =1$.
%;\\ $\frac{1}{2}\le\rad_{Lip}F(\bx\for\by) \le2$.
\end{itemize}
Observe that in the case of the Euclidean norm ($p=2$), the vectors in the above example, which insure that the estimate for the regularity constant is attained, satisfy also ${u^*}^Tu={v^*}^Tv=-\frac{1}{\sqrt{2}}$ and $\norm{u^\ast}_2^2+\norm{v}_2^2-({u^*}^Tu)^2 =\frac{1}{2}$.
Hence, by \eqref{rgdag} and Proposition~\ref{PropBndRadSS},
%30/03/18
$\rgdag=
{\rg=\rgo=\rgd}$.
\qed\end{example}

\if{
%\todo[inline]
%{AK 15/02/17:
%The presentation of the above example can be streamlined.}
\todo[inline]
AK 15/04/17:
The computations in the Example are relatively simple. %thanks to the employing on the space of $2\times2$ matrices the norm induced by the maximum norm in $\R^2$.
I think working with the Frobenius norm would be more challenging.

The lower bound of the radius of subregularity in the above example was established by considering only the first line of the matrix $B$, which corresponds to the first (primal space) component of the critical limit set being nonzero.
It would be good to have a
%AK 15/01/18
\red{more relevant}
example where this first component is zero, and the stability is ensured by the second one.

\AD{13/01/18.
Maybe another application/example, e.g. for a constraint system???}}\fi

%AK 7/04/18
{
\begin{example}%\label{E1}
When dealing with more complicated constraint systems than the one considered above, analyzing multiple individual cases may not be practical.
It can be more convenient to compute the needed regularity constants by solving appropriate optimization problems.
For instance, in the above example, when $p=2$ for the constant \eqref{rgdag}, we have:
\begin{multline}\label{E4-43}
(\rgdag)^2=\inf \Bigl\{\norm{u^\ast}_2^2+\norm{v}_2^2-({u^*}^Tu)^2 \mid
u^*+v^*\in\overline{N}_D(u),\\
-v^*\in\overline{N}_K(u+v),\;
{u^*}^Tu={v^*}^Tv,\;
\|u\|_2=\|v^*\|_2=1
\Bigr\}.
\end{multline}
As discussed above, when computing regularity constants, only four very similar combinations of two cases in \eqref{E4-2} and two cases in \eqref{E4-3} are of interest, and it is sufficient to consider only one of them.
For instance, the combination of the second case in \eqref{E4-2} and the second case in \eqref{E4-3} gives us $u=\left(\frac{1}{\sqrt{2}},\frac{1}{\sqrt{2}}\right)$, $v^*=(0,\pm1)$, $v\in\left\{\left(x-\frac{1}{\sqrt{2}}, -\frac{1}{\sqrt{2}}\right)\mid x\ge0\right\}$ and $u^*\in\left\{\left(-y,y\mp1\right)\mid y\ge0\right\}$.
The objective function of the respective minimization problem in the \RHS\ of \eqref{E4-43} amounts to
\begin{gather*}%\label{E4-43}
y^2+(y\mp1)^2 +\left(x-\frac{1}{\sqrt{2}}\right)^2+\frac{1}{2} -\frac{1}{2}(-y+y\mp1)^2 =x^2-\sqrt{2}x+2y^2\mp2y+\frac{3}{2},
\end{gather*}
and the compatibility constraint is fulfilled:
\begin{gather*}%\label{E4-43}
u^*{}^Tu=\frac{1}{\sqrt{2}}(-y+y\mp1)=\mp\frac{1}{\sqrt{2}}, \quad
v^*{}^Tv=(\pm1)\left(-\frac{1}{\sqrt{2}}\right) =\mp\frac{1}{\sqrt{2}}.
\end{gather*}
The respective subproblem of \eqref{E4-43} reduces, thus, to choosing the second component of the vector $v^*$: either 1 or $-1$, and two one-dimensional convex minimization problems, the second one depending on the choice:
\begin{gather*}%\label{E4-43}
\min_{x\ge0}\left(x^2-\sqrt{2}x\right)
\qdtx{and}
\min_{y\ge0}\left(y^2\mp y\right).
\end{gather*}
Since $y^2-y\le y^2+y$ for all $y\ge0$, one has to choose $v^*=(0,1)$, which leads to considering $u^*\in\left\{\left(-y,y-1\right)\mid y\ge0\right\}$ and choosing the minus sign in the second minimization problem.
The solutions $x=\frac{1}{\sqrt{2}}$ and $y=\frac{1}{2}$ of the above problems provide us with the same  ``optimal'' vectors $v=\left(0,-\frac{1}{\sqrt{2}}\right)$ and $u^*=\left(-\frac{1}{2},-\frac{1}{2}\right)$, and the value of the constant \eqref{rgdag}: $\rgdag=\frac{1}{\sqrt{2}}$.
\qed\end{example}
}

{
In general, computation of $\rgdag$ in the case of the constraint system \eqref{4.1} with Euclidean norms and polyhedral sets $D$ and $K$ amounts to solving a disjunctive program with a smooth objective function.
Computing the other regularity constants may be more demanding because of the nonsmoothness of their objective functions.
}

%AK17/05/18
At the end of the paper, we present an example, which demonstrates lack of robustness of metric subregularity.

%AK15/04/18
%\subsection{Lack of robustness}
\begin{example}%\label{E1}
Let two sequences $\{a_k\}$ and $\{b_k\}$ of positive numbers be given, such that $a_{k+1}<b_k<a_k$ $(k=1,2,\ldots)$, $a_k\to0$ (and consequently $b_k\to0$) and $\frac{b_k-a_{k+1}}{a_k-b_k}\to0$ as ${k\to+\infty}$.
For all $k=1,2,\ldots$, set
\begin{gather*}%\label{27}
\varphi(t):=
\begin{cases}
t-a_{k+1} &\mbox{if }a_{k+1}\le t<b_k,
\\
1 &\mbox{if }b_k\le t<a_k,
\end{cases}
\end{gather*}
and define a real-valued function $f$ on $(-a_1,a_1)$ by $f(x):=\int_0^{|x|}\varphi(t)dt$.
Thus, the graph of $f$ consists of linear pieces with slope 1 (when $b_k<|x|<a_k$) and parabolic pieces (when $a_{k+1}<|x|<b_k$), with the contribution of the latter diminishing as $x$ approaching 0.

Observe that $\lim_{t\uparrow b_k}\varphi(t) =b_k-a_{k+1}<1$ for all $k$ large enough, and consequently, $f(x)<|x|$ when $|x|$ is small enough.
On the other hand, $f(0)=0$ and, for any nonzero $x\in(-a_1,a_1)$ and with $n$ being the smallest natural number such that $a_n\le|x|$, we have
\begin{align*}%\label{27}
f(x)&>|x|-\sum_{k=n}^\infty(b_k-a_{k+1})
\ge|x|-\left(\max_{k\ge n} \frac{b_k-a_{k+1}}{a_k-b_k}\right) \sum_{k=n}^\infty(a_k-b_k)
\\
&>|x|-\left(\max_{k\ge n} \frac{b_k-a_{k+1}}{a_k-b_k}\right) \sum_{k=n}^\infty(a_k-a_{k+1})\ge|x|\left(1-\max_{k\ge n} \frac{b_k-a_{k+1}}{a_k-b_k}\right).
\end{align*}
Hence, $\lim_{x\to0}\frac{f(x)}{|x|}=1$, and consequently, $Df(0,0)(u)=|u|$ for all $u\in\R$.
It follows from Proposition~\ref{P3}(iv) that rg$[f](0,0)\ge1$.
(It is not difficult to show that rg$[f](0,0)=1$.)
Thus, $f$ is metrically subregular (in fact, strongly subregular) at 0 together with all its perturbations by Lipschitz continuous functions with Lipschitz modulus 1.
At the same time, $f$ is not metrically subregular at any $a_k$ $(k=1,2,\ldots)$.
\qed\end{example}

%AK4/06/18
%This means, in particular, that
Thus, for $F$ given by \eqref{4.2}, the positiveness of the radius $\rad_{Lip}F(\bx,0)$ does not imply the existence of \nbh s of $\bx$ and 0 where the subregularity is preserved.
In fact, this holds only at points $x$ and $0$ for $x$ close to $\bx$ \cite{GfrMor15}.

\section{Further research}\label{S5}

In this paper we obtain expressions and bounds for the radius of metric subregularity of mappings, in various settings, based on
generalized derivatives. In the last section we specify  these expressions/bounds for a mapping describing a system of constraints typically appearing in optimization.
We do not discuss here how to efficiently  compute these quantities; this remains an open task for further research. On a broader level, one may ask what would be the aim for having  these quantities computed.

In the Introduction we mentioned that the radius of nonsingularity of matrices is ultimately related to their condition number. The concept of conditioning plays a major role in numerical linear algebra, and preconditioning is a  highly efficient  tool for enhancing computations in numerical linear algebra. Then we come to the natural question whether
the expressions for the radius of regularity properties (not only subregularity) could be utilized in procedures for conditioning of problems of feasibility and optimization. Although there is a bulk of  studies in those directions, see the monograph \cite{BurCuc13},
the results in the whole area seem to be scattered and  lacking  unifying ideas.
 We believe that the radius theorems could serve as a basis for such a unification.
In any case, developing techniques for  conditioning of optimization problems is a challenging avenue for further research.

In this paper we consider mappings acting in  finite dimensions which is essential for the proofs. Could (some of) the results be extended to  infinite-dimensional spaces? As for the other regularity properties, there is a partial progress on that for metric regularity.   Most notably,
Ioffe constructed in \cite{Iof03.2} a Lipschitz continuous and weakly continuously Fr\'echet differentiable mapping acting in a separable Hilbert space for which the the radius equality (\ref{radreg})  is violated. On the positive side,
Ioffe and Sekiguchi \cite{IofSek09} showed that this equality  holds in infinite dimensions for certain classes of mappings with convex graphs, including  in particular semi-infinite inequality systems.

In another direction, the existing radius theorems   are quite general but cannot
be applied to situations where the perturbed mapping has  a specific form,
that is, in the case of {\em structured} perturbations; for an earlier work, see \cite{Pen05}.

For instance, there are apparently no radius theorems for the Karush-Kuhn-Tucker (KKT)
conditions in nonlinear programming, because the perturbed mapping there
ought to have the form corresponding to a KKT condition.  It is an open question
whether one might find a radius theorem, for various regularity properties,  even for the standard
nonlinear programming problem.

\section*{Acknowledgement}

The authors wish to thank the referees for their comments and suggestions.
\addcontentsline{toc}{section}{References}

\bibliographystyle{spmpsci}
\bibliography{buch-kr,kruger,kr-tmp}
\end{document}